\theoremstyle{theorem}
\theoremstyle{lemma}
\newtheorem{lemma}{Lemma}
\theoremstyle{definition}
\newtheorem{definition}{Definition}
\theoremstyle{assumption}
\newtheorem{assumption}{Assumption}
\theoremstyle{problem}
\newtheorem{problem}{Problem}
\theoremstyle{example}
\theoremstyle{proposition}
\newtheorem{proposition}{Proposition}
\newtheorem{remark}{Remark}
\newcommand{\fig}[1]{Figure~\ref{#1}}
\newcommand{\figsa}[2]{Fig.~\ref{#1} and Fig.~\ref{#2}}
\newcommand{\sect}[1]{Section~\ref{#1}}
\newcommand{\sects}[2]{Sections~\ref{#1} and~\ref{#2}}
\newcommand{\eq}[1]{Equation~(\ref{#1})}
\newcommand{\eqs}[2]{Equations~(\ref{#1})-(\ref{#2})}
\newcommand{\ass}[1]{Assumption~\ref{#1}}
\newcommand{\pro}[1]{Proposition~\ref{#1}}
\newcommand{\lem}[1]{Lemma~\ref{#1}}
\newcommand{\rem}[1]{Remark~\ref{#1}}
\newcommand{\fbm}[1]{\mathbf{#1}}
\newcommand{\tbm}[1]{\fbm{#1}^\mathsf{T}}
\newcommand{\tfbm}[1]{\bm{#1}^\mathsf{T}}
\newcommand{\hbm}[1]{\hat{\fbm{#1}}}
\newcommand{\hfbm}[1]{\hat{\bm{#1}}}
\newcommand{\thbm}[1]{\hat{\fbm{#1}}^\mathsf{T}}
\newcommand{\tilbm}[1]{\tilde{\fbm{#1}}}
\newcommand{\tilfbm}[1]{\tilde{\bm{#1}}}
\newcommand{\ttilfbm}[1]{\tilde{\bm{#1}}^\mathsf{T}}
\newcommand{\dottbm}[1]{\dot{\fbm{#1}}^\mathsf{T}}
\newcommand{\dotbm}[1]{\dot{\fbm{#1}}}
\newcommand{\dothatbm}[1]{\dot{\hat{\fbm{#1}}}}
\newcommand{\dothatfbm}[1]{\dot{\hat{\bm{#1}}}}
\newcommand{\dothattbm}[1]{\dot{\hat{\fbm{#1}}}^\mathsf{T}}
\newcommand{\dottilbm}[1]{\dot{\tilde{\fbm{#1}}}}
\newcommand{\dottilfbm}[1]{\dot{\tilde{\bm{#1}}}}
\newcommand{\dotttilbm}[1]{\dot{\tilde{\fbm{#1}}}^\mathsf{T}}
\newcommand{\ddotbm}[1]{\ddot{\fbm{#1}}}
\newcommand{\ddothbm}[1]{\ddot{\hbm{#1}}}
\newcommand{\dddothbm}[1]{\dddot{\hbm{#1}}}
\newcommand{\Sat}[0]{\text{Sat}}
\newcommand{\sat}[0]{\text{sat}}
\newcommand{\atan}[0]{\text{atan}}
\newcommand{\sign}[0]{\text{sign}}
\begin{document}

\title{Connectivity-Preserving Consensus of Multi-Agent Systems with Bounded Actuation}

\author{Yuan~Yang,~\IEEEmembership{Student Member,~IEEE,} Daniela Constantinescu,~\IEEEmembership{Member,~IEEE,} and Yang Shi,~\IEEEmembership{Fellow,~IEEE}\thanks{The authors are with the Department of Mechanical Engineering, University of Victoria, Victoria, BC V8W 2Y2 Canada (e-mail: yangyuan@uvic.ca; danielac@uvic.ca; yshi@uvic.ca).}}

\maketitle

\begin{abstract}
This paper investigates the impact of bounded actuation on the connectivity-preserving consensus of two classes of multi-agent systems, with kinematic agents and with Euler-Lagrange agents. The investigation establishes that: (1) there exists a class of gradient-based controls which drive kinematic multi-agent systems to connectivity-preserving consensus even if they saturate; (2) actuator saturation restricts the initial states from which Euler-Lagrange multi-agent systems can be synchronized while preserving their local connectivity; (3) Euler-Lagrange multi-agent systems with unbounded actuation can achieve connectivity-preserving consensus without velocity measurements or exact system dynamics; and (4) a proposed indirect coupling control strategy drives Euler-Lagrange multi-agent systems with limited actuation and starting from rest to connectivity-preserving consensus without requiring velocity measurements and including in the presence of uncertain dynamics and time-varying delays.
\end{abstract}

\IEEEpeerreviewmaketitle

\section{Introduction}\label{sec:introduction}

Distributed coordination control of multi-agent systems~(MAS-s) drives all agents to the same state using only local and 1-hop state signals~\cite{Ren2005TAC}. Established strategies include Static Proportional~(P) control for first-order MAS-s~\cite{Francis2004TAC}, Proportional-Derivative~(PD) control for second-order MAS-s~\cite{Ren2008TAC}, and Proportional plus damping~(P+d) control for Euler-Lagrange networks~\cite{Nuno2013TRO}. Because practical inter-agent communications are distance-dependent, the connectivity assumption of conventional strategies may be violated during coordination~\cite{Zavlanos2011Proceedings}.

For kinematic MAS-s with first-order or nonholonomic agents, consensus can be formulated as the minimization of a potential energy function of inter-agent distances that has a unique minimum at the consensus configuration. A negative gradient-based controller can then drive the MAS to consensus. If the potential function is quadratic in inter-agent distances, the negative gradient law is a form of P control. Distributed P-type controls that guarantee the connectivity and coordination of kinematic MAS-s can be derived from unbounded~\cite{Egerstedt2007TRO,Dimos2007TAC,Dimos2008TRO,Zavlanos2008TRO} or bounded~\cite{Amir2010TAC,Dimos2008ICRA,Dimos2010IET,Amir2013TAC,Wen2012IET,Dixon2015TCNS} potentials. Non-smooth gradient-based controls can guarantee finite-time consensus in the presence of disturbances~\cite{Dong2016Automatica} and Lipschitz nonlinearities~\cite{Ren2016TAC}. Other distributed gradient-based strategies provide connectivity in the presence of actuator saturation~\cite{Khorasani2013ACC} or of obstacles~\cite{Dixon2012TAC, Dimos2017TAC}, strong connectivity in directed graphs in the presence of disturbances~\cite{Spong2017TAC}, or intermittent connectivity~\cite{Hollinger2012TRO,Banfi2016ICRA,Zavlanos2017TAC}. Recent research investigates the robustness and invariance of connectivity preservation in the presence of additional control terms~\cite{Dimos2017SIAM}, and the trade-offs among bounded controls, connectivity maintenance and additional control objectives~\cite{Sabattini2017TRO}. Nonetheless, the effect of actuation bounds on the connectivity-preserving consensus of kinematic MAS-s is incompletely elucidated. This paper will identify a class of gradient-based controls that drive kinematic MAS-s to connectivity-preserving consensus even if saturated.

For MAS-s with second-order, including Euler-Lagrange, agents, consensus can be formulated as the minimization of an energy function with unique minimum at the consensus state and with two components: a potential energy function like that of kinematic MAS-s; and a kinetic energy function of agent velocities with unique minimum at the consensus velocity. A negative gradient plus damping injection strategy, like conventional PD and P+d control, can then drive the second-order MAS to consensus. In the absence of actuation constraints, distributed PD controls can also guarantee connectivity-preserving consensus for double-integrator MAS-s~\cite{Su2010SCL}. Robust gradient-based laws can maintain connectivity during the coordination to consensus of double-integrators with Lipschitz-like dynamic nonlinearities~\cite{Su2011Automatica}, and during leader-follower coordination of double-integrators~\cite{Dong2013Automatica,Dong2014TAC,Su2015Automatica,Hu2015IJRNC,Ai2016Automatica} and of Euler-Lagrange agents~\cite{Ren2012SCL,Dong2017IJRNC}. Integral terms added to sliding mode and conventional PD controllers can robustly preserve connectivity during rendezvouz~\cite{Hu2017TCNS}, flocking~\cite{Dong2015Automatica} and formation tracking~\cite{Hu2017ACC}. Decentralized algebraic connectivity estimation can preserve global connectivity in cooperative control of multi-robots~\cite{Sabattini2013IJRR,Paolo2013IJRR,Sabattini2013TRO,Sabattini2015Cybernetics,Sabattini2015AJC}. A question still open is whether actuation bounds thwart the connectivity-preserving consensus of second-order MAS-s. This paper will show that the answer depends on the initial state of the MAS. 

At the communications level, recurrent proximity maintenance~\cite{Hollinger2012TRO,Banfi2016ICRA,Zavlanos2017TAC}, switching graphs~\cite{Williams2013ICRA,Williams2013ICRA2,Williams2013TRO}, directed graphs~\cite{Spong2017TAC} and intermittent algebraic connectivity estimators~\cite{Williams2015ICRA,Williams2017TRO} tackle threats to connectivity due to limited agent communication ranges. Threats due to time-varying communication delays are considered only for attitude synchronization~\cite{Tayebi2012TAC}, for Euler-Lagrange MAS-s with uncertain parameters~\cite{Nuno2011TAC} and for Euler-Lagrange MAS-s without velocity measurements~\cite{Nuno2018TCST}. The dangers posed to the connectivity-preserving consensus of Euler-Lagrange MAS-s by combined communication delays and limited actuation are unclear. This paper will show how to overcome those combined dangers for Euler-Lagrange MAS-s that start from rest even if they have uncertain dynamics and only position measurements.

The paper contributes to research on connectivity-preserving consensus of MAS-s with bounded actuation as follows:
\begin{itemize}
\item First, by regarding saturated actuation as scaling of the planned controls, the paper proves that there exists a class of gradient-based strategies which can drive kinematic MAS-s to connectivity-preserving consensus even if the actuators saturate. These strategies are practically important because their design is unconstrained by the actuator design/selection. Unconstrained controller design can also improve system performance~\cite{Teel2004TRA}. Simulations in~\sect{sec: simulations} verify that unconstrained gradient-based control is simpler to design and can drive a kinematic MAS to connectivity-preserving consensus faster than the saturation-dependent strategy in~\cite{Khorasani2013ACC}.
\item Second, the paper shows that second-order MAS-s with bounded actuation cannot achieve connectivity-preserving consensus from some initial states. This conclusion arises from an intrinsic conflict between connectivity preservation and limited actuation in second-order MAS-s, illustrated through an exemplary $2$-agent system.
\item Third, the paper develops an output feedback controller and an adaptive controller to drive fully actuated Euler-Lagrange MAS-s to connectivity-preserving consensus using only position measurements and uncertain dynamics, respectively. The two controllers show that connectivity can be preserved from any initial state if the actuators do not saturate, by selecting either the coupling stiffness or the injected damping suitably large. They also indicate the need for a methodology to guarantee connectivity-preserving consensus subject to actuation bounds.
\item 
Fourth, the paper develops an indirect coupling framework that overcomes the conflict between bounded actuation and connectivity maintenance in Euler-Lagrange MAS-s which start from rest. The framework: introduces dynamic proxies for each agent; connects communicating agents through their proxies; treats the agent-proxy couplings subject to actuator saturation as in single robot regulation~\cite{Rio2007TRO}, where control gains can be freely tuned; and converts the actuation bounds into bounds on the stiffness of the inter-proxy couplings by minimizing the potential energy of agent-proxy couplings on the boundary of a ball. To the best knowledge of the authors, the indirect coupling framework in this paper is the first to handle connectivity preservation, time-varying delays, limited actuation and system uncertainties or lack of velocity measurements, simultaneously. Its key benefits are that: (1) it can constrain proxies tightly, because the virtual controls can be arbitrarily large; and (2) it enables free tuning of the agent-proxy couplings and, thus, better use of the bounded actuation. 
\end{itemize}

\section{Connectivity Preservation}\label{sec: connectivity preservation}

This section first presents the definitions and properties of, and the assumptions on, the MAS communications needed in the following sections. Then, it introduces a class of potential functions that generalizes prior potentials widely-used in connectivity-preserving consensus.

Consider a MAS with similar agents $i$, each with position $\fbm{q}_{i}\in\mathbb{R}^{n}$ and with limited communication capability $r$. Two agents $i$ and $j$ are adjacent, or neighbours, if and only if they (1) can, and (2) agree to, exchange information with each other. The two agents cannot exchange information and be adjacent if the distance between them is larger than, or equal to, their communication distance, i.e., $\|\fbm{q}_{ij}\|=\|\fbm{q}_{j}-\fbm{q}_{i}\| = \|\fbm{q}_{ji}\|=\|\fbm{q}_{i}-\fbm{q}_{j}\|\ge r$.
The agents can communicate when closer to each other than their communication distance, i.e.,  $\|\fbm{q}_{ij}\|=\|\fbm{q}_{ji}\|< r$, 
but they need to also agree to exchange information to be neighbours. Thus, the sets of agent neighbours need not change when coordination is achieved and all agents are within communication distance of all other agents.

The following definition of, and assumption on, the communication graph of the MAS are adopted in this paper.
\begin{definition}\label{def1}\cite{Egerstedt2010Princeton}
The communication graph of a MAS $\mathcal{G}=\{\mathcal{V},\mathcal{E}\}$ consists of a set of nodes $\mathcal{V}=\{1,\cdots,N\}$, each associated with one agent in the system, and a set of communication edges $\mathcal{E}=\{(i,j)\in \mathcal{V}\times\mathcal{V}|i\in\mathcal{N}_{j}\}$, each associated with a communication link in the system.
\end{definition}
\begin{assumption}\label{ass1}
The initial communication graph $\mathcal{G}(0)$ is undirected, i.e., $(i,j)\in\mathcal{E}(0)$ if and only if $(j,i)\in\mathcal{E}(0)$.
\end{assumption}

It follows from the above definition of neighbouring agents that the four statements below are equivalent:
\begin{enumerate}
\item[1.]
Agents $i$ and $j$ are neighbours of each other;
\item[2.]
Agents $i$ and $j$ belong the set of neighbours of each other, i.e., $i\in\mathcal{N}_{j}$ and $j\in\mathcal{N}_{i}$;
\item[3.]
The communication links~(edges of the MAS communication graph) $(i,j)$ and $(j,i)$ exist;
\item[4.]
Agents $i$ and $j$ are within communication distance of and agree to exchange information with each other.
\end{enumerate}

A path in the graph $\mathcal{G}$ is a sequence of connected edges $(i,j)$, $(j,k)$, $\cdots$. Further, the graph $\mathcal{G}$ is connected if and only if there exists a path between each pair of agents. The associated weighted adjacency matrix $\fbm{A}=[a_{ij}]$ of an undirected communication graph $\mathcal{G}$ is symmetric, with $a_{ij}>0$ if $(j,i)\in\mathcal{E}$, and $a_{ij}=0$ otherwise. Correspondingly, the weighted Laplacian matrix $\fbm{L}=[l_{ij}]$ of $\mathcal{G}$ is symmetric with
\begin{align*}
l_{ij}=\begin{cases}
\sum_{k\in\mathcal{N}_{i}}a_{ik}\quad &j=i\\
-a_{ij}\quad &j\neq i
\end{cases}\textrm{.}
\end{align*}
Let the undirected communication graph $\mathcal{G}$ contain $2M$ edges. Label only one of the edges $(i,j)$ and $(j,i)$ as $e_{k}$, $k=1,\cdots,M$, with weight $w(e_{k})=a_{ji}=a_{ij}$. For example, $e_{k}=(j,i)$ means agents $j$ and $i$ are the tail and the head of $e_{k}$, respectively. Then, the incidence matrix $\fbm{D}=[d_{hk}]$ of the graph $\mathcal{G}$ is defined by
\begin{align*}
d_{hk}=\begin{cases}
1\quad &\text{if agent}\ h\ \text{is the head of } e_{k}\textrm{,}\\
-1\quad&\text{if agent}\ h\ \text{is the tail of } e_{k}\textrm{.}\\
0 \quad&\text{otherwise}
\end{cases}
\end{align*}
It encodes edge orientation (from tail to head) and is related to the Laplacian $\fbm{L}$ of $\mathcal{G}$ through:
\begin{lemma}\label{lem1}\cite{Egerstedt2010Princeton}
Given an arbitrary orientation of the edge set $\mathcal{E}$, the weighted Laplacian matrix of the undirected communication graph $\mathcal{G}$ can be decomposed as
\begin{align*}
\fbm{L}=\fbm{D}\fbm{W}\tbm{D}\textrm{,}
\end{align*}
where $\fbm{W}$ is a $M\times M$ diagonal matrix with $w(e_{k})$, $k=1,\cdots,M$, on the diagonal. 
\end{lemma}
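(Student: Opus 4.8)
The plan is to establish the identity entrywise, comparing $(\fbm{D}\fbm{W}\tbm{D})_{ij}$ with $l_{ij}$ for every pair of agents $i,j$. Writing $\fbm{D}=[d_{im}]$ with node index $i\in\{1,\dots,N\}$ and edge index $m$ running over $1,\dots,M$, and $\fbm{W}=\mathrm{diag}(w(e_1),\dots,w(e_M))$, the generic entry of the triple product is
\begin{align*}
(\fbm{D}\fbm{W}\tbm{D})_{ij}=\sum_{m=1}^{M}d_{im}\,w(e_m)\,d_{jm}\textrm{.}
\end{align*}
I would then evaluate this sum separately in the diagonal case $i=j$ and the off-diagonal case $i\neq j$, and check in each case that it reproduces the corresponding entry of $\fbm{L}$ given in the Laplacian definition.

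For the diagonal, since $d_{im}\in\{-1,0,1\}$ we have $d_{im}^{2}=1$ exactly when agent $i$ is incident to edge $e_m$ (as either its head or its tail) and $d_{im}^{2}=0$ otherwise. Hence $(\fbm{D}\fbm{W}\tbm{D})_{ii}$ collapses to the sum of the weights of the edges incident to $i$, namely $\sum_{m\,:\,i\in e_m}w(e_m)$. Each such incident edge corresponds to a distinct neighbour $j\in\mathcal{N}_i$ and carries weight $w(e_m)=a_{ij}$, so the sum equals $\sum_{j\in\mathcal{N}_i}a_{ij}$, which is precisely $l_{ii}$.

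For the off-diagonal case $i\neq j$, the product $d_{im}d_{jm}$ is nonzero only for an edge $e_m$ incident to \emph{both} $i$ and $j$. Under the labeling convention that retains only one of the ordered pairs $(i,j)$ and $(j,i)$ as $e_m$, at most one such edge exists; when it does, one of the two agents is its head and the other its tail, so $d_{im}d_{jm}=(+1)(-1)=-1$ regardless of the chosen orientation. The off-diagonal entry is therefore $-w(e_m)=-a_{ij}$ when $i$ and $j$ are adjacent and $0$ otherwise, matching $l_{ij}$ for $j\neq i$. Collecting the two cases yields $\fbm{L}=\fbm{D}\fbm{W}\tbm{D}$, and the cancellation $(+1)(-1)=-1$ simultaneously shows that the decomposition does not depend on the orientation assigned to $\mathcal{E}$.

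The computation itself is routine; the only point demanding care — and the step I expect to be the main obstacle — is the bookkeeping tied to the orientation and to the edge-labeling convention, namely verifying that summing over the $M$ labelled edges $e_m$ (rather than over all $2M$ ordered pairs) reproduces each weight $a_{ij}$ exactly once on the diagonal and with the correct sign off the diagonal. Once the head/tail sign pattern is tracked consistently, the identity follows directly and no deeper structural argument is required.
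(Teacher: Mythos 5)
Your entrywise computation is correct: the diagonal case via $d_{im}^{2}=1$ for incident edges, the off-diagonal case via the head/tail sign product $(+1)(-1)=-1$ for the unique labelled edge joining $i$ and $j$, and the observation that both cases are orientation-independent together give exactly $\fbm{L}=\fbm{D}\fbm{W}\tbm{D}$. The paper itself supplies no proof --- Lemma~\ref{lem1} is quoted from \cite{Egerstedt2010Princeton} --- and your argument is the standard verification found there, so there is nothing to reconcile.
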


Another assumption used in this paper is:
\begin{assumption}\label{ass2}
The undirected communication graph of the MAS is initially connected and each pair of initially adjacent agents $(i,j)$ is strictly within their communication distance, i.e., $\|\fbm{q}_{ij}(0)\|=\|\fbm{q}_{ji}(0)\|\leq r-\epsilon$ for some $\epsilon>0$.
\end{assumption}

\begin{remark}
\normalfont\ass{ass2} is widely used in gradient-based connectivity-preserving consensus control~\cite{Egerstedt2007TRO,Dimos2007TAC,Dimos2008TRO,Zavlanos2008TRO,Amir2010TAC,Dimos2008ICRA,Dimos2010IET,Amir2013TAC,Wen2012IET,Dixon2015TCNS} to ensure that the potential function providing the controller is strictly smaller than its maximum intially. \normalfont\ass{ass2} is equivalent to assumption $\|\fbm{q}_{ij}(0)\|<r$ in~\cite{Amir2010TAC,Dimos2008ICRA,Dimos2010IET,Amir2013TAC,Wen2012IET,Dixon2015TCNS} if the controller design does not require $\epsilon$, like for example in kinematic MAS strategies based on the generalized potential in~\sect{sec: first-order}. However, the potential in~\cite{Amir2010TAC,Dimos2008ICRA,Dimos2010IET,Amir2013TAC,Wen2012IET,Dixon2015TCNS} requires sophisticated parameter selections, as shown in~\cite{Dixon2012TAC}. In contrast, the generalized potential in~\sect{sec: first-order}, whether bounded or unbounded, leads to simpler design and is applicable to kinematic MAS-s with both full and limited actuation. The mediated coupling strategy for Euler-Lagrange MAS-s in~\sect{sec: EL} uses $\epsilon$ to decompose actuation constraints and preserve connectivity.
\end{remark}

The objective of this paper is to drive MAS-s with kinematic and Euler-Lagrange agents with bounded actuation to consensus while preserving their initial connectivity. Connectivity preservation, i.e., $\mathcal{E}(t)=\mathcal{E}(0)$ $\forall t\ge 0$, requires all edges of the initial communication graph to be maintained, i.e., $(i,j)\in\mathcal{E}(0) \Rightarrow (i,j)\in\mathcal{E}(t)$ $\forall t\ge 0$. For MAS-s whose agents have the same communication radius $r$, connectivity preservation becomes the problem of keeping $d_{ij}(t)\leq r$ for all $t\geq 0$ and for all $(i,j)\in\mathcal{E}(0)$. Consensus can be formulated as the minimization of a potential function by driving the MAS with the corresponding negative gradient-based control law, as often done in existing work. Then, connectivity-preserving consensus can be formulated as the problem of bounding and simultaneously minimizing a suitable potential function. The remainder of this section will show that different potential functions which are widely adopted in existing connectivity-preserving coordination research are particular forms of a generalized potential. It will also prove that suitably bounding the generalized potential is equivalent to maintaining the initial connectivity and that minimizing it is equivalent to driving the MAS to consensus.

Consider a MAS with $n$ agents and the generalized potential function 
\begin{equation}\label{equ1}
\Psi(\fbm{q})=\frac{1}{2}\sum^{n}_{i=1}\sum_{j\in\mathcal{N}_{i}(0)}\psi(\|\fbm{q}_{ij}\|)\textrm{,}
\end{equation}
where $\fbm{q}=[\tbm{q}_{1},\cdots,\tbm{q}_{n}]^\mathsf{T}$, with $\fbm{q}_{i}$ the position of agent $i$; $\mathcal{N}_{i}(0)$ is the set of neighbours of agent $i$ at $t=0$; and $\psi(\|\fbm{q}_{ij}\|)$ and $\Psi(\fbm{q})$ obey:
\begin{enumerate}
\item[1.]
$\psi(\|\fbm{q}_{ij}\|)\ge 0$ and $\|\fbm{q}_{ij}\|=0 \Leftrightarrow \psi(\|\fbm{q}_{ij}\|)=0$;
\item[2.]
$\frac{\partial\psi(\|\fbm{q}_{ij}\|)}{\partial\|\fbm{q}_{ij}\|^{2}}$ is positive-definite and exists at every point $\|\fbm{q}_{ij}\|\in [0,r)$;
\item[3.]
$\Psi(\fbm{q})=\Psi_{max}$ when there exists $(i,j)\in\mathcal{E}(0)$ such that $\|\fbm{q}_{ij}(t)\|=r$, where $\Psi_{max}$ is not necessarily bounded.
\end{enumerate}

Note that the proposed potential function~\eqref{equ1} generalizes some widely-used potential functions, including the unbounded functions~\cite{Egerstedt2007TRO,Dimos2007TAC,Dimos2008TRO,Zavlanos2008TRO} and a typical bounded function~\cite{Su2010SCL,Khorasani2013ACC}. Consider, for example, the unbounded functions: 
\begin{equation}\label{equ2}
\psi^{u}(\|\fbm{q}_{ij}\|)=\frac{\|\fbm{q}_{ij}\|^{2}}{r^{2}-\|\fbm{q}_{ij}\|^{2}}
\end{equation}
and the bounded functions:
\begin{equation}\label{equ3}
\psi^{b}(\|\fbm{q}_{ij}\|)=\frac{\|\fbm{q}_{ij}\|^{2}}{r^{2}-\|\fbm{q}_{ij}\|^{2}+Q}
\end{equation}
with $Q>0$ to be determined. Both $\psi^{u}(\|\fbm{q}_{ij}\|)$ and $\psi^{b}(\|\fbm{q}_{ij}\|)$ are positive definite and increasing with respect to $\|\fbm{q}_{ij}\|$ on $[0,r)$, and are zero if $\|\fbm{q}_{ij}\|=0$. Thus, they satisfy the first two properties of~\eqref{equ1}. By \ass{ass2}, $0\leq\psi^{u}(\|\fbm{q}_{ij}(0)\|)<\infty$ and $0\leq\psi^{b}(\|\fbm{q}_{ij}(0)\|)<\frac{r^{2}}{Q}$. Suppose that $\|\fbm{q}_{ij}(t^{-})\|\in [0,r)$, i.e., $0\leq\psi^{u}(\|\fbm{q}_{ij}(t^{-})\|)<\infty$ and $0\leq\psi^{b}(\|\fbm{q}_{ij}(t^{-1})\|)<\frac{r^{2}}{Q}$. Then, $\|\fbm{q}_{ij}(t^{+})\|\in [0,r)$ if $0\leq\psi^{u}(\|\fbm{q}_{ij}(t^{+})\|)<\infty$, or if $0\leq\psi^{b}(\|\fbm{q}_{ij}(t^{+})\|)<\frac{r^{2}}{Q}$. Let the maximum of $\Psi$ be $\Psi_{max}=\infty$ for $\psi^{u}(\|\fbm{q}_{ij}\|)$ and $\Psi_{max}=\frac{r^{2}}{Q}$ for $\psi^{b}(\|\fbm{q}_{ij}\|)$, respectively. By continuity of $\Psi(\fbm{q})$ on $[0,r)$, $\Psi(\fbm{q})<\Psi_{max}$ is sufficient for $\|\fbm{q}_{ij}\|<r$, i.e., for $\psi^{u}(\|\fbm{q}_{ij}\|)<\infty$ or $\psi^{b}(\|\fbm{q}_{ij}\|)<\frac{r^{2}}{Q}$ for each edge $(i,j)\in\mathcal{E}(0)$, which corresponds to the third property of~\eqref{equ1}. Hence, the prior unbounded~\cite{Egerstedt2007TRO,Dimos2007TAC,Dimos2008TRO,Zavlanos2008TRO} and bounded~\cite{Su2010SCL,Khorasani2013ACC} potential functions are particular versions of the general potential function \eqref{equ1}.

Another function used for connectivity-preserving consensus in~\cite{Amir2010TAC,Dimos2008ICRA,Dimos2010IET,Amir2013TAC,Wen2012IET,Dixon2015TCNS} is the navigation function:
\begin{equation}\label{equ4}
\Psi_{i}=\frac{\gamma_{i}}{(\gamma^{k}_{i}+\beta_{i})^{\frac{1}{k}}}\textrm{,}
\end{equation}
for agent $i$, with $k>0$ a constant and
\begin{align*}
\gamma_{i}=\sum_{j\in\mathcal{N}_{i}(0)}\frac{1}{2}\|\fbm{q}_{ij}\|^{2}\ \text{and}\ \beta_{i}=\prod_{j\in\mathcal{N}_{i}(0)}\frac{1}{2}(r^{2}-\|\fbm{q}_{ij}\|^{2})\textrm{.}
\end{align*}
Each $\Psi_{i}$ is positive definite, zero if $\|\fbm{q}_{ij}\|=0$ $\forall j\in\mathcal{N}_{i}(0)$, increases with respect to $\|\fbm{q}_{ij}\|$ on $[0,r)$ and achieves its maximum $\Psi_{max}=1$ when $\beta_{i}=0$, i.e., when there exists $(i,j)\in\mathcal{E}(0)$ such that $\|\fbm{q}_{ij}\|=r$. Hence, $\Psi_{i}(t)<\Psi_{max} \Rightarrow \|\fbm{q}_{ij}(t)\|<r$ $\forall (i,j)\in\mathcal{E}(0)$, and agent control laws based on $\Psi_{i}$ instead of \eqref{equ1} are used to maintain the initial connectivity. However, the analysis of the impact of velocity saturation on controls based on~\eqref{equ1} presented in~\sect{sec: first-order} also applies to controls based on $\Psi_{i}$.

The following proposition states the use of the generalized potential function~\eqref{equ1} in connectivity-preserving consensus:
\begin{proposition}\label{pro1}
Given an initially connected MAS with undirected communication edges $\mathcal{E}(0)$ and with potential function $\Psi(0)<\Psi_{max}$ defined in \eqref{equ1}: (i) all initial communication edges are preserved, i.e., $(i,j)\in\mathcal{E}(0)\Rightarrow (i,j)\in\mathcal{E}(t)$ $\forall t\geq 0$, if and only if $0\leq \Psi(t)<\Psi_{max}$ $\forall t\geq 0$; and (ii) all agents converge to the same configuration, i.e., $\fbm{q}_{i}\to\fbm{c}$, if and only if $\Psi(t)\to 0$ as $t\to \infty$.
\end{proposition}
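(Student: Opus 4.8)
The plan is to establish the two equivalences separately. Part~(i) is a barrier statement relating the sublevel set $\{\Psi < \Psi_{max}\}$ to the set on which all initial edges lie below $r$, and I would prove it from Property~3 together with continuity of the agent trajectories. Part~(ii) is a convergence statement, which I would prove from the positive-definiteness of Property~1, the monotonicity of $\psi$ implied by Property~2, and the initial connectivity of Assumption~2. The unifying structural fact is that $\Psi$ is a sum of nonnegative terms $\psi(\|\fbm{q}_{ij}\|)$ over the \emph{fixed} edge set $\mathcal{E}(0)$, so each claim reduces to edgewise reasoning plus a single graph-level step.

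For part~(i), the forward implication is a first-exit argument. Assuming $0 \leq \Psi(t) < \Psi_{max}$ for all $t \geq 0$, suppose for contradiction that some initial edge $(i,j)$ breaks; since $\|\fbm{q}_{ij}(0)\| \leq r-\epsilon < r$ by Assumption~2 and $\fbm{q}_{ij}(\cdot)$ is continuous, there is a first instant $t^\ast$ with $\|\fbm{q}_{ij}(t^\ast)\| = r$, whence Property~3 gives $\Psi(t^\ast) = \Psi_{max}$, a contradiction; hence every edge of $\mathcal{E}(0)$ survives. For the converse, each $\psi(\|\fbm{q}_{ij}\|)$ is finite on $[0,r)$, so in the unbounded case ($\Psi_{max} = \infty$) finiteness of $\Psi$ is immediate, while in general I would read Property~3 as characterizing $\Psi_{max}$ as the boundary level and deduce $\Psi(t) < \Psi_{max}$ from $\Psi(0) < \Psi_{max}$, continuity of $\Psi$, and the absence of any edge at distance $r$.

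For part~(ii), I would argue both directions edgewise and then globalize. For ($\Leftarrow$), if $\Psi(t) \to 0$ then, since all summands are nonnegative, $\psi(\|\fbm{q}_{ij}\|) \to 0$ for each $(i,j) \in \mathcal{E}(0)$; Property~1 together with the strict monotonicity and continuity from Property~2 then force $\|\fbm{q}_{ij}\| \to 0$ along every initial edge. The globalization step invokes Assumption~2: for any two agents $i$ and $k$, choose an initial path $i = v_0, v_1, \ldots, v_m = k$ with each $(v_\ell, v_{\ell+1}) \in \mathcal{E}(0)$, so that the triangle inequality gives $\|\fbm{q}_i - \fbm{q}_k\| \leq \sum_{\ell=0}^{m-1} \|\fbm{q}_{v_{\ell+1}} - \fbm{q}_{v_\ell}\| \to 0$; hence all pairwise distances vanish and the agents reach a common configuration $\fbm{c}$. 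The direction ($\Rightarrow$) is the easy converse: $\fbm{q}_i \to \fbm{c}$ yields $\|\fbm{q}_{ij}\| \to 0$ for every pair, and continuity of $\psi$ with $\psi(0) = 0$ gives $\Psi(t) \to 0$.

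The step I expect to be the main obstacle is the globalization in the ($\Leftarrow$) direction of part~(ii): converting decay of the \emph{initial} edges into decay of \emph{all} inter-agent distances. This is the only place the global graph structure enters, and it is exactly where initial connectivity (Assumption~2) is indispensable, through the path-plus-triangle-inequality argument. Two smaller points deserve care: the implication $\psi(\|\fbm{q}_{ij}\|) \to 0 \Rightarrow \|\fbm{q}_{ij}\| \to 0$, which needs Property~2 to rule out $\psi$ vanishing away from the origin; and the converse half of part~(i) when $\Psi_{max}$ is finite (the bounded potentials), where a purely static reading of Property~3 is too weak and one must instead exploit that the designed control keeps $\Psi$ nonincreasing, so that $\Psi(t) \leq \Psi(0) < \Psi_{max}$. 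Finally, I would interpret $\fbm{q}_i \to \fbm{c}$ as the vanishing of all relative positions, which is precisely what $\Psi \to 0$ certifies.
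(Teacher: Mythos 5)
Your proposal is correct and follows essentially the same route as the paper's proof: a continuity/first-crossing contradiction for part~(i) and edgewise nonnegativity of the summands $\psi(\|\fbm{q}_{ij}\|)$ for part~(ii). You are in fact slightly more careful than the paper in two places --- the path-plus-triangle-inequality globalization from convergence of the initial edges to convergence of all agents to a common $\fbm{c}$, and the observation that the direction ``edges preserved $\Rightarrow \Psi<\Psi_{max}$'' requires more than a static reading of Property~3 when $\Psi_{max}$ is finite --- both of which the paper's proof leaves implicit.
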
 
\begin{proof}
\item[(i)]
Let $(i,j)\in\mathcal{E}(t)$ $\forall (i,j)\in\mathcal{E}(0)$. Then, by the first and third properties of~\eqref{equ1}, all $\psi(\|\fbm{q}_{ij}(t)\|)\geq 0$ and $\Psi(t)<\Psi_{max}$. Hence, $0\leq\Psi(t)\le \Psi_{max}$ $\forall t\ge 0$.

\noindent Conversely, let $0\leq\Psi(t)<\Psi_{max}$, $\forall t\geq 0$, and assume there exists $(i,j)\in\mathcal{E}(0)$ such that $(i,j)\notin\mathcal{E}(t_{2})$ at time instant $t_{2}$, i.e., $\|\fbm{q}_{ij}\|\geq r$. Because $\fbm{q}_{ij}$ is continuous in $t$, there exists $0<t_{1}\leq t_{2}$ such that $\|\fbm{q}_{ij}(t_{1})\|=r$. Then, by the third property of \eqref{equ1}, $\Psi(t_{1})=\Psi_{max}$, which contradicts $\Psi(t)<\Psi_{max}$ for any $t\geq 0$.
\item[(ii)]
Let all agents converge to the same configuration, i.e., $\fbm{q}_{i}\to\fbm{c}$ $\forall i$. Then, $\forall (i,j)\in\mathcal{E}(0)$, $\|\fbm{q}_{ij}\|\to 0$ and $\psi(\|\fbm{q}_{ij}\|)\to 0$ by the first property of \eqref{equ1}, and further $\Psi(t)\to 0$ as $t\to \infty$.\\
Conversely, let $\Psi(t)\to 0$ as $t\to \infty$ and assume $\exists (i,j)\in\mathcal{E}(0)$ such that $\fbm{q}_{i}-\fbm{q}_{j}\nrightarrow\fbm{0}$. By the first property of \eqref{equ1}, it follows that $\exists (i,j)\in \mathcal{E}(0)$ such that $\psi(\|\fbm{q}_{ij}\|)\nrightarrow 0$ and, from~\eqref{equ1}, that $\Psi(t)\nrightarrow 0$, which contradicts $\Psi(t)\to 0$ as $t\to \infty$.\\
\end{proof}

\pro{pro1} shows that connectivity-preserving consensus can be formulated as the following consensus problem with inter-agent distance constraints:
\begin{problem}
Given a MAS with bounded actuation and satisfying \ass{ass2}, find a distributed control law such that $d_{ij}(t)<r$ $\forall t\geq 0$ and $d_{ij}(t)\to 0$ as $t\to \infty$ $\forall (i,j)\in\mathcal{E}(0)$.
\end{problem}

\begin{remark}
\normalfont In general, $d_{ij}(t)\to 0$ as $t\to \infty$ does not imply that $d_{ij}(t)<r$ $\forall t\geq 0$, i.e., consensus is a more general problem than connectivity-preserving consensus. 
\end{remark} 

\section{Kinematic Networks}\label{sec: first-order}

This section shows that kinematic MAS-s can be driven to connectivity-preserving consensus with gradient-based controls derived from the potential function~\eqref{equ1} whether actuator saturation prevents their full application to the system or not.

\subsection{Single-Integrator Systems}\label{sec: single-integrator}
Consider a MAS with $N$ single-integrator agents:
\begin{equation}\label{equ5}
\dotbm{q}_{i}=\fbm{u}_{i}\textrm{,}
\end{equation}
where $i=1,\cdots,N$ indexes the agents; and $\fbm{q}_{i}=\begin{pmatrix}q_{i1},\cdots,q_{in}\end{pmatrix}^\mathsf{T}\in\mathbb{R}^{n}$ and $\fbm{u}_{i}=\begin{pmatrix}u_{i1},\cdots,u_{in}\end{pmatrix}^\mathsf{T}\in\mathbb{R}^{n}$ are the position and the the actual control of agent $i$, respectively.

The potential function~\eqref{equ1} provides the following control:
\begin{equation}\label{equ6}
\hbm{u}_{i}=-\sum_{j\in\mathcal{N}_{i}(0)}\nabla_{i}\psi(\|\fbm{q}_{ij}\|)\textrm{,}
\end{equation}
where $\nabla_{i}\psi(\|\fbm{q}_{ij}\|)$ is the gradient of $\psi(\|\fbm{q}_{ij}\|)$ with respect to $\fbm{q}_{i}$. By the second property of~\eqref{equ1}, $\psi(\|\fbm{q}_{ij}\|)$ is differentiable with respect to $\|\fbm{q}_{ij}\|^{2}$ and
\begin{equation}\label{equ7}
\begin{aligned}
\nabla_{i}\psi(\|\fbm{q}_{ij}\|)=\frac{2\partial\psi(\|\fbm{q}_{ij}\|)}{\partial\|\fbm{q}_{ij}\|^{2}}(\fbm{q}_{i}-\fbm{q}_{j})\textrm{,}
\end{aligned}
\end{equation}
with $\frac{\partial\psi(\|\fbm{q}_{ij}\|)}{\partial\|\fbm{q}_{ij}\|^{2}}$ is positive definite. Hence, the control~\eqref{equ6} is a type of nonlinear P control with state-dependent gains.

The actual actuation $\fbm{u}_{i}$ applied to agent $i$ is equal to the designed control $\hbm{u}_{i}$ if $\hbm{u}_{i}$ is within the agent's actuation bound; otherwise, the actuator saturates and applies only part of $\hbm{u}_{i}$:
\begin{equation}\label{equ8}
\fbm{u}_{i}=\Sat_{i}(\hbm{u}_{i})=\begin{pmatrix}\sat_{i1}(\hat{u}_{i1}) & \cdots &\sat_{in}(\hat{u}_{in})\end{pmatrix}^\mathsf{T}\textrm{,}
\end{equation}
where $\hat{u}_{ik}$ is the $k$-th element of $\hbm{u}_{i}$ and $\sat_{ik}(\cdot)$ is the standard saturation function, a widely used model for actuator saturation~\cite{Rio2007TRO}.

This paper regards the hard physical constraints imposed on the designed control by the saturation of the actuators as automatic scaling through time-varying positive gains:
\begin{equation}\label{equ9}
\fbm{u}_{i}=\Sat_{i}(\hbm{u}_{i})=\fbm{S}_{i}(t)\hbm{u}_{i}\textrm{,}
\end{equation}
where $\fbm{S}_{i}(t)=\text{diag}\{s_{i1}(t),\cdots,s_{in}(t)\}, i=1,\cdots,N$ are diagonal matrices with time-varying diagonal elements $s_{ik}(t)>0$ for all $k=1,\cdots,n$. This dynamic scaling model of saturation $s_{ik}(t)$  is related to the standard model of saturation $\sat(\hat{u}_{ik})$. The standard model is $\sat(\hat{u})=\hat{u}$ if $\underline{u}\leq\hat{u}\leq\overline{u}$; $\sat(\hat{u})=\overline{u}$ if $\hat{u}>\overline{u}$; and $\sat(\hat{u})=\underline{u}$ otherwise. An equivalent dynamic scaling model is $\sat(\hat{u})=s(t)\hat{u}$, where: $s(t)=1$ if $\underline{u}\leq\hat{u}\leq\overline{u}$; $s(t)=\frac{\overline{u}}{\hat{u}}$ if $\hat{u}>\overline{u}$; and $s(t)=\frac{\underline{u}}{\hat{u}}$ otherwise. Thus, the actuator saturation model in this paper is similar to~\cite{Rio2007TRO}. Hence, the current design inherits the merits of the design~\cite{Rio2007TRO}.

\begin{remark}
\normalfont \eq{equ9} is key to how this paper accounts for actuation bounds in connectivity-preserving consensus of kinematic MAS-s. The dynamic scaling matrices $\fbm{S}_{i}(t)$ are not an artificial construction but a mathematical model of saturation, and the design in this paper places no constraints on the control other than the specified actuation bounds, unlike the control in~\cite{Khorasani2013ACC}. Therefore, it verifies the claim in~\cite{Teel2004TRA,Rio2007TRO} that control designs free of artificial constraints improve performance, as shown through simulations in~\sect{sec: simulations}. 
\end{remark}

By \pro{pro1}, the connectivity-preserving coordination of \eqref{equ5} under the control~\eqref{equ9} can be evaluated using $V(t)=\Psi(\fbm{q})$. To this end, the derivative of $V(t)$ can be computed by:
\begin{align*}
&\dot{V}(t)=\frac{1}{2}\sum^{N}_{i=1}\sum_{j\in\mathcal{N}_{i}(0)}\dot{\psi}(\|\fbm{q}_{ij}\|)\\
=&\frac{1}{2}\sum^{N}_{i=1}\sum_{j\in\mathcal{N}_{i}(0)}\dottbm{q}_{i}\nabla_{i}\psi(\|\fbm{q}_{ij}\|)+\frac{1}{2}\sum^{N}_{i=1}\sum_{j\in\mathcal{N}_{i}(0)}\dottbm{q}_{j}\nabla_{j}\psi(\|\fbm{q}_{ij}\|)\\
=&\frac{1}{2}\sum^{N}_{i=1}\sum_{j\in\mathcal{N}_{i}(0)}\dottbm{q}_{i}\nabla_{i}\psi(\|\fbm{q}_{ij}\|)+\frac{1}{2}\sum^{N}_{j=1}\sum_{i\in\mathcal{N}_{j}(0)}\dottbm{q}_{i}\nabla_{i}\psi(\|\fbm{q}_{ij}\|)\\
=&\frac{1}{2}\sum^{N}_{i=1}\sum_{j\in\mathcal{N}_{i}(0)}\dottbm{q}_{i}\nabla_{i}\psi(\|\fbm{q}_{ij}\|)+\frac{1}{2}\sum^{N}_{i=1}\sum_{j\in\mathcal{N}_{i}(0)}\dottbm{q}_{i}\nabla_{i}\psi(\|\fbm{q}_{ij}\|)\\
=&\sum^{N}_{i=1}\sum_{j\in\mathcal{N}_{i}(0)}\dottbm{q}_{i}\nabla_{i}\psi(\|\fbm{q}_{ij}\|)\textrm{,}
\end{align*}
where the third equality follows from $\psi(\|\fbm{q}_{ij}\|)=\psi(\|\fbm{q}_{ji}\|)$, and the fourth from~\ass{ass1}. Using \eqref{equ5}, \eqref{equ6} and \eqref{equ9}, $\dot{V}(t)$ can be rearranged into:
\begin{equation}\label{equ10}
\begin{aligned}
\dot{V}=-\sum^{N}_{i=1}\thbm{u}_{i}\fbm{S}_{i}(t)\hbm{u}_{i}\textrm{.}
\end{aligned}
\end{equation}
Because $\fbm{S}_{i}(t)$ are positive definite, $\dot{V}(t)\leq 0$ and $V(t)\leq V(0)$, $\forall t\geq 0$. By \ass{ass2} and \pro{pro1}, $V(0)<\Psi_{max}$. Hence, $\Psi(\fbm{q})\leq V(0)<\Psi_{max}$ and, by the third property of~\eqref{equ1}, all initial communication links are maintained.

By the first two properties of~\eqref{equ1} and the Lasalle Invariance Principle, the MAS trajectories tend to the largest invariant set in $\mathcal{I}=\{\fbm{q}:\ \dot{V}=0\}$. The above proven connectivity preservation implies that $\|\fbm{q}_{ij}\|<r$ for all $j\in\mathcal{N}_{i}(0)$ and $t\ge 0$. From \eq{equ7}, it follows that $\nabla_{i}\psi(\|\fbm{x}_{ij}\|)$ are bounded, and thus that $\hbm{u}_{i}$ are bounded. Bounded $\hbm{u}_{i}$ imply that the dynamic scaling factors $s_{ik}(t)$ are lower bounded by some positive constants. Then, \eq{equ10} leads to $\mathcal{I}=\{\fbm{q}:\ \sum_{j\in\mathcal{N}_{i}(0)}\nabla_{i}\psi(\|\fbm{q}_{ij}\|)=\fbm{0}\textrm{,}\ i=1,\cdots,N\}$. \eq{equ7} together with $\fbm{c}_{k}=\begin{pmatrix}q_{1k} & \cdots & q_{Nk}\end{pmatrix}^\mathsf{T}$, $k=1,\cdots,n$ permit to rewrite $\sum_{j\in\mathcal{N}_{i}}\nabla_{i}\psi(\|\fbm{q}_{ij}\|)\to\fbm{0}$ for $i=1,\cdots,N$ as $\fbm{L}(\fbm{q})\fbm{c}_{k}\to\fbm{0}$ for all $k=1,\cdots,n$, where $\fbm{L}(\fbm{q})=[l_{ij}(\fbm{q})]$ with
\begin{align*}
l_{ij}(\fbm{q})=\begin{cases}
-\sum_{k\in\mathcal{N}_{i}(0)}l_{ik}(\fbm{q})\quad &j=i\textrm{,}\\
-\frac{2\partial\psi(\|\fbm{q}_{ij}\|)}{\partial\|\fbm{q}_{ij}\|^{2}}\quad &j\neq i\ \text{and}\ j\in\mathcal{N}_{i}(0)\textrm{,}\\
0\quad &j\neq i\ \text{and}\ j\notin\mathcal{N}_{i}(0)\textrm{.}
\end{cases}
\end{align*}
The state-dependent $\fbm{L}(\fbm{q})$ is the weighted Laplacian of the undirected communication graph of the single-integrator MAS, which, given Lemma~\ref{lem1} and the guaranteed preservation of initial communication links, can be decomposed as $\fbm{L}(\fbm{q})=\fbm{D}\fbm{W}(\fbm{q})\tbm{D}$, where the diagonal matrix  $\fbm{W}(\fbm{q})$ has $w(e_{k})=a_{ij}=\frac{2\partial\psi(\|\fbm{q}_{ij}\|)}{\partial\|\fbm{q}_{ij}\|^{2}}$ on its diagonal. Further, $\fbm{L}(\fbm{q})\fbm{c}_{k}\to\fbm{0}$ leads to $\tbm{c}_{k}\fbm{L}(\fbm{q})\fbm{c}_{k}=\left(\tbm{D}\fbm{c}_{k}\right)^\mathsf{T}\fbm{W}(\fbm{q})\left(\tbm{D}\fbm{c}_{k}\right)\to 0$, $k=1,\cdots,n$. Because $\fbm{W}(\fbm{q})$ is positive definite, it follows that $\tbm{D}\fbm{c}_{k}\to\fbm{0}$ and that $q_{ik}-q_{jk}\to 0$ for each pair of neighbouring agents $(i,j)\in\mathcal{E}(0)$. Then, the guaranteed preservation of the initially connected communication graph yields that $q_{1k}\to q_{2k}\to\cdots\to q_{Nk}$ for $k=1,\cdots,n$, or $\fbm{q}_{1}\to\cdots\to\fbm{q}_{N}$ and coordination is achieved. 

\begin{remark}
\normalfont The proof above leads to an important conclusion: velocity saturation is no threat to the connectivity-preserving consensus of single-integrator MAS-s under conventional negative gradient-based control derived from the generalized potential function~\eqref{equ1}. Critical to the proof is the dynamic scaling model of actuator saturation in~\eqref{equ9} because it enables the quadratic decomposition in~\eqref{equ10}. Connectivity preservation makes $\fbm{S}_{i}(t)$ positive definite, and the Lasalle Invariance Principle together with the decomposition~\eqref{equ10} and the positive definiteness of $\fbm{S}_{i}(t)$ guarantee the coordination of the first-order MAS. The practical significance of the conclusion is that controllers based on the generalized potential function~\eqref{equ1} are unconstrained by the actuator design/selection and need no modifications to account for actuator saturation. Controllers unconstrained by actuator saturation exploit the system actuation better and converge faster, as observed for single robot control in~\cite{Teel2004TRA,Rio2007TRO} and verified for single-integrator MAS-s through simulation comparison to the controller~\cite{Khorasani2013ACC} in~\sect{sec: simulations}. 
\end{remark}

\subsection{Nonholonomic Systems}\label{sec: nonholonomic} 

Let a MAS have $N$ nonholonomic agents with dynamics:
\begin{equation}\label{equ11}
\begin{aligned}
\dot{x}_{i}=&v_{i}\cos(\theta_{i})\\
\dot{y}_{i}=&v_{i}\sin(\theta_{i}) \qquad i=1,\cdots,N\\
\dot{\theta}_{i}=&\omega_{i}
\end{aligned}
\end{equation}
where $\fbm{q}_{i}=\begin{pmatrix}x_{i}, y_{i}\end{pmatrix}^\mathsf{T}\in\mathbb{R}^{2}$ and $\theta_{i}\in (-\pi, \pi]$ are the position and orientation of agent $i$ in a global coordinate frame. 

The negative gradient-based control derived from the generalized potential function~\eqref{equ1} for the nonholonomic MAS is:
\begin{equation}\label{equ12}
\begin{aligned}
\hat{v}_{i}=&\|\hat{\fbm{u}}_{i}\|\cos(\tilde{\theta}_{i})\\
\hat{\omega}_{i}=&-k\tilde{\theta}_{i}
\end{aligned}
\end{equation}
where: $\hat{\fbm{u}}_{i}=\begin{pmatrix}\hat{u}_{ix}, \hat{u}_{iy}\end{pmatrix}^\mathsf{T}$ is defined in~\eqref{equ6}; $\tilde{\theta}_{i}=\theta_{i}-\hat{\theta}_{i}$ with $\hat{\theta}_{i}=\atan2 (\hat{u}_{iy}, \hat{u}_{ix})$; and $k$ is any constant positive gain.

Due to actuator saturation, the controls actually applied to agent $i$ and the controls designed in~\eqref{equ12} are related through:
\begin{equation}\label{equ13}
\begin{aligned}
v_{i}=&\sat_{vi}(\hat{v}_{i})=s_{vi}(t)\hat{v}_{i}\textrm{,}\\
\omega_{i}=&\sat_{\omega i}(\hat{\omega}_{i})=s_{\omega i}(t)\hat{\omega}_{i}\textrm{,}
\end{aligned}
\end{equation}
where $\sat_{vi}(\cdot)$ and $\sat_{\omega i}(\cdot)$ are standard saturation functions, and $s_{vi}(t)>0$ and $s_{\omega}(t)>0$ are time-varying gains which this paper regards as dynamic scalings of the designed controls.

\begin{remark}
\normalfont The static P controller for the agent orientation in~\eqref{equ12} indicates the alignment between the orientation and the designed linear velocity $\hat{v}_{i}$ of the agent.

In~\eqref{equ12}, the orientation control of each agent $i$ is a simple P control, which means that each agent's orientation is aligned with its designed linear velocity $\hat{v}_{i}$. The position control of agent $i$ is the projection of the potential function's negative gradient on its orientation $\theta_{i}$. This guarantees that each agent moves in a direction decreasing the potential function. Since the dynamic scaling factors $s_{vi}(t)$ and $s_{\omega i}(t)$ are positive, actual orientation and position actuations of agent $i$ continue to align its orientation in the direction of $\hat{v}_{i}$ and to decrease the potential function.
\end{remark}


From \eqs{equ11}{equ13}, the linear velocity of agent $i$ is:
\begin{equation}\label{equ14}
\begin{aligned}
\dotbm{q}_{i}=&\begin{bmatrix}\dot{x}_{i}\\ \dot{y}_{i}\end{bmatrix}=v_{i}\begin{bmatrix}\cos(\theta_{i})\\ \sin(\theta_{i})\end{bmatrix}=s_{vi}(t)\hat{v}_{i}\begin{bmatrix}\cos(\theta_{i})\\ \sin(\theta_{i})\end{bmatrix}\\
=&s_{vi}(t)\|\hat{\fbm{u}}_{i}\|\cos(\tilde{\theta}_{i})\begin{bmatrix}\cos(\theta_{i})\\ \sin(\theta_{i})\end{bmatrix}\\
=&\frac{1}{2}s_{vi}(t)\|\hat{\fbm{u}}_{i}\|\begin{bmatrix}\cos(2\theta_{i}-\hat{\theta}_{i})+\cos(\hat{\theta}_{i})\\ \sin(2\theta_{i}-\hat{\theta}_{i})+\sin(\hat{\theta}_{i})\end{bmatrix}\\
=&\frac{1}{2}s_{vi}(t)\|\hat{\fbm{u}}_{i}\|\left(\text{Rot}(2\tilde{\theta}_{i})+\fbm{I}_{2}\right)\begin{bmatrix}\cos(\hat{\theta}_{i})\\ \sin(\hat{\theta}_{i})\end{bmatrix}\textrm{,}
\end{aligned}
\end{equation}
where:
\begin{align*}
\text{Rot}(2\tilde{\theta}_{i})=\begin{bmatrix}\cos(2\tilde{\theta}_{i})&-\sin(2\tilde{\theta}_{i})\\ \sin(2\tilde{\theta}_{i})&\cos(2\tilde{\theta}_{i})\end{bmatrix}\textrm{.}
\end{align*}

Connectivity preservation in the nonholonomic MAS with the bounded control~\eqref{equ13} is investigated using the same $V(t)=\Psi(\fbm{q})$ as in the single-integrator MAS, with derivative:
\begin{equation}\label{equ15}
\begin{aligned}
&\dot{V}(t)=\sum^{N}_{i=1}\sum_{j\in\mathcal{N}_{i}(0)}\dottbm{q}_{i}\nabla_{i}\psi(\|\fbm{q}_{ij}\|)=-\sum^{N}_{i=1}\thbm{u}_{i}\dotbm{q}_{i}\\
&=-\frac{1}{2}\sum^{N}_{i=1}s_{vi}(t)\|\hat{\fbm{u}}_{i}\|^{2}\begin{bmatrix}\cos(\hat{\theta}_{i})\\ \sin(\hat{\theta}_{i})\end{bmatrix}^\mathsf{T}(\text{Rot}(2\tilde{\theta}_{i})+\fbm{I}_{2})
\begin{bmatrix}\cos(\hat{\theta}_{i})\\ \sin(\hat{\theta}_{i})\end{bmatrix}\\
&=-\frac{1}{2}\sum^{N}_{i=1}s_{vi}(t)\|\hat{\fbm{u}}_{i}\|^{2}\left(1+\cos(2\tilde{\theta}_{i})\right)\leq 0\textrm{.}
\end{aligned} 
\end{equation}
\eq{equ15} indicates that $V$ decreases monotonically along the system trajectories, in particular $V(t)\leq V(0)$ $\forall t\ge 0$. \ass{ass2} and \pro{pro1} lead to $V(0)<\Psi_{max}$ and further $\Psi(\fbm{q})\leq V(0)<\Psi_{max}$. Thus, the controller~\eqref{equ12} preserves the initial connectivity of the nonholonomic MAS. 

The first two properties of $\Psi(\fbm{q})$ and the Lasalle Invariance Principle imply that the system trajectories converge to the largest invariant set in $\mathcal{I}=\{\fbm{p}: \dot{V}=0\}$, where $\fbm{p}=\begin{bmatrix}\tbm{p}_{1}, \cdots, \tbm{p}_{N}\end{bmatrix}^\mathsf{T}$ and $\fbm{p}_{i}=\begin{bmatrix}\tbm{q}_{i}, \theta_{i}\end{bmatrix}^\mathsf{T}$, $i=1,\cdots,N$. Preservation of initial connectivity, i.e.,  $\|\fbm{q}_{ij}\|<r$ for all $(i,j)\in \mathcal{E}(0)$, and~\eqref{equ6} imply that $\hat{\fbm{u}}_{i}$ are bounded, and further that $s_{vi}(t)$ are lower-bounded by some positive constants for $i=1,\cdots,N$. Then, $\dot{V}(t)\leq 0$ in~\eqref{equ15} leads to $\mathcal{I}=\left\{\fbm{p}: \|\hat{\fbm{u}}_{i}\|^{2}\left(1+\cos(2\tilde{\theta}_{i})\right)=0,\ i=1,\cdots,N\right\}$. 

A proof by contradiction can now show that convergence to the invariant set $\mathcal{I}$ implies $\lim\limits_{t\to\infty}\hbm{u}_{i}=0$. Convergence to a set in $\mathcal{I}$ together with \eqref{equ12} imply that $\hat{v}_{i}=\|\hat{\fbm{u}}_{i}\|\cos(\tilde{\theta}_{i})\to 0$, and thus $\dotbm{q}_{i}=s_{vi}(t)\hat{v}_{i}\begin{bmatrix}\cos(\theta)\ \sin(\theta)\end{bmatrix}^\mathsf{T}\to\fbm{0}$. Now assume that $\tilde{\theta}_{i}\to\pm\frac{\pi}{2}$ and $\|\hat{\fbm{u}}_{i}\|\nrightarrow 0$. Then, the derivative of $\hat{\theta}_{i}$ is:
\begin{equation}\label{equ16}
\begin{aligned}
\dot{\theta}^{\ast}_{i}=&\frac{\dothattbm{u}_{i}}{\|\hat{\fbm{u}}_{i}\|}\begin{bmatrix}-\sin(\hat{\theta}_{i})\\ \cos(\hat{\theta}_{i})\end{bmatrix}=-\frac{1}{\|\hat{\fbm{u}}_{i}\|}\sum_{j\in\mathcal{N}_{i}}\Big[\nabla^{2}_{i}\psi(\|\fbm{q}_{ij}\|)\dotbm{q}_{i}\\
&\ \ \ +\nabla_{j}\nabla_{i}\psi(\|\fbm{q}_{ij}\|)\dotbm{q}_{j}\Big]^\mathsf{T}\begin{bmatrix}-\sin(\hat{\theta}_{i})\\ \cos(\hat{\theta}_{i})\end{bmatrix}\to 0\textrm{.}
\end{aligned}
\end{equation}
and $\hat{\omega}_{i}=-k\tilde{\theta}_{i}\to\mp\frac{k\pi}{2}$, so 
\begin{equation}\label{equ17}
\dot{\theta}_{i}=\omega_{i}=\sat_{\omega i}(\hat{\omega}_{i})\to-\text{sgn}(\tilde{\theta}_{i})\min\{\overline{s}_{\omega i}, k|\tilde{\theta}_{i}|\}\neq 0\textrm{.}
\end{equation} 
\eqs{equ16}{equ17} imply that $\dot{\tilde{\theta}}_{i}=\dot{\theta}_{i}-\dot{\theta}^{\ast}_{i}\nrightarrow 0$, which contradicts the assumption that $\tilde{\theta}_{i}\to\pm\frac{\pi}{2}$. Therefore, the largest invariant set to which the trajectories of the nonholonomic MAS converge must be contained in $\mathcal{I}_{u}=\{\fbm{p}: \|\hat{\fbm{u}}_{i}\|=0, i=1,\cdots,N\}$, and thus $\lim\limits_{t\to\infty}\hat{\fbm{u}}_{i}=-\lim\limits_{t\to\infty}\sum_{j\in\mathcal{N}_{i}(0)}\nabla_{i}\psi(\|\fbm{q}_{ij}\|)=\fbm{0}$ for $i=1,\cdots,N$. Lastly, the guaranteed connectivity of the communication graph together with an analysis similar to the one in~\sect{sec: single-integrator} lead to the conclusion that $\fbm{q}_{1}\to\cdots\to\fbm{q}_{N}$, i.e, that position coordination is guaranteed.

\begin{remark}
\normalfont The proof above leads to a similar conclusion as for single-integrator MAS-s: actuator saturation does not threaten the connectivity-preserving consensus of nonholonomic MAS-s under conventional negative gradient-based control derived from the generalized potential function~\eqref{equ1}. Key to the proof is that the factors $s_{vi}(t)$, which scale the designed control in~\eqref{equ15}, are positively lower-bounded and, thus, maintain the monotonic decrease of $V(t)$. As discussed in \sect{sec: single-integrator}, these scaling factors are not introduced artificially, through control design. They are an exact model of the actuator saturation intrinsic to the MAS. Hence, the connectivity-preserving control law is designed in~\eqref{equ12}, unconstrained by actuator saturation, and the actual velocities are~\eqref{equ13} due to limited actuation. Simulations in \sect{sec: simulations} verify that the P control aligns the orientation of all agents with their designed linear velocity regardless of velocity saturation, similar to~\cite{Rio2007TRO}.
\end{remark}

\subsection{Dynamic Graphs}\label{sec: dynamic}

\sects{sec: single-integrator}{sec: nonholonomic} have proven that the gradient-based control laws \eqref{equ6} and \eqref{equ12}, even when saturated due to limited actuations~(\eqref{equ8} and~\eqref{equ13}), guarantee both connectivity preservation for, and coordination of, single-integrator and nonholonomic MAS, respectively. Guaranteed coordination implies that all agents move within communication distance of all other agents after some time, for any initial connectivity of the MAS. Consider a $5$-agent MAS, whose communication graph is a cycle $C_{5}$ at $t=0$. At some time $t>0$, each agent becomes sufficiently close to all other agents for the communications graph of the MAS to potentially become a complete graph $K_{5}$.
It then seems reasonable that the MAS could be coordinated faster if agents would become adjacent to new agents as they move within communication distance of each other, i.e., if new communication links $\mathcal{E}(K_{5})-\mathcal{E}(C_{5})$ would be established and the sensing graph would become dynamic. This section modifies the gradient-based controllers~\eqref{equ6} and \eqref{equ12} to exploit such additional communication while preserving the initial connectivity and coordinating the kinematic MAS.

The redesign involves two steps. The first step accounts for a dynamic sensing graph $\mathcal{G}(t)$ by replacing the static set $\mathcal{N}_{i}(0)$ of neighbours of agent $i$ with a dynamic set $\mathcal{N}_{i}(t)$ of neighbours of agent $i$. The second step preserves the initial connectivity through the following hysteresis mechanism: two agents who were not adjacent at $t=0$ become neighbours only at a time $t$ when they are within $r-\epsilon$ distance of each other, i.e., if $j\notin\mathcal{N}_{i}(t^{-})$ and $\|\fbm{x}_{ij}(t)\|< r-\epsilon$, then $j\in\mathcal{N}_{i}(t)$.

Now let $t_1$ be the time instant when the first edge is added to the communication graph. The same assumptions and similar analysis as in \sects{sec: single-integrator}{sec: nonholonomic} lead to the conclusion that $V=\Psi(\fbm{q})$ decreases monotonically during $[0,t_{1}]$, $V(t_{1})\leq V(0)$. Therefore, no existing graph edge is broken at $t_1$. Given the finite number of agents $N$ in the MAS, it follows by induction that $V(t_{k})\leq V(0), \forall t\in[0, t_{k}]$ for all $k=1, \cdots, k_{max}$, where $k_{max}$ is the maximum number of edges added to the initial communication graph $\mathcal{G}(0)$. Because the existing edges are preserved when a new edge is established, $\mathcal{G}(t)$ becomes a complete graph at $t_{k_{max}}$. Thereafter, the proofs in \sects{sec: single-integrator}{sec: nonholonomic} guarantee that the proposed controllers preserve the connectivity and coordinate the MAS whether they saturate or not.

\section{Networked Euler-Lagrange Systems}\label{sec: EL}
This section starts by recalling the properties of Euler-Lagrange dynamics. Afterwards, an exemplary $2$-agent system exposes the intrinsic conflict between connectivity preservation and bounded actuation in second-order MAS-s, and demonstrates that actuator saturation prevents these systems from achieving connectivity-preserving consensus from arbitrary initial state. Then, the section proves that full actuation is sufficient to drive an Euler-Lagrange MAS to connectivity-preserving consensus from any initial state using only position measurements and uncertain dynamics. Lastly, the section develops an indirect coupling control framework based on the generalized potential function~\eqref{equ1}. This framework yields controllers that drive Euler-Lagrange MAS-s which start from rest to connectivity-preserving consensus even if the MAS-s have bounded actuation, system uncertainties, only position measurements and time-varying communication delays.

Let a MAS have $N$ non-redundant Euler-Lagrange agents with dynamics:
\begin{equation}\label{equ18}
\fbm{M}_{i}(\fbm{q}_{i})\ddotbm{q}_{i}+\fbm{C}_{i}(\fbm{q}_{i},\dotbm{q}_{i})\dotbm{q}_{i}+\fbm{g}_{i}(\fbm{q}_{i})=\fbm{f}_{i}\textrm{.}
\end{equation}
In~\eq{equ18}: the subscript $i=1,\cdots,N$ indexes the agent; $\fbm{q}_{i}$, $\dotbm{q}_{i}$ and $\ddotbm{q}_{i}$ are its position, velocity and acceleration; $\fbm{M}_{i}(\fbm{q}_{i})$ and $\fbm{C}_{i}(\fbm{q}_{i},\dotbm{q}_{i})$ are its matrices of inertia and of Coriolis and centrifugal effects, respectively; $\fbm{g}_{i}(\fbm{q}_{i})$ is its force of gravity; and $\fbm{f}_{i}$ is the control force applied to the agent.  

The dynamics in~\eqref{equ18} have the following properties~\cite{Kelly2006Springer}:
\begin{enumerate}[label=P.\arabic*]
\item \label{P1}
The inertia matrix $\fbm{M}_{i}(\fbm{q}_{i})$ is symmetric, positive definite and uniformly bounded by $\fbm{0}\prec\lambda_{i1}\fbm{I}\preceq \fbm{M}_{i}(\fbm{q}_{i})\preceq \lambda_{i2}\fbm{I}\prec\infty$, with $\lambda_{i1}>0, \lambda_{i2}>0$.
\item \label{P2}
The matrix $\dot{\fbm{M}}_{i}(\fbm{q}_{i})-2\fbm{C}_{i}(\fbm{q}_{i},\dotbm{q}_{i})$ is skew-symmetric.
\item \label{P3}
There exists $c_{i}>0$ such that $\|\fbm{C}_{i}(\fbm{q}_{i},\dotbm{q}_{i})\fbm{y}\|\leq c_{i}\|\dotbm{q}_{i}\|\|\fbm{y}\|$, $\forall \fbm{q}_{i}\textrm{,} \dotbm{q}_{i}\textrm{,} \fbm{y}$.
\item \label{P4}
The dynamics~\eqref{equ18} admit a linear parameterization of the form: $\fbm{M}_{i}(\fbm{q}_{i})\ddotbm{x}_{ri}+\fbm{C}_{i}(\fbm{q}_{i},\dotbm{q}_{i})\dotbm{x}_{ri}+\fbm{g}_{i}(\fbm{q}_{i})=\bm{\Phi}_{i}(\fbm{q}_{i},\dotbm{q}_{i},\dotbm{x}_{ri},\ddotbm{x}_{ri})\bm{\theta}_{i}$, where $\bm{\Phi}_{i}(\fbm{q}_{i},\dotbm{q}_{i},\dotbm{x}_{ri},\ddotbm{x}_{ri})$ is a regressor matrix of known functions and $\underline{\bm{\theta}}_{i}\leq\bm{\theta}_{i}\leq\overline{\bm{\theta}}_{i}$ is a constant vector containing system parameters. 
\end{enumerate} 
For simplicity of notation, matrix and vector dependencies on $\fbm{q}_{i}$ and $\dotbm{q}_{i}$ are omitted in the remainder of this paper, for example, $\fbm{C}_{i}$ and $\fbm{g}_{i}$ indicate $\fbm{C}_{i}(\fbm{q}_{i},\dotbm{q}_{i})$ and $\fbm{g}_{i}(\fbm{q}_{i})$, respectively.

\subsection{Bounded Actuation - Connectivity Preservation Conflict}\label{sec: conflict}

This section uses a proof by contradiction to show that second-order MAS-s with limited actuation cannot achieve connectivity-preserving consensus from certain initial state, i.e., to show that Problem~\ref{pro1} is generally infeasible for second-order MAS-s.

To this end, assume that Problem~\ref{pro1} is feasible, i.e., that any second-order MAS with bounded actuation can be driven to consensus from any initial state while preserving its initial connectivity. Let the $2$-agent MAS have: dynamics $\ddot{x}_{i}=f_{i}$, $i=1,2$, communication radius $r>0$, and maximum actuations $\bar{f}_1$ and $\bar{f}_2$; and initial state such that the agents are a distance $d_{12}(0)=x_{2}(0)-x_{1}(0)=r-\epsilon$ apart for some $0<\epsilon<r$, and move away from each other with relative velocity $\dot{x}_{2}(0)-\dot{x}_{1}(0)=\sqrt{3(r-d_{12}(0))(\bar{f}_{1}+\bar{f}_{2})}$. Let the MAS have any controller. To maintain connectivity, the controller should stop the increase of $d_{12}(t)$, i.e., it should drive the relative velocity $\dot{x}_{2}(t)-\dot{x}_{1}(t)$ to zero, while $d_{12}(t)<r$. Due to the limited actuation, the fastest rate at which any controller can decrease $\dot{x}_{2}(t)-\dot{x}_{1}(t)$ is $-\bar{f}_{1}-\bar{f}_{2}$. Direct calculation lead to $\dot{x}_{2}(t)-\dot{x}_{1}(t)=\sqrt{(r-d_{12}(0))(\bar{f}_{1}+\bar{f}_{2})}>0$ at the time instant when $d_{12}(t)=r$. Therefore, no controller can stop the two agents from moving apart while they are in communication distance of each other. In other words, no controller can maintain the connectivity of the $2$-agent MAS and drive it to consensus. This is a contradiction with the hypothesis. Hence, Problem~\ref{pro1} is generally infeasible for second-order MAS-s.

\subsection{Velocity Estimation}\label{sec: output}

This section shows that a gradient-based control law derived from either an unbounded or a bounded generalized potential function~\eqref{equ1} can drive a fully actuated Euler-Lagrange MAS-s with only position sensing to connectivity-preserving consensus from any initial state. 

Let such a MAS have a first-order filter to estimate velocities and the following output feedback coordinating controller:
\begin{equation}\label{equ19}
\begin{aligned}
\fbm{f}_{i}=&-c_{i}\sum_{j\in\mathcal{N}_{i}(0)}\nabla_{i}\psi(\|\fbm{q}_{ij}\|)-\kappa_{i}\dothatbm{q}_{i}+\fbm{g}_{i}\textrm{,}\\
\dothatbm{q}_{i}=&-\hbm{q}_{i}+\kappa_{i}\fbm{q}_{i}\textrm{,}
\end{aligned}
\end{equation}
where: $\fbm{f}_{i}$ and $\dothatbm{q}_{i}$ are the designed control force and the estimated velocity of agent $i$, respectively; and $c_{i}$ and $\kappa_{i}$ are positive constants.

The following Lyapunov candidate function serves the study of the Euler-Lagrange MAS~\eqref{equ18} under the control~\eqref{equ19}:
\begin{equation}\label{equ20}
V=\frac{1}{2}\sum^{N}_{i=1}\left[\frac{1}{c_{i}}\dottbm{q}_{i}\fbm{M}_{i}\dotbm{q}_{i}+\frac{1}{c_{i}}\dothattbm{q}_{i}\dothatbm{q}_{i}+\sum_{j\in\mathcal{N}_{i}(0)}\psi(\|\fbm{q}_{ij}\|)\right] \textrm{.}
\end{equation}
The derivative of $V$ can be computed using property~\ref{P2} and the derivative of the filter dynamics: 
\begin{equation}\label{equ21}
\begin{aligned}
\dot{V}=&\frac{1}{2}\sum^{N}_{i=1}\sum_{j\in\mathcal{N}_{i}(0)}\dot{\psi}(\|\fbm{q}_{ij}\|)-\sum^{N}_{i=1}\dottbm{q}_{i}\sum_{j\in\mathcal{N}_{i}(0)}\nabla_{i}\psi(\|\fbm{q}_{ij}\|)\\
&-\sum^{N}_{i=1}\frac{\kappa_{i}}{c_{i}}\dottbm{q}_{i}\dothatbm{q}_{i}-\frac{1}{c_{i}}\sum^{N}_{i=1}\dothattbm{q}_{i}\dothatbm{q}_{i}+\sum^{N}_{i=1}\frac{\kappa_{i}}{c_{i}}\dothatbm{q}_{i}\dotbm{q}_{i}\textrm{.}
\end{aligned}
\end{equation}
From \ass{ass1} and $\psi(\|\fbm{q}_{ij}\|)=\psi(\|\fbm{q}_{ji}\|)$, it follows that:
\begin{align*}
\frac{1}{2}\sum^{N}_{i=1}\sum_{j\in\mathcal{N}_{i}(0)}\dot{\psi}(\|\fbm{q}_{ij}\|)=\sum^{N}_{i=1}\dottbm{q}_{i}\sum_{j\in\mathcal{N}_{i}(0)}\nabla_{i}\psi(\|\fbm{q}_{ij}\|),
\end{align*}
which, together with~\eqref{equ21}, leads to:
\begin{equation}\label{equ22}
\dot{V}=-\frac{1}{c_{i}}\sum^{N}_{i=1}\dothattbm{q}_{i}\dothatbm{q}_{i}\leq 0\textrm{.}
\end{equation}
From $\dottbm{q}_{i}\fbm{M}_{i}\dotbm{q}_{i}\geq 0$ and $\dothattbm{q}_{i}\dothatbm{q}_{i}\geq 0$, it follows that $V\ge \Psi(\fbm{q})$, where $\Psi(\fbm{q})$ is the generalized potential function in~\eqref{equ1}. If $V(0)\leq \Psi_{max}$, then~\eqref{equ22} implies that $\Psi(t)\leq V(t)\leq V(0)\leq\Psi_{max} $ for any $t\geq 0$ and, by the third property of $\Psi(\fbm{q})$, that the initial MAS connectivity is maintained.

The above analysis shows that a $V$ that obeys $V(0)<\Psi_{max}$ is sufficient for connectivity preservation. \ass{ass2} guarantees $\Psi(\fbm{q}(0))<\Psi_{max}$. Because $\dothatbm{q}_{i}$ are designed dynamics, $\dothatbm{q}_{i}(0)$ and, with them, the second sum in $V(0)$ can be set zero by choosing $\hbm{q}_{i}(0)=\kappa_{i}\fbm{q}_{i}(0)$. The first sum in $V(0)$, however, depends on the initial kinetic energy of the MAS. Therefore, for fixed $c_{i}$, the condition $V(0)<\Psi_{max}$ is infeasible for arbitrarily large initial velocities if $\Psi_{max}$ is finite, and is automatically satisfied if $\Psi_{max}$ is infinite. If $c_{i}$ and, with them, the controls $\fbm{f}_{i}$ can be arbitrarily large, the first sum in $V(0)$ can be reduced arbitrarily and $V(0)<\Psi_{max}$ can be guaranteed for any bounded $\Psi(\fbm{q})$ and any initial velocities. Thus, either selecting $\Psi(\fbm{q})$ unbounded or increasing the gradient-based controls~\eqref{equ19} derived from a bounded $\Psi(\fbm{q})$ guarantees $V(0)<\Psi_{max}$ and preserves the initial connectivity of a fully actuated Euler-Lagrange MAS with only position measurements. 

\begin{remark}
\normalfont The first and the third properties of the generalized potential~\eqref{equ1} lead to $\psi(\|\fbm{q}_{ij}\|)\to \Psi_{max}=\infty$ as $\|\fbm{q}_{ij}\|\to r$ if $\Psi(\fbm{q})$ is unbounded. Then, the second property of~\eqref{equ1} implies that the partial derivatives $\frac{\partial\psi(\|\fbm{q}_{ij}\|)}{\partial\|\fbm{q}_{ij}\|^{2}}$ are unbounded.
Unbounded $\frac{\partial\psi(\|\fbm{q}_{ij}\|)}{\partial\|\fbm{q}_{ij}\|^{2}}$ imply continuously increasing and unbounded controls as $\|\fbm{q}_{ij}\|\to r$. Hence, both an unbounded $\Psi(\fbm{q})$ and suitably enlarged gradients of a bounded $\Psi(\fbm{q})$ in~\eqref{equ19} act similarly: they increase the controls continuously to sufficiently large attractive forces when initial communication links $(i,j)\in\mathcal{E}(0)$ are threatened.  
\end{remark}

The analysis of coordination uses Barbalat's lemma and follows the conventional synchronization analysis of Euler-Lagrange networks. It is omitted here to save space.

\subsection{System Uncertainties}\label{sec: adaptive}

For fully actuated Euler-Lagrange MAS-s with uncertain parameters, this section overcomes the inability to compensate gravity terms directly by designing the following linear filter-based adaptive control law:
\begin{equation}\label{equ23}
\begin{aligned}
\fbm{f}_{i}=&\bm{\Phi}_{i}(\fbm{q}_{i},\dotbm{q}_{i},\fbm{e}_{i},\dotbm{e}_{i})\hfbm{\theta}_{i}-\mu_{i}\fbm{s}_{i}-\kappa_{i}\dotbm{q}_{i}\textrm{,}\\
\dothatfbm{\theta}_{i}=&-\tfbm{\Phi}_{i}(\fbm{q}_{i},\dotbm{q}_{i},\fbm{e}_{i},\dotbm{e}_{i})\fbm{s}_{i}\textrm{,}
\end{aligned}
\end{equation}
where: $i=1,\cdots,N$; $\mu_{i}$ and $\kappa_{i}$ are positive constants; $\fbm{s}_{i}=\dotbm{q}_{i}+\alpha\fbm{e}_{i}$ with $\alpha>0$ and $\fbm{e}_{i}=\sum_{j\in\mathcal{N}_{i}(0)}\nabla_{i}\psi(\|\fbm{q}_{ij}\|)$; and $\bm{\Phi}_{i}(\fbm{q}_{i},\dotbm{q}_{i},\fbm{e}_{i},\dotbm{e}_{i})\hfbm{\theta}_{i}=\hbm{M}_{i}(\fbm{q}_{i})(-\alpha\dotbm{e}_{i})+\hbm{C}_{i}(\fbm{q}_{i},\dotbm{q}_{i})(-\alpha\fbm{e}_{i})+\hbm{g}_{i}(\fbm{q}_{i})$. 

Adding $-\bm{\Phi}_{i}(\fbm{q}_{i},\dotbm{q}_{i},\fbm{e}_{i},\dotbm{e}_{i})\bm{\theta}_{i}$ on both sides of~\eqref{equ18} and using~\eqref{equ23} and property~\ref{P4} lead to the closed-loop dynamics:
\begin{equation}\label{equ24}
\begin{aligned}
\fbm{M}_{i}(\fbm{q}_{i})\dotbm{s}_{i}+\fbm{C}_{i}(\fbm{q}_{i},\dotbm{q}_{i})\fbm{s}_{i}=\fbm{f}^{*}_{i}\textrm{,}
\end{aligned}
\end{equation}
with $\fbm{f}^{*}_{i}=-\bm{\Phi}_{i}(\fbm{q}_{i},\dotbm{q}_{i},\fbm{e}_{i},\dotbm{e}_{i})\tilfbm{\theta}_{i}-\mu_{i}\fbm{s}_{i}-\kappa_{i}\dotbm{q}_{i}$ and $\tilfbm{\theta}_{i}=\bm{\theta}_{i}-\hfbm{\theta}_{i}$. Hence, the auxiliary variables $\fbm{s}_{i}$ in~\eqref{equ23} transform the closed-loop agent dynamics from~\eqref{equ18} to~\eqref{equ24}. The dynamics~\eqref{equ24} inherit the passivity of~\eqref{equ18} because $\dotbm{M}_{i}(\fbm{q}_{i})-2\fbm{C}_{i}(\fbm{q}_{i},\dotbm{q}_{i})$ are skew-symmetric. 

Algebraic manipulation using $\dotbm{q}_{i}=-\alpha\fbm{e}_{i}+\fbm{s}_{i}$ and $\fbm{e}_{i}=\sum_{j\in\mathcal{N}_{i}(0)}\nabla_{i}\psi(\|\fbm{q}_{ij}\|)$ leads to:
\begin{equation}\label{equ25}
\dotbm{q}=-\alpha\left(\fbm{L}(\fbm{q})\otimes\fbm{I}_{n}\right)\fbm{q}+\fbm{s}\textrm{,}
\end{equation}
where $\fbm{q}=\begin{pmatrix}\tbm{q}_{1},\cdots,\tbm{q}_{N}\end{pmatrix}^\mathsf{T}$ and $\fbm{s}=\begin{pmatrix}\tbm{s}_{1},\cdots,\tbm{s}_{N}\end{pmatrix}^\mathsf{T}$ stack the configuration and the auxiliary variables, respectively, and $\fbm{L}(\fbm{q})$ has been defined in~\sect{sec: single-integrator}.

\begin{remark}
\normalfont The control law~\eqref{equ23} converts the closed-loop agent dynamics~\eqref{equ18} into the cascaded interconnection of the passive dynamics~\eqref{equ24} and~\eqref{equ25}.
The output $\fbm{s}$ of the dynamics~\eqref{equ24} is input to the dynamics~\eqref{equ25}. Because the dynamics~\eqref{equ25} are input-to-state stable with input $\fbm{s}$, making $\fbm{s}\to\fbm{0}$ through the control of~\eqref{equ24} is sufficient to guarantee the coordination of the Euler-Lagrange MAS.
\end{remark}

The Lyapunov candidate function, used to analyze the connectivity-preserving consensus of Euler-Lagrange MAS-s with uncertain dynamics in this section, is:
\begin{equation}\label{equ26}
V=\frac{1}{2}\sum^{N}_{i=1}\left[\frac{1}{\alpha\kappa_{i}}\left(\tbm{s}_{i}\fbm{M}_{i}\fbm{s}_{i}+\ttilfbm{\theta}_{i}\tilfbm{\theta}_{i}\right)+\sum_{j\in\mathcal{N}_{i}(0)}\psi(\|\fbm{q}_{ij}\|)\right]\textrm{.}
\end{equation}

After using~\eqref{equ23} and~\eqref{equ24} and the definitions of $\fbm{s}_{i}$ and $\fbm{e}_{i}$, the derivative of $V$ can be written in the form:
\begin{align*}
\dot{V}=&\sum^{N}_{i=1}\Big(\frac{1}{\alpha\kappa_{i}}\ttilfbm{\theta}_{i}\left(-\tfbm{\Phi}_{i}(\fbm{q}_{i},\dotbm{q}_{i},\fbm{e}_{i},\dotbm{e}_{i})\fbm{s}_{i}-\dothatfbm{\theta}_{i}\right)-\frac{1}{\alpha}\dottbm{q}_{i}\dotbm{q}_{i}\\
&-\frac{\mu_{i}}{\alpha\kappa_{i}}\tbm{s}_{i}\fbm{s}_{i}-\dottbm{q}_{i}\fbm{e}_{i}\Big)+\sum^{N}_{i=1}\dottbm{q}_{i}\sum_{j\in\mathcal{N}_{i}(0)}\nabla_{i}\psi(\|\fbm{q}_{ij}\|)\\
=&-\frac{1}{\alpha}\sum^{N}_{i=1}\left(\frac{\mu_{i}}{\kappa_{i}}\tbm{s}_{i}\fbm{s}_{i}+\dottbm{q}_{i}\dotbm{q}_{i}\right)\leq 0\textrm{.}
\end{align*}
Because $V\geq\Psi(\fbm{q})$, the third property of $\Psi(\fbm{q})$ together with $\dot{V}\leq 0$ imply that $\Psi(t)\leq V(t)\leq V(0)$ and that the initial connectivity is preserved if $V(0)<\Psi_{max}$. 

\ass{ass2} implies that $\Psi(0)<\Psi_{max}$. If $\Psi(\fbm{q})$ is unbounded, then the condition that $V(0)<\Psi_{max}=\infty$ is obviously guaranteed. If $\Psi(\fbm{q})$ is bounded, then $V(0)<\Psi_{max}$ can be guaranteed by rendering $\frac{1}{\alpha\kappa_{i}}\left(\tbm{s}_{i}\fbm{M}_{i}\fbm{s}_{i}+\ttilfbm{\theta}_{i}\tilfbm{\theta}_{i}\right)$ sufficiently small at $t=0$. Because $\tbm{s}_{i}\fbm{M}_{i}\fbm{s}_{i}+\ttilfbm{\theta}_{i}\tilfbm{\theta}_{i}$ depend on the initial state of the MAS, $\alpha\kappa_{i}$ should be chosen sufficiently large to make $V(0)<\Psi_{max}$. Tuning $\alpha$ is not straightforward because the auxiliary variables $\fbm{s}_{i}=\dotbm{q}_{i}+\alpha\fbm{e}_{i}$ depend on it. Instead, it is simpler to select $\kappa_{i}$ sufficiently large to guarantee $V(0)<\Psi_{max}$ because both $\fbm{s}_{i}$ and $\tilfbm{\theta}_{i}$ are independent of $\kappa_{i}$ .

\begin{remark}
\normalfont By the analysis above, the distributed control~\eqref{equ23} preserves the connectivity of a fully actuated Euler-Lagrange MAS with uncertain dynamics whether the gradient-based terms $\fbm{e}_{i}=\sum_{j\in\mathcal{N}_{i}(0)}\nabla_{i}\psi(\|\fbm{q}_{ij}\|)$ in the auxiliary variables $\fbm{s}_{i}$ derived from an unbounded or from a bounded generalized potential function~\eqref{equ1}. An unbounded potential function generates unbounded gradient-based terms $-\mu_{i}\fbm{s}_{i}$ in the controls $\fbm{f}^{*}_{i}$. Thus, it stiffens indefinitely the couplings between neighbouring agents with endangered communications. Sufficiently large $\kappa_{i}$ inject enough damping $-\kappa_{i}\dotbm{q}_{i}$ to stop neighbouring agents from moving away from each other while they are still in their communication distance.
\end{remark}

Further, the derivative of $V$ leads to the conclusions that $\fbm{s}_{i}\in\mathcal{L}_{2}\cap\mathcal{L}_{\infty}$ and $\tilfbm{\theta}_{i}\in\mathcal{L}_{\infty}$, which, together with the system dynamics, lead to $\dotbm{s}_{i}\in\mathcal{L}_{\infty}$ and, thus, to $\fbm{s}_{i}\to\fbm{0}$ as $t\to\infty$. Then, the analysis similar to~\cite{Nuno2011TAC} leads to the conclusion that $\fbm{q}_{1}\to\cdots\to\fbm{q}_{N}$ and coordination is achieved.

The analysis up to here has proven that: (i) no controller can drive an Euler-Lagrange MAS with bounded actuation to connectivity-preserving consensus from any initial state; and (ii) a gradient-based controller derived from the potential function~\eqref{equ1} can drive an Euler-Lagrange MAS with full actuation to connectivity-preserving consensus from any initial state, whether the potential is bounded or unbounded. The following analysis develops a framework to overcome the intrinsic conflict between actuator saturation and connectivity maintenance for Euler-Lagrange MAS-s which start from rest.

\subsection{Actuator Saturation}\label{sec: saturations}

This section proves that an indirect coupling framework based on the generalized potential function~\eqref{equ1} drives to connectivity-preserving consensus Euler-Lagrange MAS-s that have bounded actuation and start from rest. 

The proposed indirect coupling framework is designed as the control: 
\begin{equation}\label{equ27}
\begin{aligned}
\hbm{f}_{i}=&\Sat^{p}_{i}\left(p_{i}(\hbm{q}_{i}-\fbm{q}_{i})\right)+\fbm{g}_{i}\textrm{,}\\
\ddothbm{q}_{i}=&\Sat^{p}_{i}\left(p_{i}(\fbm{q}_{i}-\hbm{q}_{i})\right)-\sum_{j\in\mathcal{N}_{i}(0)}\nabla_{i}\psi(\|\hbm{q}_{ij}\|)-k_{i}\dothatbm{q}_{i}\textrm{,}
\end{aligned}
\end{equation}
where: $i=1,\cdots,N$; $\hbm{f}_{i}$ is the designed control of agent $i$ of the MAS in~\eqref{equ18}; $\hbm{q}_{i}$ is the position of proxy $i^{'}$ of agent $i$; $\tilbm{q}_{i}=\fbm{q}_{i}-\hbm{q}_{i}$ is the position mismatch between agent $i$ and its proxy $i^{'}$; $\hbm{q}_{ij}=\hbm{q}_{i}-\hbm{q}_{j}$ is the displacement between proxies $i^{'}$ and $j^{'}$;  as in~\sect{sec: single-integrator}, $\Sat^{p}_{i}(\cdot)$ is the standard saturation function, with bounds to be determined; and $k_{i}$ and $p_{i}$ are positive constants. Thus, in the proposed framework, agent $i$ is coupled to its proxy $i^{'}$ through saturated P control, which accounts for the physical limits of the MAS actuators. Proxy $i^{'}$ is coupled to agent $i$ and to the proxies $j^{'}$ of all agents $j$ adjacent to agent $i$. The inter-proxy couplings are gradient-based control terms derived from the potential function~\eqref{equ1}, and can be unbounded and arbitrarily stiff because the proxies are virtual dynamics introduced through design.

\begin{assumption}\label{ass3}
The actuators can more than balance gravity throughout the workspace. That is, there exist $\bm{\gamma}_{i}$,~$i=1,...,N$, such that $|g^{k}_{i}|\leq \gamma^{k}_{i}<\bar{f}^{k}_{i}\quad \forall k=1,\cdots,n$ and $\forall \bm{q}_{i}$, where $\bar{\fbm{f}}_{i}$ is the maximum actuation of agent $i$. 
\end{assumption}

\begin{remark}
\normalfont In~\eqref{equ27}, $\Sat^{p}_{i}(\cdot)$ is selected to prevent the saturation of the actual control $\bm{f}_{i}$ and to make the agent-proxy coupling passive in $\ddothbm{q}_{i}$. Choosing the bound of $\Sat^{p}_{i}(\cdot)$ to be $\overline{\fbm{f}}_{i}-\bm{\gamma}_{i}$, the control force applied on agent $i$ is $\fbm{f}_{i}=\Sat_{i}\big(\hbm{f}_{i}\big)=\hbm{f}_{i}$ and the potential energy stored in the agent-proxy coupling is:
\begin{align}\label{equ28}
\phi_{i}(\tilbm{q}_{i})=\int^{\tilbm{q}_{i}}_{\fbm{0}}\Sat^{p}_{i}(p_{i}\bm{\sigma})d\bm{\sigma}\textrm{.}
\end{align}
To preserve connectivity, the saturated agent-proxy couplings and the unconstrained inter-proxy couplings should be designed to guarantee that $d_{ij}(t)=\|\fbm{q}_{ij}\|\leq r$ $\forall(i,j)\in\mathcal{E}(0)$ $\forall t\ge 0$. From~\eqref{equ27} and the triangle inequality, it suffices to guarantee that $\|\tilbm{q}_{i}\|+\|\hbm{q}_{ij}\|+\|\tilbm{q}_{j}\|\leq r$ $\forall t\ge 0$. \ass{ass2} implies that $\|\fbm{q}_{ij}(0)\|<r-\epsilon<r$ for any pair of initially adjacent agents. Therefore, after choosing $\hbm{q}_{i}(0)=\fbm{q}_{i}(0)$ and $\dothatbm{q}_{i}(0)=\fbm{0}$, the connectivity of the Euler-Lagrange MAS with bounded actuation can be guaranteed by enforcing $\|\hbm{q}_{ij}(t)\|\leq r-\frac{2\epsilon}{3}=\hat{r}$ and $\|\tilbm{q}_{i}(t)\|\leq\frac{\epsilon}{3}$ for all $t\ge 0$, $(i,j)\in\mathcal{E}(0)$ and $i=1,\cdots,N$.
\end{remark}

\begin{remark}\label{rem11}
\normalfont \ass{ass2} also implies that $\|\hbm{q}_{ij}(0)\|=\|\fbm{q}_{ij}(0)\|<r-\epsilon<\hat{r}$. Therefore, $\|\hbm{q}_{ij}(t)\|\leq \hat{r}$ can be guaranteed by bounding a potential function $\psi(\|\hbm{q}_{ij}\|)$ by:
\begin{align*}
\psi(\|\hbm{q}_{ij}\|)=\frac{\rho\|\hbm{q}_{ij}\|^{2}}{\hat{r}^{2}-\|\hbm{q}_{ij}\|^{2}+Q}\leq\frac{\rho\hat{r}^{2}}{Q}=\Psi_{max}\textrm{,}
\end{align*}
with $\rho$ and $Q$ positive constants and $\left(N(r-\epsilon)^{2}-\hat{r}^{2}\right)Q\leq\hat{r}^{2}\left(\hat{r}^{2}-(r-\epsilon)^{2}\right)$. Then it follows that $\Psi(\hbm{q}(0))\leq\Psi_{max}$ and that $\|\hbm{q}_{ij}(t)\|\leq\hat{r}$ $\forall t\ge 0$ if $\Psi(\hbm{q}(t))\leq\Psi(\hbm{q}(0))$ $\forall t\ge 0$.
\end{remark}

The function used to investigate the connectivity-preserving consensus of Euler-Lagrange MAS-s with limited actuation is:
\begin{equation}\label{equ29}
\begin{aligned}
V=&\sum^{N}_{i=1}\frac{1}{2}\left(\dottbm{q}_{i}\fbm{M}_{i}\dotbm{q}_{i}+\dothattbm{q}_{i}\dothatbm{q}_{i}\right)+\frac{1}{2}\sum^{N}_{i=1}\sum_{j\in\mathcal{N}_{i}(0)}\psi(\|\hbm{q}_{ij}\|)\\
&+\sum^{N}_{i=1}\int^{\tilbm{q}_{i}}_{\fbm{0}}\Sat^{p}_{i}(p_{i}\bm{\sigma})d\bm{\sigma}\textrm{.}
\end{aligned}
\end{equation} 
By~\eqref{equ18} and~\eqref{equ27}, the derivative of $V$ is:
\begin{align*}
\dot{V}=&\sum^{N}_{i=1}\dottbm{q}_{i}\Sat^{p}_{i}\left(p_{i}(\hbm{q}_{i}-\fbm{q}_{i})\right)+\sum^{N}_{i=1}\dothattbm{q}_{i}\Sat^{p}_{i}\left[p_{i}(\fbm{q}_{i}-\hbm{q}_{i})\right]\\
&-\sum^{N}_{i=1}\dothattbm{q}_{i}\sum_{j\in\mathcal{N}_{i}(0)}\nabla_{i}\psi(\|\hbm{q}_{ij}\|)-\sum^{N}_{i=1}k_{i}\dothattbm{q}_{i}\dothatbm{q}_{i}\\
&+\frac{1}{2}\sum^{N}_{i=1}\sum_{j\in\mathcal{N}_{i}(0)}\left(\dothattbm{q}_{i}\nabla_{i}\psi(\|\hbm{q}_{ij}\|)+\dothattbm{q}_{j}\nabla_{j}\psi(\|\hbm{q}_{ij}\|)\right)\\
&+\dotttilbm{q}_{i}\sum^{N}_{i=1}\Sat^{p}_{i}(p_{i}\tilbm{q}_{i})\textrm{.}
\end{align*}
Using $\dottilbm{q}_{i}=\dotbm{q}_{i}-\dothatbm{q}_{i}$ and symmetry to rearrange terms:
\begin{align*}
&\sum^{N}_{i=1}\sum_{j\in\mathcal{N}_{i}(0)}\dothattbm{q}_{j}\nabla_{j}\psi(\|\hbm{q}_{ij}\|)=\sum^{N}_{j=1}\sum_{i\in\mathcal{N}_{j}(0)}\dothattbm{q}_{i}\nabla_{i}\psi(\|\hbm{q}_{ji}\|)\\
=&\sum^{N}_{j=1}\sum_{i\in\mathcal{N}_{j}(0)}\dothattbm{q}_{i}\nabla_{i}\psi(\|\hbm{q}_{ij}\|)=\sum^{N}_{i=1}\sum_{j\in\mathcal{N}_{i}(0)}\dothattbm{q}_{i}\nabla_{i}\psi(\|\hbm{q}_{ij}\|)\textrm{,}
\end{align*}
it follows that:
\begin{equation}\label{equ30}
\dot{V}=-\sum^{N}_{i=1}k_{i}\dothattbm{q}_{i}\dothatbm{q}_{i}\leq 0\textrm{,}
\end{equation}
which implies that $V(t)\leq V(0)$ for any $t\geq 0$ if the initial connectivity is maintained.

\sect{sec: conflict} has proven that no controller can drive to connectivity-preserving consensus an Euler-Lagrange MAS with bounded actuation from any initial state. Because the initial velocities are the culprit, this paper will synchronize with connectivity maintenance MAS-s which start from rest. Thus, the remainder of the paper uses following assumption:
\begin{assumption}\label{ass4}
The Euler-Lagrange MAS with bounded actuation is initially at rest, i.e., $\dotbm{q}_{i}(0)=\fbm{0}$ for any $i=1,\cdots,N$.
\end{assumption}

Initially, the agent-proxy couplings store no potential energy:
\begin{align*}
\phi_{i}(\tilbm{q}_{i}(0))=\int^{\tilbm{q}_{i}(0)}_{\fbm{0}}\Sat^{p}_{i}(p_{i}\bm{\sigma})d\bm{\sigma}=0\textrm{,}
\end{align*}
because $\tilbm{q}_{i}(0)=\fbm{q}_{i}(0)-\hbm{q}_{i}(0)=\fbm{0}$, and the MAS is at rest, so $\dottbm{q}_{i}\fbm{M}_{i}\dotbm{q}_{i}+\dothattbm{q}_{i}\dothatbm{q}_{i}=0$ and $\dothatbm{q}_{i}(0)=\fbm{0}$. After substitution of all terms in~\eqref{equ29}, it follows that $V(0)=\Psi(\hbm{q}(0))=\Psi(\fbm{q}(0))$.  

\begin{lemma}
The potential function $\phi_{i}(\tilbm{q}_{i})$ in~\eqref{equ28} is convex with respect to $\tilbm{q}_{i}$ on $\mathbb{R}^{n}$.
\end{lemma}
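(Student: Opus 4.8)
The plan is to exploit the componentwise structure of the saturation map, which turns the integrand into a separable conservative field and reduces the lemma to elementary scalar convexity. First I would note that, because $\Sat^{p}_{i}(\cdot)$ saturates each coordinate independently, the vector field $\bm{\sigma}\mapsto\Sat^{p}_{i}(p_{i}\bm{\sigma})$ is a gradient field with a path-independent line integral, so the potential in~\eqref{equ28} separates as $\phi_{i}(\tilbm{q}_{i})=\sum_{k=1}^{n}\phi_{ik}(\tilde{q}_{ik})$, where $\tilde{q}_{ik}$ is the $k$-th component of $\tilbm{q}_{i}$ and $\phi_{ik}(x)=\int_{0}^{x}\sat^{p}_{ik}(p_{i}s)\,ds$, with $\sat^{p}_{ik}(\cdot)$ the $k$-th standard (scalar) saturation.

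Next I would establish convexity of each scalar $\phi_{ik}$ and then lift it to $\mathbb{R}^{n}$. Since the standard saturation is continuous, $\phi_{ik}$ is continuously differentiable with $\phi_{ik}'(x)=\sat^{p}_{ik}(p_{i}x)$; because $\sat^{p}_{ik}(\cdot)$ is nondecreasing and $p_{i}>0$, the derivative $\phi_{ik}'$ is nondecreasing in $x$, and a $C^{1}$ scalar function with nondecreasing derivative is convex. The function $\phi_{i}$ is then a finite sum of the convex maps $\phi_{ik}$, each viewed as a convex function of the full vector $\tilbm{q}_{i}$ that happens to depend only on one coordinate, and a finite sum of convex functions is convex; hence $\phi_{i}$ is convex on $\mathbb{R}^{n}$. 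An equivalent one-line route would argue directly at the level of the gradient: $\nabla\phi_{i}(\tilbm{q}_{i})=\Sat^{p}_{i}(p_{i}\tilbm{q}_{i})$ is a monotone map, being diagonal with each entry nondecreasing in its own argument, and a $C^{1}$ function with a monotone gradient is convex.

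The main obstacle is the non-differentiability of the saturation at its two breakpoints, which rules out a naive \emph{positive-semidefinite Hessian} argument over all of $\mathbb{R}^{n}$. The separation-plus-nondecreasing-derivative route sidesteps this cleanly: it requires only the continuity and monotonicity of $\sat^{p}_{ik}(\cdot)$, both of which the standard saturation enjoys, and it never invokes a second derivative. Consequently, the argument goes through uniformly, including at the saturation corners where the Hessian fails to exist.
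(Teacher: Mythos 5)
Your proposal is correct, and its core is essentially the paper's own argument: the paper proves convexity by showing the gradient $\nabla\phi_{i}(\tilbm{q}_{i})=\Sat^{p}_{i}(p_{i}\tilbm{q}_{i})$ is monotone, i.e.\ $\left(\nabla\phi_{i}(\fbm{x})-\nabla\phi_{i}(\fbm{y})\right)^\mathsf{T}(\fbm{x}-\fbm{y})=\sum_{k}\left(\sat^{p}_{ik}(p_{i}x_{k})-\sat^{p}_{ik}(p_{i}y_{k})\right)(x_{k}-y_{k})\geq 0$ because each scalar saturation is increasing --- which is precisely the ``equivalent one-line route'' you mention at the end, and your primary route (separating $\phi_{i}$ into scalar integrals $\phi_{ik}$ with nondecreasing derivatives and summing convex functions) is just a repackaging of the same componentwise monotonicity. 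Both versions correctly avoid any second-derivative argument at the saturation breakpoints, so there is no gap.
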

\begin{proof}
The gradient of $\phi_{i}(\tilbm{q}_{i})$ with respect to $\tilbm{q}_{i}$ is 
\begin{align*}
\nabla\phi_{i}(\tilbm{q}_{i})=\Sat^{p}_{i}(p_{i}\tilbm{q}_{i})\textrm{.}
\end{align*}
Let $\fbm{x}\in\mathbb{R}^{n}$ and $\fbm{y}\in\mathbb{R}^{n}$. Then,
\begin{align*}
&\left(\nabla\phi_{i}(\fbm{x})-\nabla\phi(\fbm{y})\right)^\mathsf{T}(\fbm{x}-\fbm{y})\\
=&\left(\Sat^{p}_{i}(p_{i}\fbm{x})-\Sat^{p}_{i}(p_{i}\fbm{y})\right)^\mathsf{T}(\fbm{x}-\fbm{y})\\
=&\sum^{n}_{k=1}\left(\sat^{p}_{ik}(p_{i}x_{k})-\sat^{p}_{ik}(p_{i}y_{k})\right)(x_{k}-y_{k})\geq 0\textrm{,}
\end{align*}
because $\sat^{p}_{ik}(\cdot)$ are increasing functions. By the first-order convexity condition, $\phi_{i}(\tilbm{q}_{i})$ is convex on $\mathbb{R}^{n}$.
\end{proof}

\begin{lemma}
On $B(\fbm{0},\frac{\epsilon}{3})=\{\tilbm{q}_{i}\ |\ \|\tilbm{q}_{i}\|\leq\frac{\epsilon}{3}\}$, the potential function $\phi_{i}(\tilbm{q}_{i})$ is maximum on the boundary $\|\tilbm{q}_{i}\|=\frac{\epsilon}{3}$ and minimum at the origin $\|\tilbm{q}_{i}\|=\fbm{0}$. 
\end{lemma}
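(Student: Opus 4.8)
The plan is to build directly on the preceding lemma, which supplies two facts at once: that $\phi_i$ is convex on $\mathbb{R}^n$, and that its gradient is $\nabla\phi_i(\tilbm{q}_i)=\Sat^{p}_{i}(p_i\tilbm{q}_i)$. Both assertions of the statement---minimum at the origin and maximum on the boundary sphere---will follow from a single radial-monotonicity argument, so I would not treat them separately.

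First I would reduce to a one-dimensional statement along rays emanating from the origin. Fix an arbitrary unit vector $\fbm{v}\in\mathbb{R}^{n}$ and set $h(t)=\phi_i(t\fbm{v})$ for $t\in[0,\tfrac{\epsilon}{3}]$, so that $h$ records the values of $\phi_i$ along the segment from $\fbm{0}$ to the boundary point $\tfrac{\epsilon}{3}\fbm{v}$. By the chain rule and the gradient formula,
\[
h'(t)=\nabla\phi_i(t\fbm{v})^\mathsf{T}\fbm{v}=\Sat^{p}_{i}(p_it\fbm{v})^\mathsf{T}\fbm{v}=\sum_{k=1}^{n}\sat^{p}_{ik}(p_itv_k)\,v_k.
\]

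The key step is to show that every summand is nonnegative. Since $p_i>0$ and $t\ge 0$, and since each standard saturation $\sat^{p}_{ik}(\cdot)$ is an odd, monotonically increasing function vanishing at zero, $\sat^{p}_{ik}(p_itv_k)$ carries the same sign as $v_k$; hence $\sat^{p}_{ik}(p_itv_k)v_k\ge 0$ for every $k$, so $h'(t)\ge 0$. Thus $h$ is nondecreasing on $[0,\tfrac{\epsilon}{3}]$, giving $h(0)\le h(t)\le h(\tfrac{\epsilon}{3})$ for all $t$ in the interval. Because $h(0)=\phi_i(\fbm{0})=0$ is the origin value independently of $\fbm{v}$, the origin attains the minimum of $\phi_i$ over the ball; and because every point of $B(\fbm{0},\tfrac{\epsilon}{3})$ lies on one such ray while the endpoint value $h(\tfrac{\epsilon}{3})=\phi_i(\tfrac{\epsilon}{3}\fbm{v})$ dominates all interior values along that ray, the maximum over the ball is attained on the boundary $\|\tilbm{q}_i\|=\tfrac{\epsilon}{3}$.

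I expect the only genuine subtlety to be the sign claim $\sat^{p}_{ik}(p_itv_k)v_k\ge 0$, which is where the odd, increasing structure of the standard saturation is used; the rest is routine. A shorter alternative would exploit the previous lemma's convexity directly: a convex function attains its maximum over a compact convex set at an extreme point, the extreme points of the closed ball being exactly its boundary sphere, while the vanishing of $\nabla\phi_i$ at the origin makes $\fbm{0}$ a global minimizer of the convex $\phi_i$. I would nevertheless keep the radial argument as the primary route, since it is self-contained and establishes both extrema uniformly.
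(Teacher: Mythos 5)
Your proof is correct, but it takes a genuinely different route from the paper's. The paper argues in three strokes: continuity of $\phi_{i}$ on the compact ball gives existence of extrema by the Weierstrass theorem; convexity (the preceding lemma) forces the maximum of $\phi_{i}$ over the compact convex ball onto its extreme points, i.e.\ the boundary sphere; and the minimum sits at the stationary point $\nabla\phi_{i}(\tilbm{q}_{i})=\Sat^{p}_{i}(p_{i}\tilbm{q}_{i})=\fbm{0}$, i.e.\ the origin, since a stationary point of a convex function is a global minimizer. Your primary argument bypasses convexity entirely: you only use the gradient formula and the sign-preserving property of the componentwise saturation to show $h'(t)=\sum_{k}\sat^{p}_{ik}(p_{i}tv_{k})v_{k}\geq 0$ along every ray, which yields both extrema at once. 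This is more elementary and self-contained (it does not lean on the previous lemma at all, and in fact proves slightly more, namely radial monotonicity), whereas the paper's argument is shorter once convexity is in hand and would survive replacing the saturation-based coupling by any convex agent--proxy potential. Two small points: oddness of $\sat^{p}_{ik}$ is more than you need --- monotone increasing and vanishing at zero already give $\sat^{p}_{ik}(x)x\geq 0$ --- and your boundary-maximum conclusion implicitly still uses continuity of $\phi_{i}$ on the compact sphere to turn ``every interior value is dominated by a boundary value'' into ``the maximum is attained on the boundary''; it would be worth saying so, since that is exactly the role Weierstrass plays in the paper. The convexity-based alternative you sketch at the end is essentially the paper's proof.
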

\begin{proof}
The potential function $\phi_{i}(\tilbm{q}_{i})$ is continuous on the closed and bounded ball $B(\fbm{0},\frac{\epsilon}{3})\subseteq\mathbb{R}^{n}$. Therefore, by the Weierstrass theorem, $\phi_{i}(\tilbm{q}_{i})$ has its global minimum and maximum on $B(\fbm{0},\frac{\epsilon}{3})$. Further, $\phi_{i}(\tilbm{q}_{i})$ is convex on $\mathbb{R}^{n}$. Hence, on the ball $B(\fbm{0},\frac{\epsilon}{3})$, $\phi_{i}(\tilbm{q}_{i})$ attains its global maximum on the boundary of $B(\fbm{0},\frac{\epsilon}{3})$, and its global minimum at the point with $\nabla\phi_{i}(\tilbm{q}_{i})=\Sat^{p}_{i=1}(p_{i}\tilbm{q}_{i})=\fbm{0}$, that is at $\tilbm{q}_{i}=\fbm{0}$. 
\end{proof}

\begin{lemma}\label{lem4}
Let $\phi^{*}_{i}$ be the minimum value of the potential function $\phi_{i}$ on the boundary of the ball $B(\fbm{0},\frac{\epsilon}{3})$:
\begin{equation}\label{equ31}
\begin{aligned}
\phi^{*}_{i}=&\min \limits_{\tilbm{q}_{i}}\quad \phi_{i}(\tilbm{q}_{i})=\int^{\tilbm{q}_{i}}_{\fbm{0}}\Sat^{p}_{i}(p_{i}\bm{\sigma})d\bm{\sigma}\\
&s.t.\quad \quad \|\tilbm{q}_{i}\|=\frac{\epsilon}{3}\textrm{.}
\end{aligned} 
\end{equation}
If $\phi_{i}(\tilbm{q}_{i})\leq\phi^{*}_{i}$, then $\tilbm{q}_{i}\in B(\fbm{0},\frac{\epsilon}{3})$.
\end{lemma}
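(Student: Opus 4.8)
The plan is to argue by contradiction, leaning entirely on the convexity established in the preceding lemma together with the positive-definiteness of $\phi_i$ away from the origin. Suppose $\tilbm{q}_{i}\notin B(\fbm{0},\frac{\epsilon}{3})$, that is $\|\tilbm{q}_{i}\|>\frac{\epsilon}{3}$. Since the origin lies in the interior of the ball while $\tilbm{q}_{i}$ lies outside it, and since norms vary continuously along a segment, the segment joining $\fbm{0}$ to $\tilbm{q}_{i}$ crosses the boundary sphere at a point $\fbm{z}=\lambda\tilbm{q}_{i}$ with $\lambda\in(0,1)$ and $\|\fbm{z}\|=\frac{\epsilon}{3}$. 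The whole argument is then a short chain of inequalities built around this intersection point.

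Next I would exploit convexity. Writing $\fbm{z}=\lambda\tilbm{q}_{i}+(1-\lambda)\fbm{0}$, the convexity of $\phi_i$ on $\mathbb{R}^{n}$ (proven above) together with the fact that $\phi_i(\fbm{0})=0$ gives $\phi_i(\fbm{z})\le\lambda\phi_i(\tilbm{q}_{i})+(1-\lambda)\phi_i(\fbm{0})=\lambda\phi_i(\tilbm{q}_{i})$. Because $\fbm{z}$ lies on the boundary, the definition of $\phi^{*}_{i}$ as the minimum of $\phi_i$ over that boundary yields $\phi^{*}_{i}\le\phi_i(\fbm{z})$. Chaining these two facts with the hypothesis $\phi_i(\tilbm{q}_{i})\le\phi^{*}_{i}$ produces $\phi^{*}_{i}\le\lambda\phi_i(\tilbm{q}_{i})\le\lambda\phi^{*}_{i}$.

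The only delicate point—and where I expect the real work to sit—is ruling out the degenerate case $\phi^{*}_{i}=0$, since otherwise the final inequality is not strict. I would verify that $\phi_i(\tilbm{q}_{i})>0$ for every $\tilbm{q}_{i}\neq\fbm{0}$ by parametrizing the line integral in~\eqref{equ28} along the ray $\bm{\sigma}=t\tilbm{q}_{i}$, $t\in[0,1]$, which gives $\phi_i(\tilbm{q}_{i})=\int^{1}_{0}\sum^{n}_{k=1}\sat^{p}_{ik}(p_{i}t\tilde{q}_{ik})\,\tilde{q}_{ik}\,dt$; each summand is nonnegative because $\sat^{p}_{ik}(\cdot)$ is odd and increasing, and strictly positive as soon as some component $\tilde{q}_{ik}\neq0$. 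Every point of the sphere $\|\tilbm{q}_{i}\|=\frac{\epsilon}{3}$ is nonzero, so $\phi_i>0$ there, and continuity of $\phi_i$ over the compact boundary then forces $\phi^{*}_{i}>0$. With $\phi^{*}_{i}>0$ and $\lambda\in(0,1)$, the chain $\phi^{*}_{i}\le\lambda\phi^{*}_{i}<\phi^{*}_{i}$ is a contradiction. Hence the assumption $\|\tilbm{q}_{i}\|>\frac{\epsilon}{3}$ is untenable, and $\tilbm{q}_{i}\in B(\fbm{0},\frac{\epsilon}{3})$ as claimed.
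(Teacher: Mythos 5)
Your proof is correct and follows the same strategy as the paper: assume $\tilbm{q}_{i}$ lies outside the ball, take the point where the segment from the origin meets the boundary sphere, and derive a contradiction from convexity together with $\phi_{i}(\fbm{0})=0$ and the minimality of $\phi^{*}_{i}$ on the boundary. In fact your write-up is tighter than the paper's: you correctly use $\phi^{*}_{i}\leq\phi_{i}(\fbm{z})$ for the intersection point (the paper loosely asserts equality there), and you explicitly establish $\phi^{*}_{i}>0$ via the line-integral representation of $\phi_{i}$, which is needed for the final strict inequality and which the paper leaves implicit.
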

\begin{proof}
Suppose there exists a $\tilbm{q}_{i}\notin B(\fbm{0},\frac{\epsilon}{3})$ such that $\phi_{i}(\tilbm{q}_{i})\leq\phi^{*}_{i}$. Let $\fbm{x}$ be such that $\phi_{i}(\fbm{x})=\phi^{*}_{i}$. Then, it follows that $\fbm{x}=\lambda\fbm{0}+(1-\lambda)\tilbm{q}_{i}$ for some $0<\lambda<1$ and, by the convexity of $\phi_{i}(\tilbm{q}_{i})$, that:
\begin{align*}
\phi_{i}(\fbm{x})=&\phi_{i}\left(\lambda\fbm{0}+(1-\lambda)\tilbm{q}_{i}(t)\right)\leq\lambda\phi_{i}(\fbm{0})+(1-\lambda)\phi_{i}(\tilbm{q}_{i})\\
=&(1-\lambda)\phi_{i}(\tilbm{q}_{i}(t))<\phi^{*}_{i}\textrm{,}
\end{align*}
which contradicts $\phi_{i}(\fbm{x})=\phi^{*}_{i}$. Therefore, $\phi_{i}(\tilbm{q}_{i})\leq\phi^{*}_{i}$ implies that $\tilbm{q}_{i}\in B(\fbm{0},\frac{\epsilon}{3})$, see \fig{fig9}.
\end{proof}

\begin{figure}[!hbt]
\centering
\includegraphics[width=8cm,height=5cm]{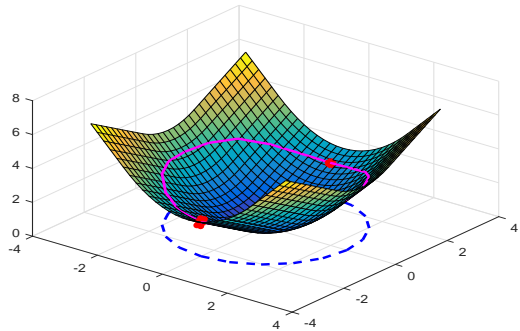}
\caption{A convex potential function $\phi_{i}(\tilbm{q}_{i})$ on $\mathbb{R}^{2}$ whose values on the boundary of a ball around the origin of $\mathbb{R}^{2}$~(blue dashed circle) are shown as the pink curve. Along the pink curve, there are two points~(red points) where the potential function is equal to $\phi^{*}_{i}$. All points $\tilbm{q}_{i}$ that satisfy $\phi_{i}(\tilbm{q}_{i})<\phi^{*}_{i}$ are in the blue circle.}
\label{fig9}
\end{figure} 

Let $\phi^{*}=\min \limits_{i=1\cdots,N}(\phi^{*}_{i})$, and the potential function $\Psi(\hbm{q})$~\eqref{equ29} satisfy the first two properties of~\eqref{equ1} and attain its maximum $\Psi_{max}=\phi^{*}$ if $\exists (i,j)\in\mathcal{E}(0)$ such that $\|\hbm{q}_{ij}\|=\hat{r}$. \rem{rem11} shows that $\Psi(\hbm{q})$ exists. Then, $V(0)=\Psi(\hbm{q}(0))<\Psi_{max}=\phi^{*}$ together with~\eqref{equ30} imply that $V(t)<\phi^{*}$ and, further, that $\Psi(\hbm{q}(t))<\phi^{*}$ and $\phi_{i}(\tilbm{q}_{i}(t))<\phi^{*}\leq\phi^{*}_{i}$, for all $i=1,\cdots,N$. It follows that $\|\hbm{q}_{ij}(t)\|<\hat{r}$ for all $(i,j)\in\mathcal{E}(0)$ and $\|\tilbm{q}_{i}(t)\|<\frac{\epsilon}{3}$ for $i=1,\cdots,N$, and therefore that $\|\fbm{q}_{ij}(t)\|\leq\|\tilbm{q}_{i}\|+\|\hbm{q}_{ij}\|+\|\tilbm{q}_{j}\|<r$, i.e., connectivity is preserved.

Coordination can be concluded noting that~\eqref{equ30} leads to $\dothatbm{q}_{i}\in\mathcal{L}_{2}\cap\mathcal{L}_{\infty}$, which yields that $\dothatbm{q}_{i}\to\fbm{0}$. After taking the derivative of $\ddothbm{q}_{i}$ in~\eqref{equ27}, Barbalat's lemma yields $\ddothbm{q}_{i}\to\fbm{0}$. Bounded second derivatives of $\ddothbm{q}_{i}$ lead to $\dddothbm{q}_{i}\to\fbm{0}$ and further $\dothatbm{q}_{i}\to\dotbm{q}_{i}\to\fbm{0}$. The derivative of~\eqref{equ18} shows that $\ddotbm{q}_{i}\to\fbm{0}$ and thus $\Sat^{p}_{i}\left(p_{i}(\hbm{q}_{i}-\fbm{q}_{i})\right)\to\fbm{0}$. Together, all the above inferences lead to $\tilbm{q}_{i}\to\fbm{0}$ and $\hbm{q}_{ij}\to\fbm{0}$, i.e., $\fbm{q}_{i}\to\fbm{q}_{j}$. 

\begin{remark}
\normalfont The indirect coupling control~\eqref{equ27} has three main benefits: 1) it does not require velocity estimation because $\hfbm{f}_{i}$ use only the position of the agent and its proxy; 2) it decomposes connectivity preservation and actuator saturation into two subproblems that can be addressed separately and neither of which needs to consider multiple couplings with limited actuation; 3) it can also cope with time-varying delays and system uncertainties, as shown in the next subsection.
\end{remark}

\subsection{Uncertain parameters and communication delays}

Practical Euler-Lagrange MAS-s may have uncertain parameters and time-varying delays in the inter-agent communications. If parameters are uncertain, the agents cannot compensate gravity directly. If the communications are delayed, the agents cannot receive the positions of their neighbours instantly.
Given two agents $i$ and $j$, adjacent at time $t$. If the transmission from agent $i$ to agent $j$ has a delay $T_{ji}(t)$, then the two agents can receive only the delayed positions $\fbm{q}_{jd}(t)=\fbm{q}_{j}(t-T_{ji}(t))$ and $\fbm{q}_{id}(t)=\fbm{q}_{i}(t-T_{ij}(t))$, respectively. Then, the control should keep the agents within communication distance $d_{ij}(t)=\|\fbm{q}_{ij}(t)\|\leq r$ using only the delayed positions of neighbours. 

The following indirect coupling control strategy is designed to maintain all initial communication links and to synchronize the Euler-Lagrange MAS using only limited actuation: 
\begin{equation}\label{equ32}
\begin{aligned}
\hbm{f}_{i}=&\fbm{\Phi}_{i}(\fbm{q}_{i},\dotbm{q}_{i},\fbm{e}_{i},\dotbm{e}_{i})\hfbm{\theta}_{i}-\Sat^{pd}_{i}(\mu_{i}\fbm{s}_{i}+\fbm{e}_{i})\textrm{,}\\
\dothatfbm{\theta}_{i}=&\text{Proj}_{\hfbm{\theta}_{i}}(\bm{\omega}_{i})\textrm{,}\\
\bm{\omega}_{i}=&-\beta_{i}\tfbm{\Phi}_{i}(\fbm{q}_{i},\dotbm{q}_{i},\fbm{e}_{i},\dotbm{e}_{i})\fbm{s}_{i}\textrm{,}\\
\ddothbm{q}_{i}=&\Sat^{pd}_{i}(\fbm{e}_{i})-\sum_{j\in\mathcal{N}_{i}(0)}\nabla_{i}\psi(\|\hbm{q}_{ijd}\|)-k_{i}\dothatbm{q}_{i}\textrm{,}
\end{aligned}
\end{equation}
where: $i=1,\cdots,N$; $\hbm{f}_{i}$ is the designed control force of agent $i$; $\bm{\Phi}_{i}(\fbm{q}_{i},\dotbm{q}_{i},\fbm{e}_{i},\dotbm{e}_{i})\hfbm{\theta}_{i}$ is the adaptive dynamics compensation as that in~\eqref{equ23}; $\fbm{e}_{i}=p_{i}\tilbm{q}_{i}=p_{i}(\fbm{q}_{i}-\hbm{q}_{i})$ is the scaled displacement between agent $i$ and its proxy; $\fbm{s}_{i}=\dotbm{q}_{i}+\alpha\fbm{e}_{i}$; $\Sat^{pd}_{i}(\cdot)$ is the standard saturation function as defined in~\sect{sec: single-integrator}; $\hbm{q}_{ijd}=\hbm{q}_{i}(t)-\hbm{q}_{jd}(t)$ with $\hbm{q}_{jd}(t)=\hbm{q}_{j}(t-T_{ji}(t))$; and $\mu_{i}$, $p_{i}$, $k_{i}$ and $\alpha$ are positive constants. 


Component-wise, the projection operators $\text{Proj}_{\hfbm{\theta}_{i}}(\bm{\omega}_{i})$ are:
\begin{equation}\label{equ33}
\begin{aligned}
\dot{\hat{\theta}}^{k}_{i}=\begin{cases}
\left(1-\upsilon_{l}(\hat{\theta}^{k}_{i})\right)\omega^{k}_{i}\quad &\underline{\theta}^{k}_{i}\leq\hat{\theta}^{k}_{i}\leq\underline{\theta}^{k}_{i}+\delta\ \&\ \omega^{k}_{i}<0\textrm{,}\\
\left(1-\upsilon_{u}(\hat{\theta}^{k}_{i})\right)\omega^{k}_{i}\quad &\overline{\theta}^{k}_{i}-\delta\leq\hat{\theta}^{k}_{i}\leq\overline{\theta}^{k}_{i}\ \&\ \omega^{k}_{i}>0\textrm{,}\\
\omega^{k}_{i}\quad &\text{otherwise}\textrm{,}
\end{cases}
\end{aligned}
\end{equation}
where $\upsilon_{l}(\hat{\theta}^{k}_{i})=\min\left(1,\frac{\underline{\theta}^{k}_{i}+\delta-\hat{\theta}^{k}_{i}}{\delta}\right)$ and $\upsilon_{u}(\hat{\theta}^{k}_{i})=\min\left(1,\frac{\hat{\theta}^{k}_{i}-\overline{\theta}^{k}_{i}+\delta}{\delta}\right)$, with $0<\delta<\frac{1}{2}(\overline{\theta}^{k}_{i}-\underline{\theta}^{k}_{i})$ and $k=1,\cdots,n$. From~\cite{Krstic1995}, the projectors guarantee that the selection $\underline{\theta}^{k}_{i}\leq\hat{\theta}^{k}_{i}(0)\leq\overline{\theta}^{k}_{i}$ leads to $\underline{\theta}^{k}_{i}\leq\hat{\theta}^{k}_{i}\leq\overline{\theta}^{k}_{i}$ for all $k=1,\cdots,n$. Then, the dynamics compensation terms are bounded as in the following lemma.
\begin{lemma}
If $\underline{\theta}^{k}_{i}\leq\hat{\theta}^{k}_{i}\leq\overline{\theta}^{k}_{i}$ and $|\dot{q}^{k}_{i}|\leq{v}^{k}_{i}$ and $|\dot{\hat{q}}^{k}_{i}|\leq\hat{v}^{k}_{i}$ for some constants ${v}^{k}_{i}$ and $\hat{v}^{k}_{i}$, then there exist $\eta^{k}_{i}\geq{0}$ such that the dynamic compensation terms are bounded by $|\hat{g}^{k}_{i}|\leq\eta^{k}_{i}$, where $k=1,\cdots,n$ and $\begin{pmatrix}\hat{g}^{1}_{i} \cdots \hat{g}^{k}_{i} \cdots \hat{g}^{n}_{i}\end{pmatrix}^\mathsf{T}=\bm{\Phi}_{i}(\fbm{q}_{i},\dotbm{q}_{i},\fbm{e}_{i},\dotbm{e}_{i})\hfbm{\theta}_{i}$.
\end{lemma}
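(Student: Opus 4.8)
The plan is to exploit the explicit physical structure of the compensation term rather than treating $\bm{\Phi}_{i}\hfbm{\theta}_{i}$ as an opaque regressor product. Recall from the definition following~\eqref{equ23} that $\bm{\Phi}_{i}(\fbm{q}_{i},\dotbm{q}_{i},\fbm{e}_{i},\dotbm{e}_{i})\hfbm{\theta}_{i}=\hbm{M}_{i}(-\alpha\dotbm{e}_{i})+\hbm{C}_{i}(-\alpha\fbm{e}_{i})+\hbm{g}_{i}$, now evaluated with $\fbm{e}_{i}=p_{i}\tilbm{q}_{i}$ and $\dotbm{e}_{i}=p_{i}(\dotbm{q}_{i}-\dothatbm{q}_{i})$. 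First I would bound each of the three summands separately in Euclidean norm and then add them, because a bound $\|\bm{\Phi}_{i}\hfbm{\theta}_{i}\|\le\eta_{i}$ immediately bounds every component, $|\hat{g}^{k}_{i}|\le\eta_{i}$, which supplies the claimed constants $\eta^{k}_{i}$. Note also that the compensation term is built only from agent $i$'s own (local, undelayed) signals, so the communication delays play no role here.

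The starting observation is that the projection~\eqref{equ33} keeps the estimate inside its box, $\underline{\theta}^{k}_{i}\le\hat{\theta}^{k}_{i}\le\overline{\theta}^{k}_{i}$, which is precisely the first hypothesis. Hence the estimated matrices $\hbm{M}_{i}$, $\hbm{C}_{i}$ and $\hbm{g}_{i}$ are the true functional forms evaluated with parameters confined to the same admissible set as the physical ones, and therefore inherit properties~\ref{P1} and~\ref{P3} and \ass{ass3} with constants that are \emph{uniform in} $\fbm{q}_{i}$. Consequently: by~\ref{P1} the inertia summand obeys $\|\hbm{M}_{i}(-\alpha\dotbm{e}_{i})\|\le\alpha\hat{\lambda}_{i2}p_{i}\|\dotbm{q}_{i}-\dothatbm{q}_{i}\|$, which is finite since the velocity hypotheses $|\dot{q}^{k}_{i}|\le v^{k}_{i}$ and $|\dot{\hat{q}}^{k}_{i}|\le\hat{v}^{k}_{i}$ bound both $\dotbm{q}_{i}$ and $\dothatbm{q}_{i}$; by~\ref{P3} the Coriolis summand obeys $\|\hbm{C}_{i}(-\alpha\fbm{e}_{i})\|\le\alpha\hat{c}_{i}p_{i}\|\dotbm{q}_{i}\|\,\|\tilbm{q}_{i}\|$, whose velocity factor is again bounded by hypothesis; and by \ass{ass3} the gravity summand is bounded componentwise by $\gamma^{k}_{i}$, uniformly in $\fbm{q}_{i}$.

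The step I expect to be the main obstacle is supplying the bound on $\fbm{e}_{i}=p_{i}\tilbm{q}_{i}$ required by the Coriolis summand, because the lemma's hypotheses constrain only the velocities, not the agent--proxy mismatch. The resolution is to invoke the connectivity-maintenance constraint intrinsic to the indirect coupling framework, namely $\|\tilbm{q}_{i}\|\le\frac{\epsilon}{3}$ (cf.\ the ball $B(\fbm{0},\frac{\epsilon}{3})$ of \lem{lem4}), which holds by construction of the controller and yields $\|\fbm{e}_{i}\|\le p_{i}\frac{\epsilon}{3}$. With this the Coriolis summand is bounded by a finite constant depending on $\alpha$, $\hat{c}_{i}$, $p_{i}$, $\epsilon$ and the velocity bounds. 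Summing the three summand bounds produces a finite $\eta_{i}$ with $\|\bm{\Phi}_{i}\hfbm{\theta}_{i}\|\le\eta_{i}$; taking $\eta^{k}_{i}=\eta_{i}\ge 0$ and using $|\hat{g}^{k}_{i}|\le\|\bm{\Phi}_{i}\hfbm{\theta}_{i}\|$ for each $k=1,\cdots,n$ completes the argument.
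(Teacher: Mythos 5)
Your decomposition of $\bm{\Phi}_{i}\hfbm{\theta}_{i}$ into the inertia, Coriolis and gravity summands, bounded via~\ref{P1}, \ref{P3} and \ass{ass3} together with the parameter box and the velocity hypotheses, is exactly what the paper means by ``follows from the definition of $\bm{\Phi}_{i}(\fbm{q}_{i},\dotbm{q}_{i},\fbm{e}_{i},\dotbm{e}_{i})\hfbm{\theta}_{i}$ directly''; the paper omits the details, and your inertia and gravity bounds are unproblematic. You have also correctly located the only nontrivial point: the Coriolis summand $\hbm{C}_{i}(-\alpha\fbm{e}_{i})$ needs a bound on $\fbm{e}_{i}=p_{i}\tilbm{q}_{i}$, which the lemma's stated hypotheses (parameter box plus velocity bounds) do not supply.

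The weakness is in how you close that hole. You invoke $\|\tilbm{q}_{i}\|\le\frac{\epsilon}{3}$ as holding ``by construction of the controller,'' but in the paper's logical order that bound is a \emph{conclusion} of the Lyapunov argument of the delayed/uncertain section ($V(t)<\phi^{*}\Rightarrow\phi_{i}(\tilbm{q}_{i})<\phi^{*}_{i}\Rightarrow\tilbm{q}_{i}\in B(\fbm{0},\frac{\epsilon}{3})$ via \lem{lem4}), and that Lyapunov argument itself presupposes this lemma: the constants $\eta^{k}_{i}$ fix the saturation levels $\overline{\fbm{f}}_{i}-\bm{\eta}_{i}$ of $\Sat^{pd}_{i}(\cdot)$, which is what guarantees $\fbm{f}_{i}=\Sat_{i}(\hbm{f}_{i})=\hbm{f}_{i}$ and hence the validity of the closed-loop dynamics used to show $\dot V\le 0$. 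As written, your proof is therefore circular. The fix is either to add boundedness of $\fbm{e}_{i}$ (equivalently of $\tilbm{q}_{i}$) as an explicit hypothesis of the lemma, on the same footing as the velocity bounds --- which is evidently the paper's intent, since the velocity bounds are likewise only established a posteriori from $V\in\mathcal{L}_{\infty}$ --- or to run a continuation argument: $\tilbm{q}_{i}(0)=\fbm{0}$ and $\dotbm{q}_{i}(0)=\fbm{0}$, and on the maximal interval where $\|\tilbm{q}_{i}\|\le\frac{\epsilon}{3}$ and the velocity bounds hold, the lemma's conclusion holds, the actuators do not saturate, and the Lyapunov estimate then shows these bounds cannot be violated. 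Stating which of these you are doing would make the argument sound; as it stands, the appeal to \lem{lem4} assumes what the lemma is needed to establish.
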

The proof of the lemma follows from the definition of $\bm{\Phi}_{i}(\fbm{q}_{i},\dotbm{q}_{i},\fbm{e}_{i} ,\dotbm{e}_{i})\hfbm{\theta}_{i}$ directly, and is omitted here. 

Feasibility of the connectivity-preserving consensus problem for Euler-Lagrange MAS-s with limited actuation also requires \ass{ass4} and \ass{ass5} below.
\begin{assumption}\label{ass5}
The actuators can more than execute the dynamic compensations, i.e., ${\eta}^{k}_{i}<\overline{{f}}^{k}_{i}$ for all $k=1,\cdots,n$.
\end{assumption}
After choosing the bounds of $\Sat^{pd}_{i}(\cdot)$ equal to $\overline{\fbm{f}}_{i}-\bm{\eta}_{i}$, the actual agent controls are equal to their designed controls, $\fbm{f}_{i}=\Sat_{i}(\hbm{f}_{i})=\hbm{f}_{i}$.

\begin{remark}
\normalfont As in~\sect{sec: saturations}, saturated P controls couple each agent $i$ to its proxy $i^{'}$ in~\eqref{equ32}, and gradient-based controls couple proxies of adjacent agents. The virtual proxies facilitate a design 
which addresses uncertainties and time-varying delays in $\hbm{f}_{i}$ and $\ddothbm{q}_{i}$ separately. In $\hbm{f}_{i}$, system uncertainties are compensated by $\bm{\Phi}_{i}(\fbm{q}_{i},\dotbm{q}_{i},\fbm{e}_{i},\dotbm{e}_{i})\hfbm{\theta}_{i}$, which rely only on local information $\fbm{q}_{i}$, $\hbm{q}_{i}$, $\dotbm{q}_{i}$ and $\dothatbm{q}_{i}$. In turn, the time-varying communication delays distort only the inter-proxy couplings in $\ddothbm{q}_{i}$. Because the proxies are designed dynamics with no physical constraints, sufficient damping $-\kappa_{i}\dothatbm{q}_{i}$ can be injected in the inter-proxy couplings to suppress the distorsions introduced by the time-varying communication delays.
\end{remark}

The function which serves to investigate the connectivity preservation and synchronization of an Euler-Lagrange MAS that starts from rest and has bounded actuation, uncertain parameters and time-varying communication delays is:
\begin{equation}\label{equ34}
\begin{aligned}
V=&\frac{1}{2}\sum^{N}_{i=1}\left(\tbm{s}_{i}\fbm{M}_{i}\fbm{s}_{i}+\frac{1}{\beta_{i}}\ttilfbm{\theta}_{i}\tilfbm{\theta}_{i}\right)+\sum^{N}_{i=1}\int^{\tilbm{q}_{i}}_{\fbm{0}}\Sat^{pd}_{i}(p_{i}\bm{\sigma})d\bm{\sigma}\\
&+\frac{1}{2}\sum^{N}_{i=1}\dothattbm{q}_{i}\dothatbm{q}_{i}+\frac{1}{2}\sum^{N}_{i=1}\sum_{j\in\mathcal{N}_{i}(0)}\psi(\|\hbm{q}_{ij}\|)\textrm{,}
\end{aligned}
\end{equation}
where: $\beta_{i}>0$; $\tilfbm{\theta}_{i}=\bm{\theta}_{i}-\hfbm{\theta}_{i}$ are the estimation errors for the system parameters; and, as in~\sect{sec: saturations}, 
\begin{align*}
\phi_{i}(\tilbm{q}_{i})=\int^{\tilbm{q}_{i}}_{\fbm{0}}\Sat^{pd}_{i}(p_{i}\bm{\sigma})d\bm{\sigma}
\end{align*}
are convex potential functions with global maximum on the boundary of $B(\fbm{0},\frac{\epsilon}{3})$ and with minimum at $\|\tilbm{q}_{i}\|=\fbm{0}$.

Similar to~\eqref{equ24}, the closed-loop dynamics of the uncertain Euler-Lagrange MAS~\eqref{equ18} with bounded actuation under the control~\eqref{equ32} are:
\begin{align*}
&\fbm{M}_{i}(\fbm{q}_{i})\dotbm{s}_{i}+\fbm{C}_{i}(\fbm{q}_{i},\dotbm{q}_{i})\fbm{s}_{i}\\
&=-\bm{\Phi}_{i}(\fbm{q}_{i},\dotbm{q}_{i},\fbm{e}_{i},\dotbm{e}_{i})\tilfbm{\theta}_{i}-\Sat^{pd}_{i}(\mu_{i}\fbm{s}_{i}+\fbm{e}_{i})\textrm{.}
\end{align*} 
From~\cite{Krstic1995} and the projector properties, it follows that:
\begin{align*}
&-\tfbm{s}_{i}\bm{\Phi}_{i}(\fbm{q}_{i},\dotbm{q}_{i},\fbm{e}_{i},\dotbm{e}_{i})\tilfbm{\theta}_{i}+\frac{1}{\beta_{i}}\ttilfbm{\theta}_{i}\dottilfbm{\theta}_{i}\\
=&\frac{1}{\beta_{i}}\ttilfbm{\theta}_{i}\left(\bm{\omega}_{i}-\dothatfbm{\theta}_{i}\right)=\ttilfbm{\theta}_{i}\left(\bm{\omega}_{i}-\text{Proj}_{\hfbm{\theta}_{i}}\left(\bm{\omega}_{i}\right)\right)\leq 0\textrm{,}
\end{align*}
with $\bm{\omega}_{i}=-\beta_{i}\tfbm{\Phi}_{i}(\fbm{q}_{i},\dotbm{q}_{i},\fbm{e}_{i},\dotbm{e}_{i})\fbm{s}_{i}$. Using symmetry, the derivative of $V$ can be written:
\begin{align*}
\dot{V}=&-\sum^{N}_{i=1}\left(\tbm{s}_{i}\Sat^{pd}_{i}(\mu_{i}\fbm{s}_{i}+\fbm{e}_{i})-\dotttilbm{q}_{i}\Sat^{pd}_{i}(p_{i}\tilbm{q}_{i})\right)\\
&+\sum^{N}_{i=1}\dothattbm{q}_{i}\Sat^{pd}_{i}(\fbm{e}_{i})-\sum^{N}_{i=1}k_{i}\dothattbm{q}_{i}\dothatbm{q}_{i}\\
&-\sum^{N}_{i=1}\dothattbm{q}_{i}\sum_{j\in\mathcal{N}_{i}(0)}\left(\nabla_{i}\psi(\|\hbm{q}_{ijd}\|)-\nabla_{i}\psi(\|\hbm{q}_{ij}\|)\right)\textrm{.}
\end{align*}

The following lemma facilitates the simplification of $\dot{V}$.
\begin{lemma}\label{lem6}
Let $\Sat(\cdot)$ be a standard saturation function. For any $\fbm{x}\in\mathbb{R}^{n}$, $\fbm{y}\in\mathbb{R}^{n}$, the following inequality holds:
\begin{align*}
-\tbm{x}\Sat(\fbm{x}+\fbm{y})\leq -\tbm{x}\Sat(\fbm{y})\textrm{.}
\end{align*}
\end{lemma}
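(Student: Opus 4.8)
The plan is to move everything onto one side and prove the equivalent statement $\tbm{x}\left(\Sat(\fbm{x}+\fbm{y})-\Sat(\fbm{y})\right)\geq 0$. Since the standard saturation operator $\Sat(\cdot)$ acts component-wise---its $k$-th output depends only on the $k$-th input through the scalar map $\sat(\cdot)$---this inner product decouples as $\sum_{k=1}^{n} x_{k}\left(\sat(x_{k}+y_{k})-\sat(y_{k})\right)$. Hence it suffices to establish the scalar inequality $x_{k}\left(\sat(x_{k}+y_{k})-\sat(y_{k})\right)\geq 0$ for each index $k$ and then sum.

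For the scalar claim I would invoke the monotonicity of $\sat(\cdot)$: the standard saturation function is non-decreasing on $\mathbb{R}$ (linear and strictly increasing on its middle region, and constant on the two saturated branches). I would then split on the sign of $x_{k}$. If $x_{k}\geq 0$, then $x_{k}+y_{k}\geq y_{k}$, so monotonicity gives $\sat(x_{k}+y_{k})\geq\sat(y_{k})$, making the product of two non-negative factors non-negative. If $x_{k}<0$, then $x_{k}+y_{k}<y_{k}$, so $\sat(x_{k}+y_{k})\leq\sat(y_{k})$, making the product of two non-positive factors again non-negative. In either case each summand is $\geq 0$, which proves the scalar inequality and, after summation over $k$, the lemma.

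The argument is short, and the only subtlety---hence the step I would be most careful about---is the component-wise decomposition: it relies crucially on $\Sat(\cdot)$ being a diagonal (decoupled) operator, so that no cross terms $x_{k}\,\sat(\cdot)$ with mismatched indices appear. Once that is made explicit, the remaining work is just the sign-based case analysis built on monotonicity, with no delicate estimates required. This is precisely the same scalar monotonicity property already exploited in the convexity proof of $\phi_{i}$ earlier in this section, where the component maps $\sat^{p}_{ik}(\cdot)$ were used as increasing functions; so the present lemma can be regarded as a vectorized restatement of that elementary monotonicity fact.
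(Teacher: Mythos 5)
Your proof is correct, and it reaches the scalar inequality by a cleaner route than the paper. Both arguments begin with the same reduction: since $\Sat(\cdot)$ acts component-wise, it suffices to show $-x_{k}\sat(x_{k}+y_{k})\leq -x_{k}\sat(y_{k})$ for each $k$. Where you diverge is in how that scalar inequality is established. The paper runs an exhaustive case analysis on the saturation regions --- whether $\sat(x_{k}+y_{k})$ equals $x_{k}+y_{k}$ or its bound, whether $\sign(x_{k})=\sign(y_{k})$, and whether $\sat(y_{k})$ is saturated --- and verifies the inequality separately in each branch. You instead observe that the whole claim is equivalent to $x_{k}\bigl(\sat(x_{k}+y_{k})-\sat(y_{k})\bigr)\geq 0$, which follows in two lines from the monotonicity of $\sat(\cdot)$: adding $x_{k}$ to the argument moves $\sat$ in the direction of $\sign(x_{k})$, so the two factors always share a sign. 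Your version buys generality (it holds for any non-decreasing scalar map, with no symmetry or piecewise-linearity assumption on the saturation bounds, which the paper's sub-case $\sat(y_{k})=s\cdot\sign(y_{k})$ implicitly uses) and it eliminates the risk of a missed branch in the case enumeration; the paper's version makes the role of the saturation level $s$ explicit in each regime but proves nothing more. You also correctly flag the one genuine hypothesis being used --- that $\Sat(\cdot)$ is a diagonal operator --- which the paper leaves implicit.
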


\begin{proof}
It suffices to show that $-x_{k}\sat(x_{k}+y_{k})\leq -x_{k}\sat(y_{k})$, for any $k=1,\cdots,n$. Let the bound on $\sat(\cdot)$ be $s$.
\begin{itemize}
\item[1. ] $sat(x_{k}+y_{k})=x_{k}+y_{k}$.\\
If $\sign(x_{k})=\sign(y_{k})$, then $-x_{k}\sat(x_{k}+y_{k})=-x^{2}_{k}-x_{k}y_{k}\leq -x_{k}y_{k}$ and $-x_{k}\sat(y_{k})\geq -x_{k}y_{k}$. Hence, $-x_{k}\sat(x_{k}+y_{k})\leq -x_{k}\sat(y_{k})$.\\
If $\sign(x_{k})\neq\sign(y_{k})$, then $-x_{k}\sat(x_{k}+y_{k})=-x^{2}_{k}+|x_{k}y_{k}|$ and $-x_{k}\sat(x_{k}+y_{k})\leq |x_{k}|s$.
\begin{itemize}
\item[\textcircled{1}] If $\sat(y_{k})=y_{k}$, then $-x_{k}\sat(y_{k})=|x_{k}y_{k}|$.
\item[\textcircled{2}] If $\sat(y_{k})=s\cdot\sign(y_{k})$, then $-x_{k}\sat(y_{k})=|x_{k}|s$.
\end{itemize}
Thus, $-x_{k}\sat(x_{k}+y_{k})\leq -x_{k}\sat(y_{k})$ in both cases.

\item[2. ] $sat(x_{k}+y_{k})\neq x_{k}+y_{k}$.\\
If $\sign(x_{k}+y_{k})=\sign(x_{k})$, then $-x_{k}\sat(x_{k}+y_{k})=-|x_{k}|s$ and $-x_{k}\sat(y_{k})\geq -|x_{k}|s$. This gives $-x_{k}\sat(x_{k}+y_{k})\leq -x_{k}\sat(y_{k})$.\\
If $\sign(x_{k}+y_{k})\neq\sign(x_{k})$, then $\sign(y_{k})\neq\sign(x_{k})$ and $|y_{k}|\geq s+|x_{k}|$. It follows that $-x_{k}\sat(x_{k}+y_{k})=|x_{k}|s$ and $-x_{k}\sat(y_{k})=-x_{k}\cdot s\cdot\sign(y_{k})=|x_{k}|s$. Hence, $-x_{k}\sat(x_{k}+y_{k})\leq -x_{k}\sat(y_{k})$.
\end{itemize}
\end{proof}

\lem{lem6} and the definitions of $\fbm{s}_{i}$ and $\fbm{e}_{i}$ lead to:
\begin{align*}
&\dothattbm{q}_{i}\Sat^{pd}_{i}(\fbm{e}_{i})-\tbm{s}_{i}\Sat^{pd}_{i}(\mu_{i}\fbm{s}_{i}+\fbm{e}_{i})+\dotttilbm{q}_{i}\Sat^{pd}_{i}(p_{i}\tilbm{q}_{i})\\
\leq&\left(\dothatbm{q}_{i}-\fbm{s}_{i}+\dottilbm{q}_{i}\right)^\mathsf{T}\Sat^{pd}_{i}(\fbm{e}_{i})=-\alpha\tbm{e}_{i}\Sat^{pd}_{i}(\fbm{e}_{i})\textrm{,}
\end{align*}
which, in turn, yields the following upper-bounding of $\dot{V}$:
\begin{equation}\label{equ35}
\begin{aligned}
\dot{V}\leq& -\sum^{N}_{i=1}\dothattbm{q}_{i}\sum_{j\in\mathcal{N}_{i}(0)}\left(\nabla_{i}\psi(\|\hbm{q}_{ijd}\|)-\nabla_{i}\psi(\|\hbm{q}_{ij}\|)\right)\\
& -\alpha\sum^{N}_{i=1}\tbm{e}_{i}\Sat^{pd}_{i}(\fbm{e}_{i})-\sum^{N}_{i=1}k_{i}\dothattbm{q}_{i}\dothatbm{q}_{i}\textrm{.}
\end{aligned}
\end{equation}

Another assumption used to investigate the connectivity-preserving consensus of uncertain Euler-Lagrange MAS-s that start from rest and have bounded actuation and time-varying communication delays is the following.
\begin{assumption}\label{ass6}
Let the potential function~\eqref{equ1} have a fourth property: if $|\hbm{q}^{k}_{ij}|$ and $|\hbm{q}^{k}_{ijd}|$ are bounded for all $k=1,\cdots,n$, then there exist $\zeta>0$ and $\vartheta>0$ such that
\begin{align*}
\big|\nabla^{k}_{i}\psi(\|\hbm{q}_{ij}\|)-\nabla^{k}_{i}\psi(\|\hbm{q}_{ijd}\|)\big|\leq\zeta |\hat{q}^{k}_{j}-\hat{q}^{k}_{jd}|+\vartheta\bar{q}_{j}
\end{align*}
with $\bar{q}_{j}=\|\hbm{q}_{j}-\hbm{q}_{jd}\|_{\infty}$.
\end{assumption}
\rem{rem11}, proven in~\cite{Mine2018}, shows how to satisfy \ass{ass6}. 

The treatment of time-varying delays is also facilitated by the following lemma.
\begin{lemma}\label{lem7}\cite{Mine2018}
Given $\dothatbm{q}_{i}$, $\overline{q}_{j}$ in~\ass{ass6}, and any variable time delays $T(t)$ bounded by $0\leq T(t)\leq \overline{T}$, they obey
\begin{align*}
&\int^{t}_{0}\dothattbm{q}_{i}(\sigma)\overline{q}_{j}(\sigma)\fbm{1}d\sigma\\
&\leq\frac{a}{2}\int^{t}_{0}\|\dothatbm{q}_{i}(\sigma)\|^{2}d\sigma +\frac{n\overline{T}^{2}}{2a}\int^{t}_{0}\|\dothatbm{q}_{j}(\sigma)\|^{2}d\sigma
\end{align*}
for any $a>0$.
\end{lemma}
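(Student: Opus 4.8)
The plan is to turn the delay-induced mismatch $\overline{q}_{j}$ into a time integral of the proxy velocity $\dothatbm{q}_{j}$ over the delay window, and then dispatch the resulting cross term with the Cauchy--Schwarz and Young inequalities followed by an interchange in the order of integration. The whole argument is an energy-type bookkeeping estimate, so no deep idea is needed beyond setting the integral limits correctly.

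First I would rewrite the mismatch using the fundamental theorem of calculus. Since $\hbm{q}_{jd}(\sigma)=\hbm{q}_{j}(\sigma-T_{ji}(\sigma))$, each component obeys
\[
\hat{q}^{k}_{j}(\sigma)-\hat{q}^{k}_{jd}(\sigma)=\int_{\sigma-T_{ji}(\sigma)}^{\sigma}\dot{\hat{q}}^{k}_{j}(s)\,ds .
\]
Using $0\leq T_{ji}(\sigma)\leq\overline{T}$ and $|\dot{\hat{q}}^{k}_{j}(s)|\leq\|\dothatbm{q}_{j}(s)\|$, taking the infinity norm over $k$ gives
\[
\overline{q}_{j}(\sigma)=\|\hbm{q}_{j}(\sigma)-\hbm{q}_{jd}(\sigma)\|_{\infty}\leq\int_{\sigma-\overline{T}}^{\sigma}\|\dothatbm{q}_{j}(s)\|\,ds .
\]
Here the pre-history $\dothatbm{q}_{j}(s)$ for $s<0$ is the zero signal, which is consistent with \ass{ass4} and the initialization $\dothatbm{q}_{i}(0)=\fbm{0}$, so the window reaching below the initial time causes no trouble.

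Second I would bound the integrand of the left-hand side. Because $\overline{q}_{j}\geq 0$ is scalar and $\fbm{1}$ is the all-ones vector, Cauchy--Schwarz yields $\dothattbm{q}_{i}(\sigma)\,\fbm{1}\leq\sqrt{n}\,\|\dothatbm{q}_{i}(\sigma)\|$, hence $\dothattbm{q}_{i}(\sigma)\,\overline{q}_{j}(\sigma)\,\fbm{1}\leq\sqrt{n}\,\|\dothatbm{q}_{i}(\sigma)\|\,\overline{q}_{j}(\sigma)$. Applying Young's inequality $xy\leq\tfrac{a}{2}x^{2}+\tfrac{1}{2a}y^{2}$ with $x=\|\dothatbm{q}_{i}\|$ and $y=\sqrt{n}\,\overline{q}_{j}$ gives $\sqrt{n}\,\|\dothatbm{q}_{i}\|\,\overline{q}_{j}\leq\tfrac{a}{2}\|\dothatbm{q}_{i}\|^{2}+\tfrac{n}{2a}\overline{q}_{j}^{2}$, so after integrating over $[0,t]$ it only remains to show $\int_{0}^{t}\overline{q}_{j}(\sigma)^{2}\,d\sigma\leq\overline{T}^{2}\int_{0}^{t}\|\dothatbm{q}_{j}(s)\|^{2}\,ds$.

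Third, for this last estimate I would square the step-one bound and invoke Cauchy--Schwarz on the window of length at most $\overline{T}$ to obtain $\overline{q}_{j}(\sigma)^{2}\leq\overline{T}\int_{\sigma-\overline{T}}^{\sigma}\|\dothatbm{q}_{j}(s)\|^{2}\,ds$, and then interchange the order of integration: for a fixed $s$, the outer variable $\sigma$ ranges over an interval of length at most $\overline{T}$, so $\int_{0}^{t}\!\int_{\sigma-\overline{T}}^{\sigma}\|\dothatbm{q}_{j}(s)\|^{2}\,ds\,d\sigma\leq\overline{T}\int_{0}^{t}\|\dothatbm{q}_{j}(s)\|^{2}\,ds$. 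Assembling the three pieces reproduces exactly the claimed bound with constants $\tfrac{a}{2}$ and $\tfrac{n\overline{T}^{2}}{2a}$. The only delicate point I anticipate is this change in the order of integration near $\sigma=0$, where the delay window dips below the initial instant; extending $\dothatbm{q}_{j}$ by zero on $(-\infty,0)$ — legitimate since the system starts from rest — makes the Fubini swap and all the limits clean, and the remainder is routine.
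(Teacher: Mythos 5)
Your proof is correct. The paper itself gives no proof of Lemma~\ref{lem7}, deferring entirely to the cited reference~\cite{Mine2018}, but your argument is precisely the standard one used there and in the delay literature the paper builds on (cf.\ Lemma~1 of~\cite{Nuno2009}): write the delay-induced mismatch as $\int_{\sigma-T_{ji}(\sigma)}^{\sigma}\dot{\hat{q}}^{k}_{j}(s)\,ds$, bound the cross term by Young's inequality, and close with Cauchy--Schwarz on the delay window plus an interchange of the order of integration, which is where the factor $\overline{T}^{2}$ and the dimension factor $n$ arise. Your handling of the pre-history by extending $\dot{\hat{\fbm{q}}}_{j}$ by zero on $(-\infty,0)$, consistent with \ass{ass4} and the initialization $\dot{\hat{\fbm{q}}}_{j}(0)=\fbm{0}$, correctly disposes of the only delicate point near $\sigma=0$.
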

\ass{ass6}, Lemma 1 in~\cite{Nuno2009} and~\lem{lem7} lead to:
\begin{align*}
&-\int^{t}_{0}\dothattbm{q}_{i}(\sigma)\left[\nabla_{i}\psi(\|\hbm{q}_{ijd}(\sigma)\|)-\nabla_{i}\psi(\|\hbm{q}_{ij}(\sigma)\|)\right]d\sigma\\
&\leq \zeta\int^{t}_{0}|\dothatbm{q}_{i}(\sigma)|^\mathsf{T}\int^{\sigma}_{\sigma-T_{ji}(\sigma)}|\dothatbm{q}_{j}(\theta)|d\theta d\sigma + \vartheta\int^{t}_{0}\dothattbm{q}_{i}(\sigma)\overline{q}_{j}(\sigma)\fbm{1}d\sigma\\
&\leq \frac{a(\zeta+\vartheta)}{2}\int^{t}_{0}\|\dothatbm{q}_{i}(\sigma)\|^{2}d\sigma+\frac{(\zeta+n\vartheta)\overline{T}^{2}_{ji}}{2a}\int^{t}_{0}\|\dothatbm{q}_{j}(\sigma)\|^{2}d\sigma\textrm{,}
\end{align*}
where $\overline{T}_{ji}$ is the upper bound of $T_{ji}(t)$. Further, time integration of~\eqref{equ35} yields:
\begin{align*}
&V(t)\leq V(0)-\sum^{N}_{i=1}\int^{t}_{0}\alpha\tbm{e}_{i}(\sigma)\Sat^{pd}_{i}\left[\fbm{e}_{i}(\sigma)\right]+k_{i}\|\dothatbm{q}_{i}(\sigma)\|^{2}d\sigma\\
&+\sum^{N}_{i=1}\sum_{j\in\mathcal{N}_{i}(0)}\left[\frac{a(\zeta+\vartheta)}{2}+\frac{(\zeta+n\vartheta)\overline{T}^{2}_{ij}}{2a}\right]\int^{t}_{0}\|\dothatbm{q}_{i}(\sigma)\|^{2}d\sigma\textrm{.}
\end{align*}
If the damping injection $k_{i}$ in the proxies is sufficiently large:
\begin{align*}
k_{i}\geq\sum_{j\in\mathcal{N}_{i}(0)}\left(\frac{a(\zeta+\vartheta)}{2}+\frac{(\zeta+n\vartheta)\overline{T}^{2}_{ij}}{2a}\right)+\upsilon_{i}
\end{align*}
with $\upsilon_{i}>0$ and $i=1\cdots,N$, it follows that:
\begin{align*}
V(t)\leq V(0)-\sum^{N}_{i=1}\int^{t}_{0}\alpha\tbm{e}_{i}(\sigma)\Sat^{pd}_{i}\left[\fbm{e}_{i}(\sigma)\right]+\upsilon_{i}\|\dothatbm{q}_{i}(\sigma)\|^{2}d\sigma\textrm{.}
\end{align*}

After selecting $\hbm{q}_{i}(0)=\fbm{q}_{i}(0)$, $\dothatbm{q}_{i}(0)=\fbm{0}$ and $\underline{{\theta}}^{k}_{i}\leq {\theta}^{k}_{i}(0)\leq\overline{{\theta}}^{k}_{i}$ for $k=1,\cdots,n$, \ass{ass4} leads to $\dotbm{q}_{i}=\fbm{0}$ and, further, to $\fbm{s}_{i}=\fbm{0}$, $\tilbm{q}_{i}(0)=\fbm{0}$ and $\Phi_{i}(\tilbm{q}_{i}(0))=0$. It then follows that: 
\begin{align*}
V(0)=\sum^{N}_{i=1}\frac{1}{2\beta_{i}}\ttilfbm{\theta}_{i}(0)\tilfbm{\theta}_{i}(0)+\Psi(\hbm{q}(0))\textrm{.}
\end{align*}
As in~\sect{sec: saturations}, let $\phi^{*}=\min \limits_{i=1\cdots,N}(\phi^{*}_{i})$ with $\phi^{*}_{i}$ the minimum of $\phi_{i}(\tilbm{q}_{i})$ on the boundary of $B(\fbm{0},\frac{\epsilon}{3})$. Further, choose a bounded potential function $\Psi(\hbm{q})$ that: 1) attains its maximum $\Psi_{max}=\phi^{*}$ whenever there is an edge $(i,j)\in\mathcal{E}(0)$ such that $\|\hbm{q}_{ij}\|=\hat{r}=r-\frac{2\epsilon}{3}$; 2) and obeys $\Psi(\hbm{q}(0))<\Psi_{max}$. There then exists $\beta_{i}>0$ such that:
\begin{align*}
\sum^{N}_{i=1}\frac{1}{2\beta_{i}}\ttilfbm{\theta}_{i}(0)\tilfbm{\theta}_{i}(0)<\delta=\Psi_{max}-\Psi(\hbm{q}(0))\textrm{,}
\end{align*} 
which, in turn, leads to $V(t)\leq V(0)<\phi^{*}$ and, further, to $\phi_{i}(\tilbm{q}_{i}(t))<\phi^{*}$ and to $\Psi(\hbm{q}(t))<\phi^{*}$. Then, it follows that $\|\hbm{q}_{ij}(t)\|<\hat{r}$ for each $(i,j)\in\mathcal{E}(0)$, and that $\|\tilbm{q}_{i}(t)\|<\frac{\epsilon}{3}$ for each $i=1,\cdots,N$, which together imply that $\|\fbm{q}_{ij}(t)\|\leq\|\tilbm{q}_{i}(t)\|+\|\hbm{q}_{ij}(t)\|+\|\tilbm{q}_{j}(t)\|<r$ for every link $(i,j)\in\mathcal{E}(0)$, i.e., the initial connectivity of the MAS is maintained.

Time integration of $\dot{V}$ implies that $\fbm{e}_{i}\in\mathcal{L}_{2}\cap\mathcal{L}_{\infty}$ and $\dothatbm{q}_{i}\in\mathcal{L}_{2}\cap\mathcal{L}_{\infty}$, $i=1,\cdots,N$, which, in turn, imply that $\fbm{e}_{i}=\fbm{q}_{i}-\hbm{q}_{i}\to\fbm{0}$ and that $\dothatbm{q}_{i}\to\fbm{0}$. From~\eqref{equ32}, it follows that $\dddothbm{q}_{i}\in\mathcal{L}_{\infty}$ and, further, that $\sum_{j\in\mathcal{N}_{i}(0)}\nabla_{i}\psi(\|\hbm{q}_{ijd}\|)\to\sum_{j\in\mathcal{N}_{i}(0)}\nabla_{i}\psi(\|\hbm{q}_{ij}\|)\to\fbm{0}$ because $\dotbm{q}_{j}\to\fbm{0}$. As in~\sect{sec: single-integrator}, this means that $\hbm{q}_{i}-\hbm{q}_{j}\to\fbm{0}$, i.e., all agents converge to the same configuration and achieve consensus.

\begin{remark}
\normalfont In~\eqref{equ32}, $\hbm{f}_{i}$ includes: a dynamic compensation term $\bm{\Phi}_{i}(\fbm{q}_{i},\dotbm{q}_{i},\fbm{e}_{i},\dotbm{e}_{i})\hfbm{\theta}_{i}$ similar to the dynamic compensation term in~\eqref{equ23}; and a saturated term $\Sat^{pd}_{i}(\mu_{i}\fbm{s}_{i}+\fbm{e}_{i})$ which is a bounded version of $\fbm{e}_{i}$ that eliminates the need to analyze saturated agent-proxy couplings. All analysis in this section holds even without including $\mu_{i}\fbm{s}_{i}$ in $\Sat^{pd}_{i}(\cdot)$ in~\eqref{equ32}. However, rewriting $\mu_{i}\fbm{s}_{i}+\fbm{e}_{i}$ in the P+d form $\mu_{i}\dotbm{q}_{i}+(\alpha\mu_{i}+1)\fbm{e}_{i}$ illustrates that $\mu_{i}\fbm{s}_{i}$ provides flexibility in tuning the damping injection in the designed agent controls $\hbm{f}_{i}$.
\end{remark}

\section{Simulations}\label{sec: simulations}
In this section, simulations of kinematic and EL networks adopt the potential function in~\eqref{equ3} and in~\rem{rem11}, respectively. 

\subsection{Kinematic MAS-s}
This section starts with a comparison of the controllers in~\eqref{equ6} and~\cite{Khorasani2013ACC} through simulations of a single-integrator MAS with $N=5$ agents with: dimension $n=1$; communication radius $r=1$~m; actuation bounds $2$~m/s, $3$~m/s, $1$~m/s, $2$~m/s and $3$~m/s; and initial positions $1$~m, $1.5$~m, $2$~m, $2.5$~m and $3$~m. For the controller in~\eqref{equ6}, the third property of~$\Psi(\fbm{q})$ is guaranteed by choosing $Q=0.025$ after selecting $\epsilon=0.1$~m. For the controller in~\cite{Khorasani2013ACC}, the design parameters are $\alpha_{i}=1$, $\epsilon_{i}=0.1$ and $\beta_{i}=11$.

\figsa{fig11a}{fig11b} illustrate that the MAS is coordinated in about $0.7$~s by the controller in~\eqref{equ6}, and in about $10$~s by the controller in~\cite{Khorasani2013ACC}. The difference in the coordination performance arises from a dynamic compensation mechanism in~\cite{Khorasani2013ACC} that prevents actuator saturation, as seen in \fig{fig11d}, and thus makes the control conservative and limits convergence speed. In contrast, the controller~\eqref{equ6} permits simultaneous saturation of several actuators, as shown in~\fig{fig11c}, and more fully uses the actuation of the agents.
\begin{figure}[!hbt]
\centering
\subfigure[Coordination of a $5$ single-integrator MAS controlled by~\eqref{equ6}.]{
	\label{fig11a}
	\includegraphics[width=4cm,height=2.5cm]{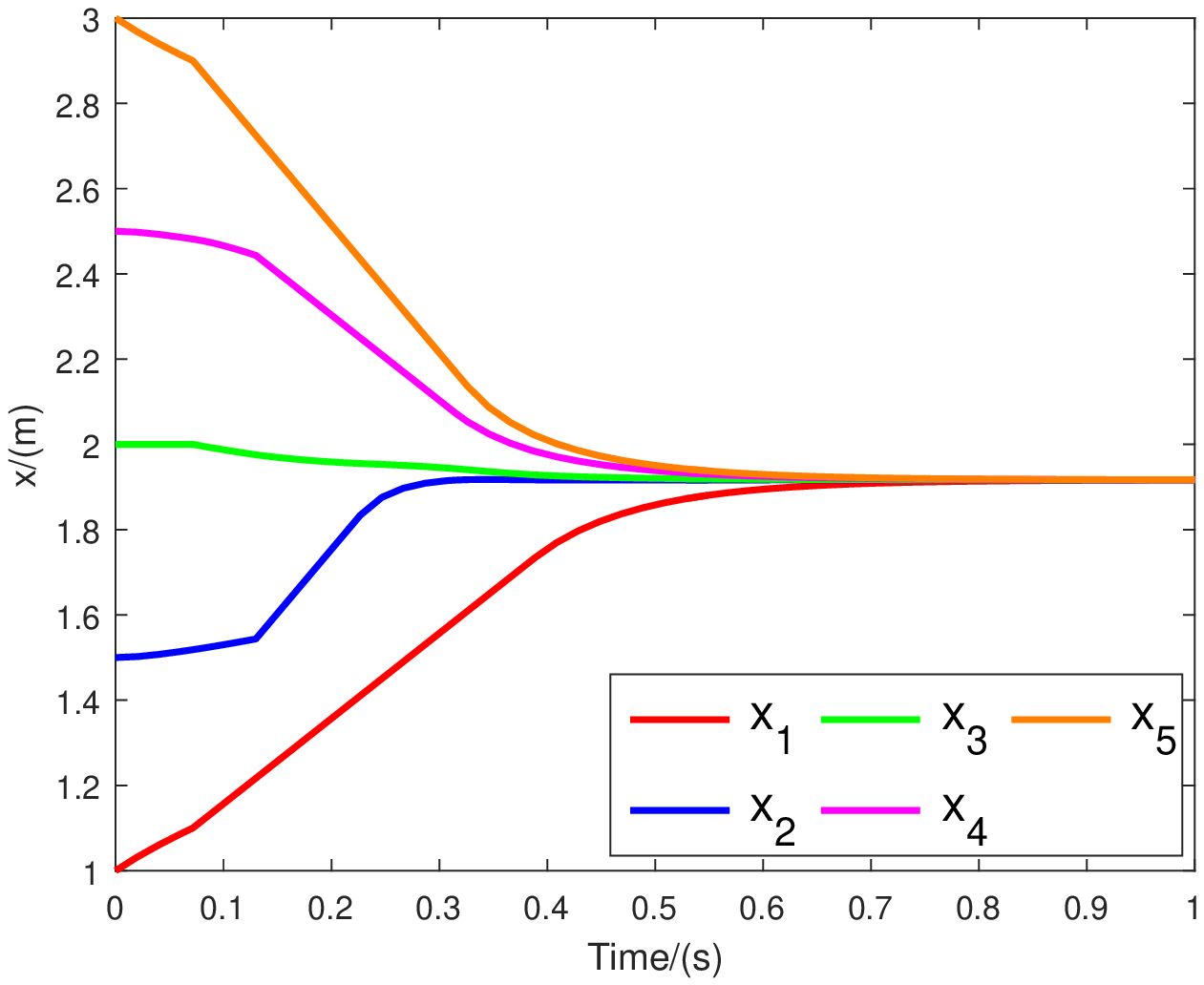}}\quad
\subfigure[Coordination of a $5$ single-integrator MAS controlled by~\cite{Khorasani2013ACC}.]{
	\label{fig11b}
	\includegraphics[width=4cm,height=2.5cm]{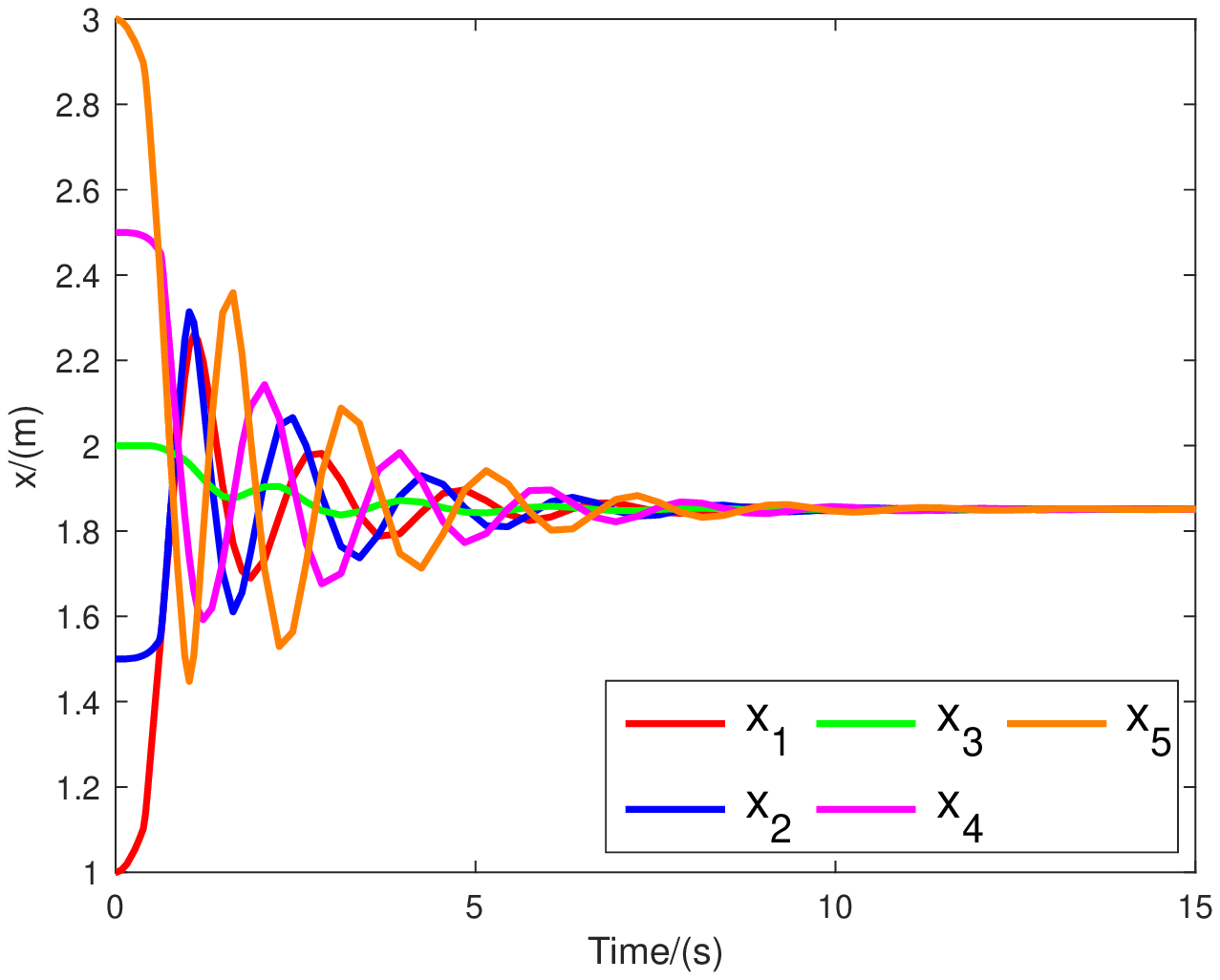}}	
	
\subfigure[Saturated actuations of a $5$ single-integrator MAS controlled by~\eqref{equ6}.]{
	\label{fig11c}
	\includegraphics[width=4cm,height=2.5cm]{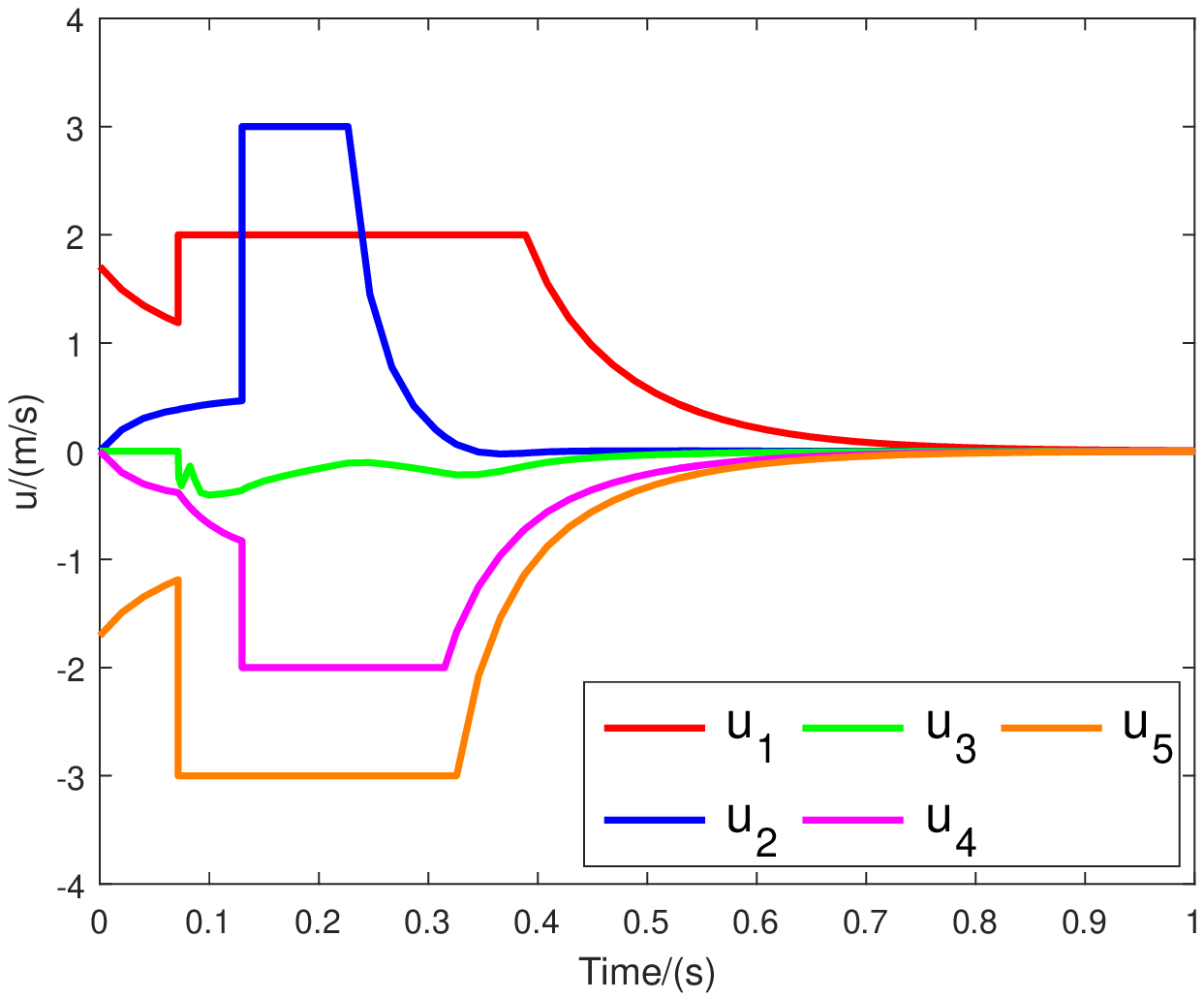}}\quad
\subfigure[Saturated actuations of a $5$ single-integrator MAS controlled by~\cite{Khorasani2013ACC}.]{
	\label{fig11d}
	\includegraphics[width=4cm,height=2.5cm]{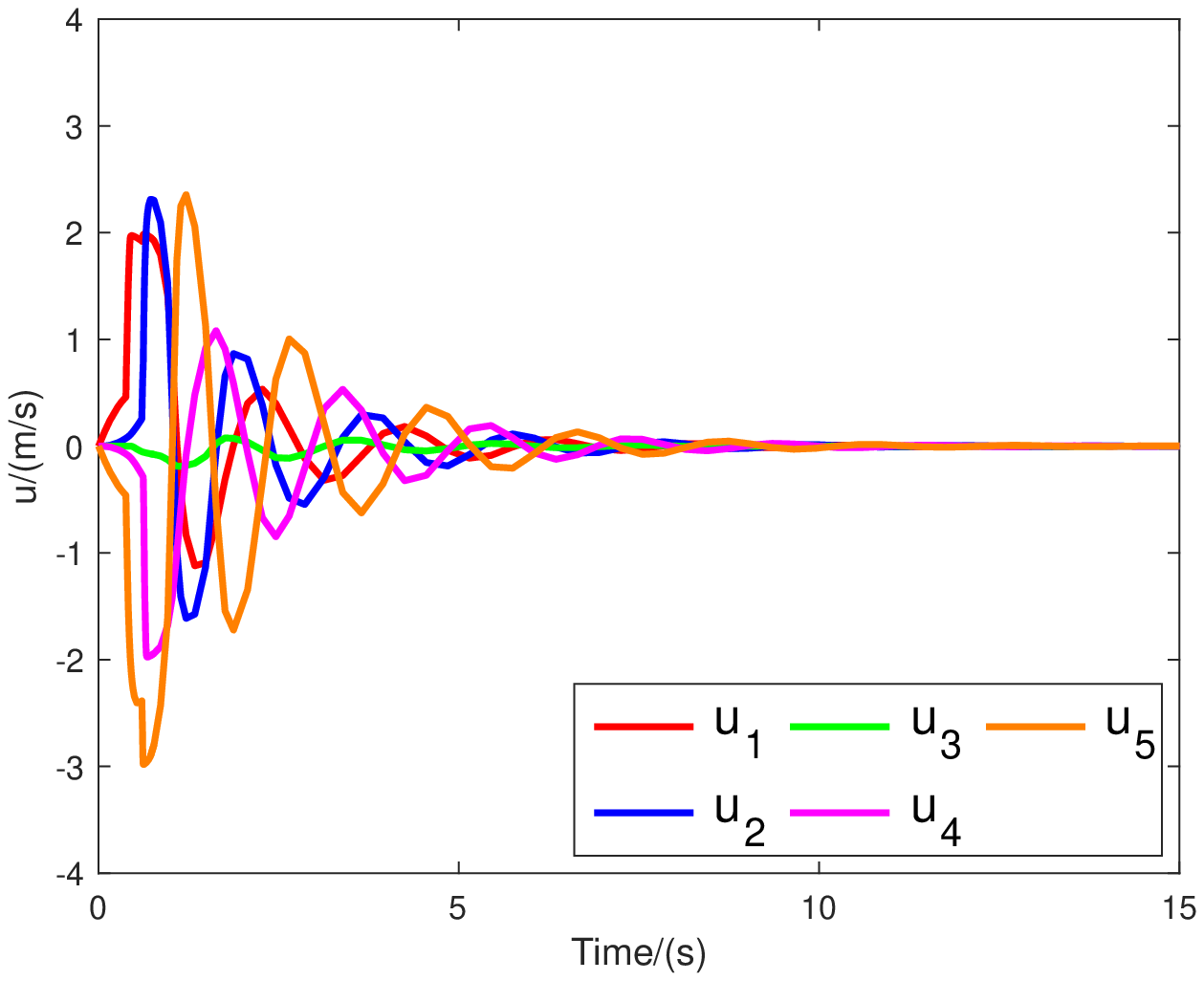}}
\caption{Comparisons of coordination speeds~(in (a) and (b)) and actuations~(in (c) and (d)) between the controller~\eqref{equ6} and the controller in~\cite{Khorasani2013ACC}.}
\label{fig11}
\end{figure}

A second simulation depicts a MAS with $5$ nonholonomic agents under the control~\eqref{equ12}.  The agents have: communication radius $r=1$~m; translation actuation bounds $\overline{s}_{vi}$ equal to $0.5$~m/s, $0.3$~m/s, $0.4$~m/s, $0.2$~m/s and $0.4$~m/s; orientation actuation bounds $\overline{s}_{\omega i}$ equal to $2$~rad/s, $3$~rad/s, $1$~rad/s, $2$~rad/s and $3$~rad/s; and initial configurations $[0.5\text{~m}, 0.5\text{~m}, -0.4\pi]^\mathsf{T}$, $[1\text{~m}, 0\text{~m}, 0.7\pi]^\mathsf{T}$, $[2\text{~m}, 1\text{~m}, 0.5\pi]^\mathsf{T}$, $[0.5\text{~m}, 1\text{~m}, -0.6\pi]^\mathsf{T}$ and $[1.5\text{~m}, 0.5\text{~m}, 0.1\pi]^\mathsf{T}$. After selecting $\epsilon=0.1$~m, the third property of~$\Psi(\fbm{q})$ is guaranteed by choosing $Q=0.025$. The gain of the orientation controller in~\eqref{equ12} is chosen $k=2$ heuristically. The trajectories and translation actuation signals of all agents are depicted in~\figsa{fig12a}{fig12b}, respectively. These figures verify that the controller~\eqref{equ12} coordinates the nonholonomic MAS although all agent actuators saturate during various time periods.
\begin{figure}[!hbt]
\centering
\subfigure[Position coordination of $5$ nonholonomic agents controlled by~\eqref{equ12}. The black lines connect the initially adjacent agents.]{
	\label{fig12a}
	\includegraphics[width=4cm,height=3cm]{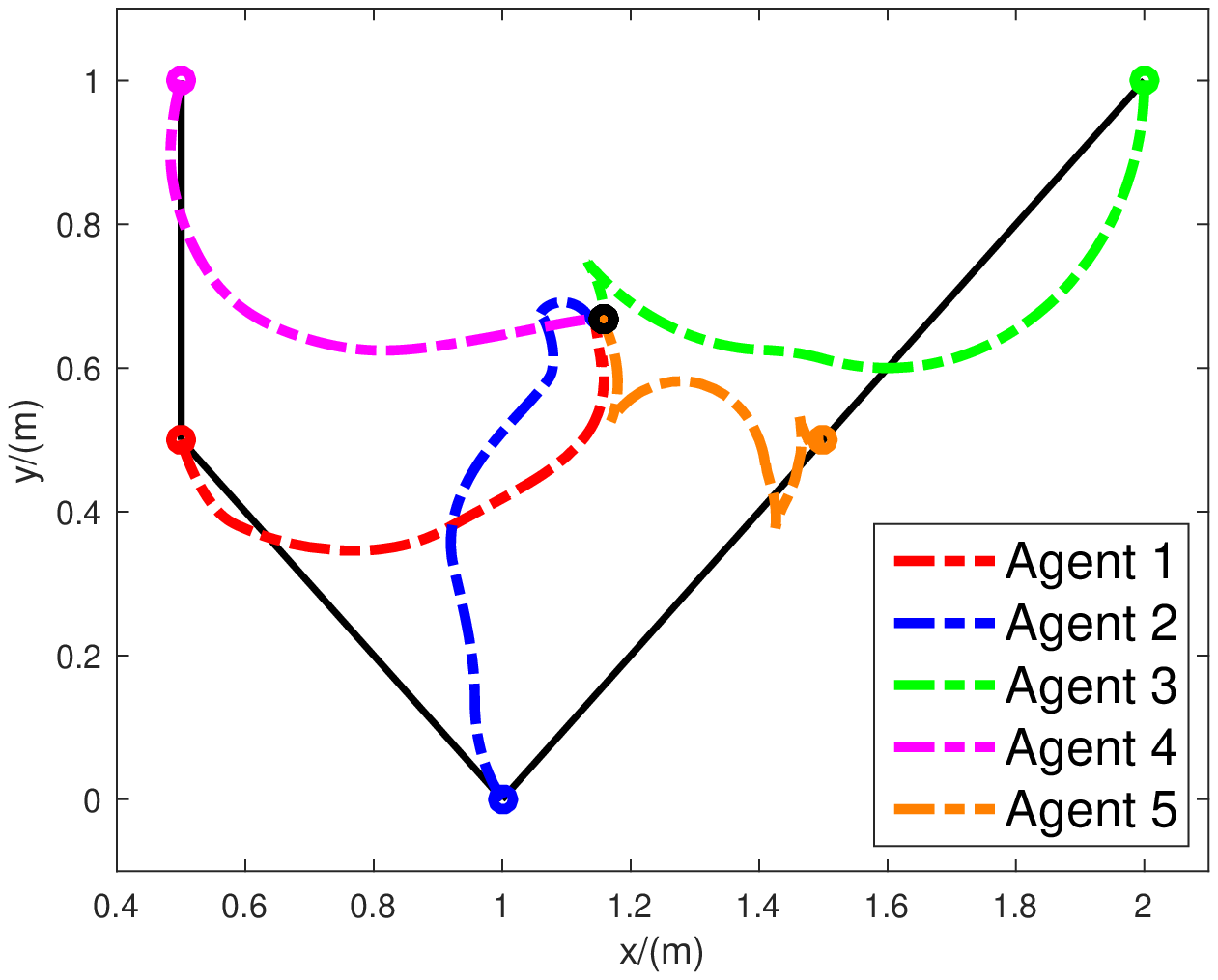}}\quad 
\subfigure[Saturated translation actuations of $5$ nonholonomic agents controlled by~\eqref{equ12}.]{
	\label{fig12b}
	\includegraphics[width=4cm,height=3cm]{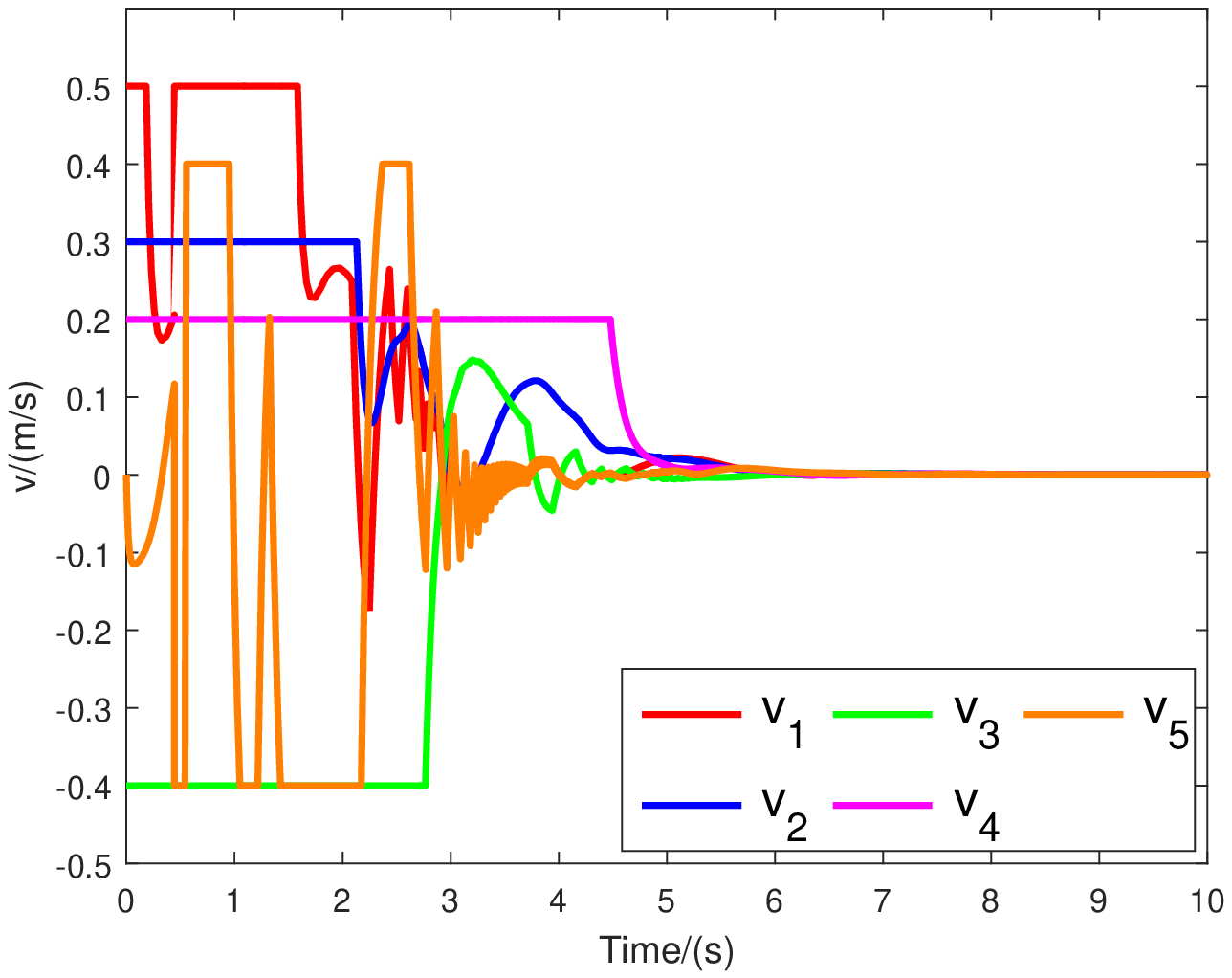}}
\caption{Position synchronization of a MAS with $5$ nonholonomic agents under the controller~\eqref{equ12}, regardless of actuator saturation.}
\label{fig12}
\end{figure}

\subsection{Euler-Lagrange MAS-s}
This section validates that the controllers designed in~\sect{sec: EL} preserve the local connectivity of a simulated MAS with $N=5$ robots despite bounded actuation. Each robot is a $2$-degree-of-freedom~($2$-DOF) manipulator with link masses $m_{k}=0.5$~kg and lengths $l_{k}=1$~m. In task space, the communication radius of the end effector of each robot is $r=1$~m. The robots are initially at rest at $\fbm{q}_{1}=[\pi/12, -5\pi/12]^\mathsf{T}$, $\fbm{q}_{2}=[\pi/6, -\pi/3]^\mathsf{T}$, $\fbm{q}_{3}=[\pi/4, -\pi/4]^\mathsf{T}$, $\fbm{q}_{4}=[\pi/3, -\pi/4]^\mathsf{T}$ and $\fbm{q}_{5}=[5\pi/12, -5\pi/12]^\mathsf{T}$. Selecting $\epsilon=0.25$~m guarantees \ass{ass2}. To preserve connectivity and coordinate the end effectors, the robot controllers are designed in task space.    

For the output feedback control~\eqref{equ19}, $\dothatbm{q}_{i}(0)=\fbm{0}$ can be guaranteed by choosing $\hbm{q}_{i}(0)=\kappa_{i}\fbm{q}_{i}(0)$. Letting $Q=0.2$ ensures $V(0)\leq\Psi_{max}$ in~\eqref{equ20}. Then, $s_{i}=5$ and $\kappa_{i}=12$ are selected heuristically. The positions of the $5$ end effectors along the $x$- and $y$-axes are depicte in~\figsa{fig13a}{fig13b}, respectively. The convergence of the paths of the $5$ end effectors is shown in~\fig{fig14a}. Velocity estimation causes the end effectors to twist and turn during coordination. Increased damping in the proxies can smooth the end effector paths at the expense of convergence speed.
\begin{figure}[!hbt]
\centering
\subfigure[End effector positions in task space along the $x$-axis.]{
	\label{fig13a}
	\includegraphics[width=4cm,height=2.5cm]{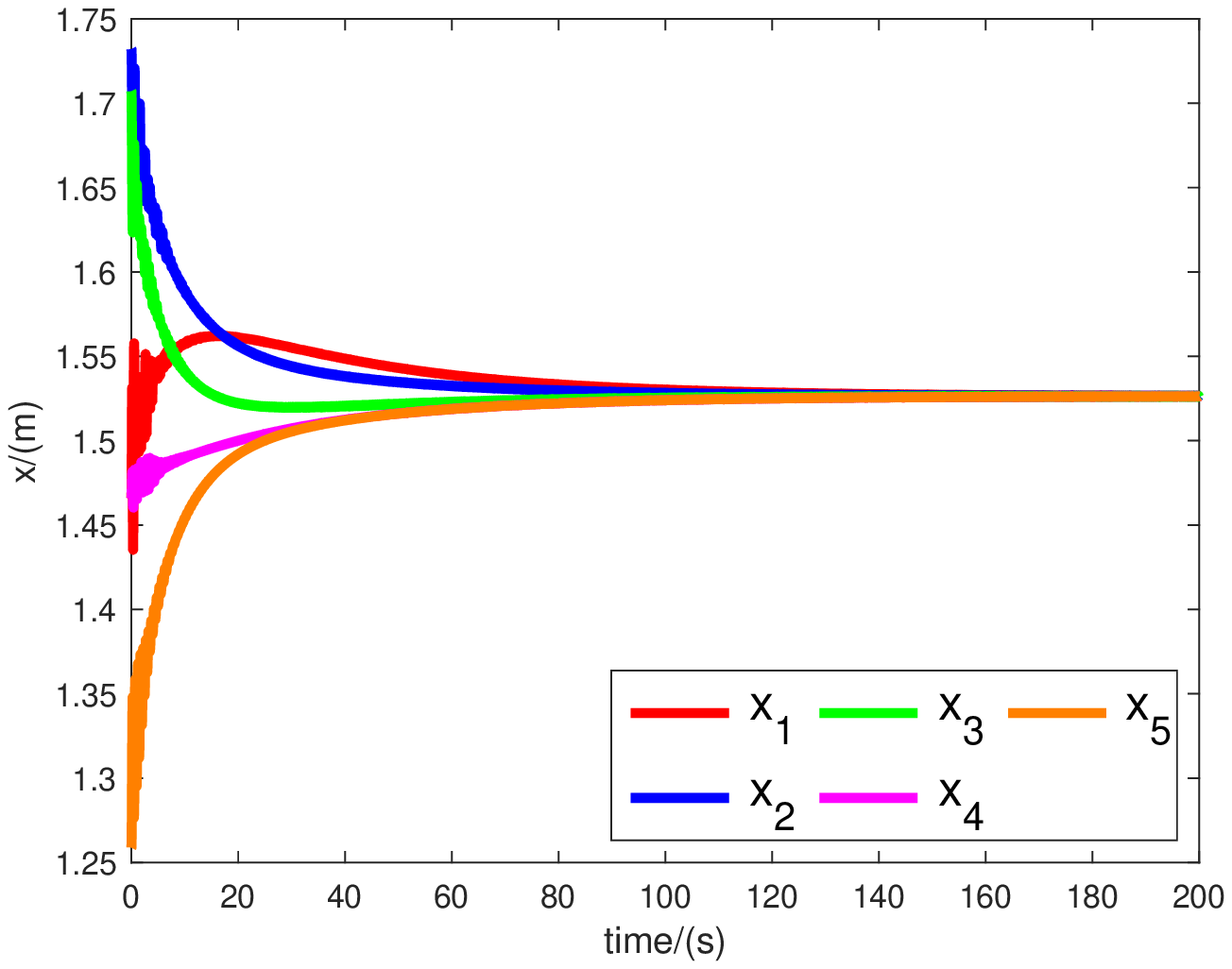}}\quad
\subfigure[End effector positions in task space along the $y$-axis.]{
	\label{fig13b}
	\includegraphics[width=4cm,height=2.5cm]{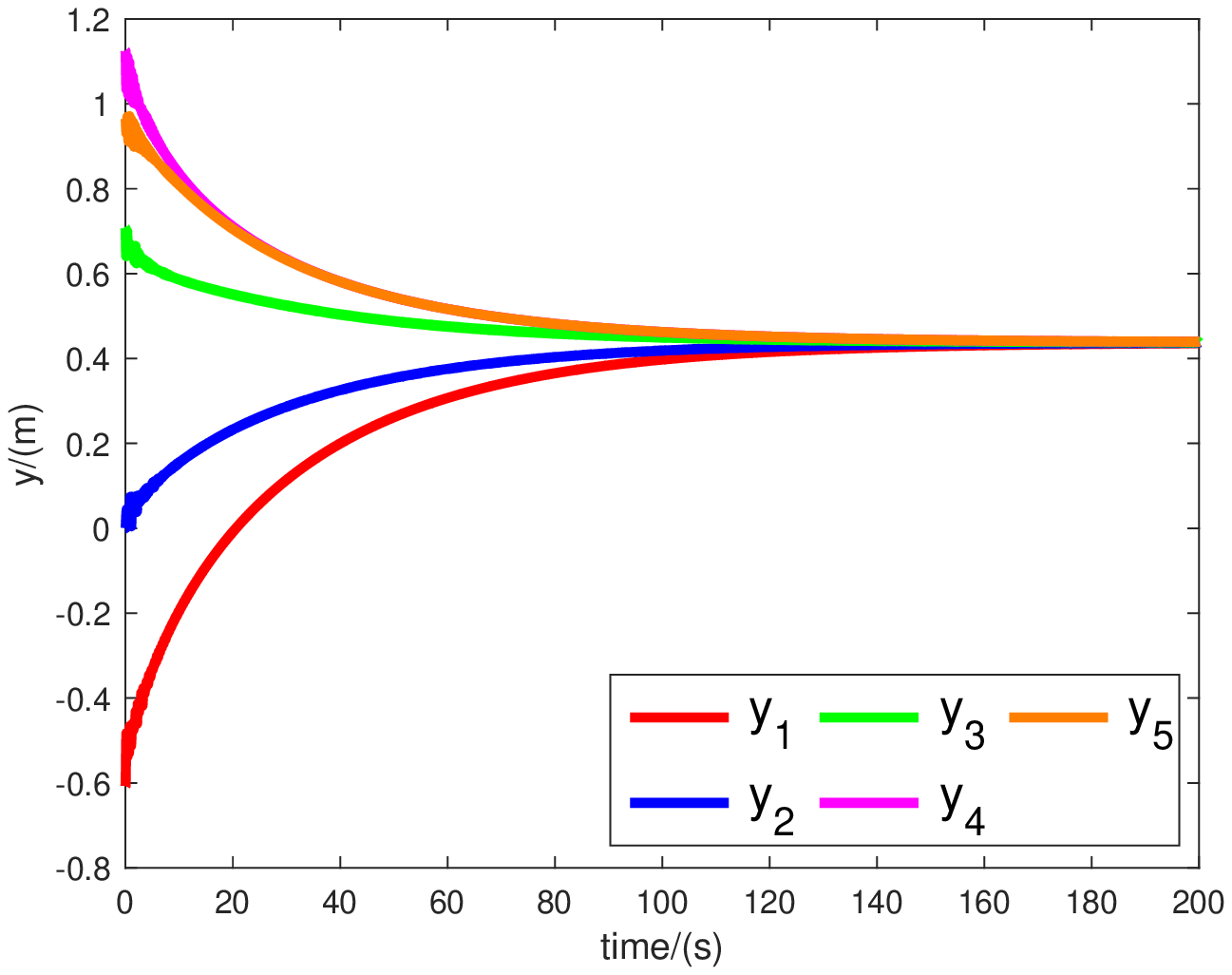}}
\caption{End effector positions of a $5$-robots MAS under the output feedback controller~\eqref{equ19}.}
\label{fig13}
\end{figure}
\begin{figure}[!hbt]
\centering
\subfigure[End effector task space paths under output feedback control.]{
	\label{fig14a}
	\includegraphics[width=4cm,height=3cm]{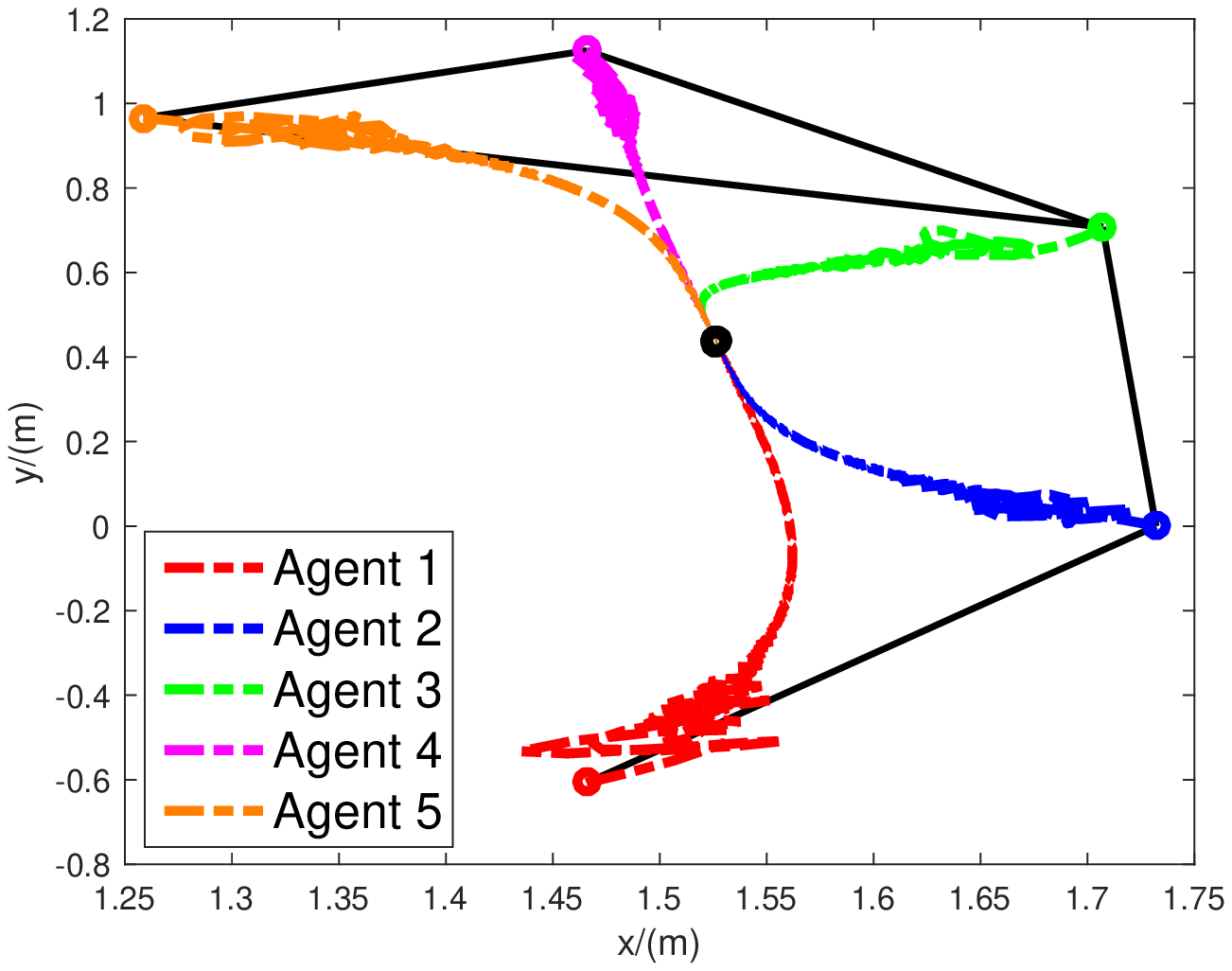}}\quad
\subfigure[End effector task space paths under adaptive control.]{
	\label{fig14b}
	\includegraphics[width=4cm,height=3cm]{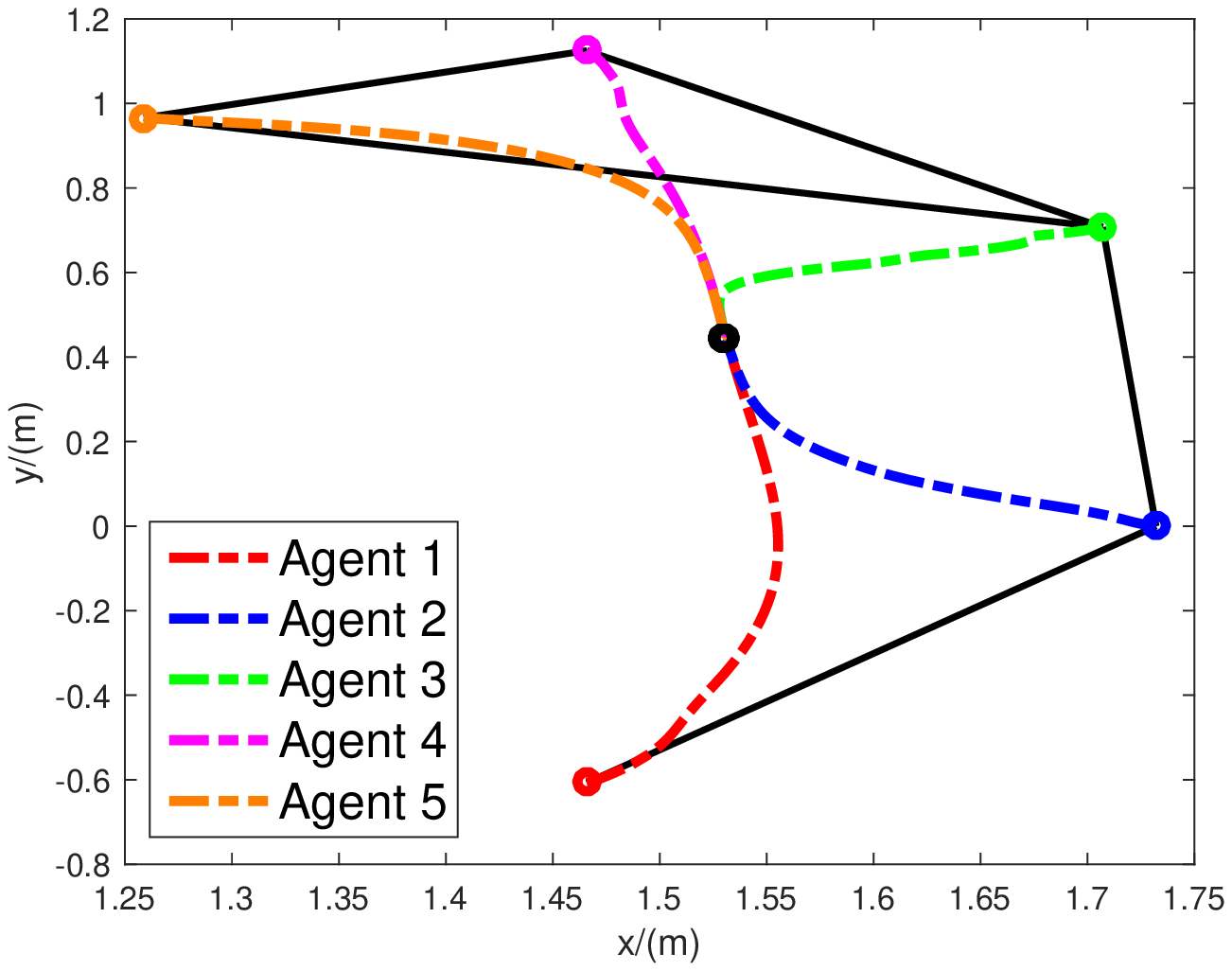}}
\caption{Consensus of the end effectors of a $5$-robots MAS under output feedback control~\eqref{equ19} and under adaptive control~\eqref{equ23}. Black lines connect the initially adjacent end-effectors.}
\label{fig14}
\end{figure}

Assuming that the parameters of the $5$-robots MAS are unknown, the adaptive controller~\eqref{equ23} can be designed by: 1) selecting $Q=0.1$ such that $\Psi(\fbm{q}(0))<\Psi_{max}$; 2) setting $\alpha=1$ and $\kappa=50$ heuristically; and 3) letting $\mu_{i}=5$ be sufficiently large. Then, $V(0)\leq\Psi_{max}$ is guaranteed. The convergence of the paths of all end effectors to the same point is depicted in~\fig{fig14b}. Detailed position information along $x$- and $y$-axes is displayed in~\fig{fig15}.
\begin{figure}[!hbt]
\centering
\subfigure[End effector positions in task space along $x$-axis.]{
	\label{fig15a}
	\includegraphics[width=4cm,height=2.5cm]{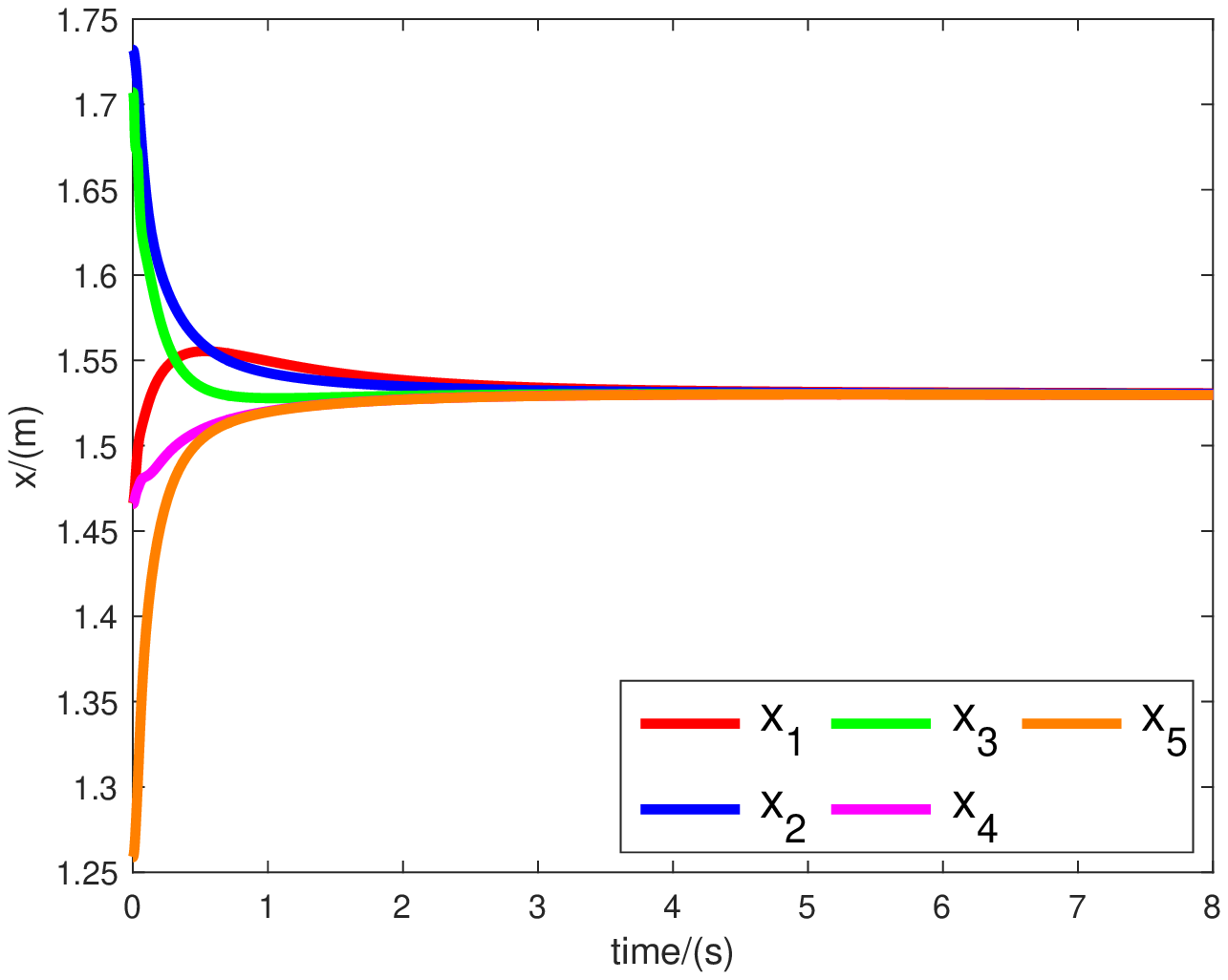}}
\subfigure[End effector positions in task space along $y$-axis.]{
	\label{fig15b}
	\includegraphics[width=4cm,height=2.5cm]{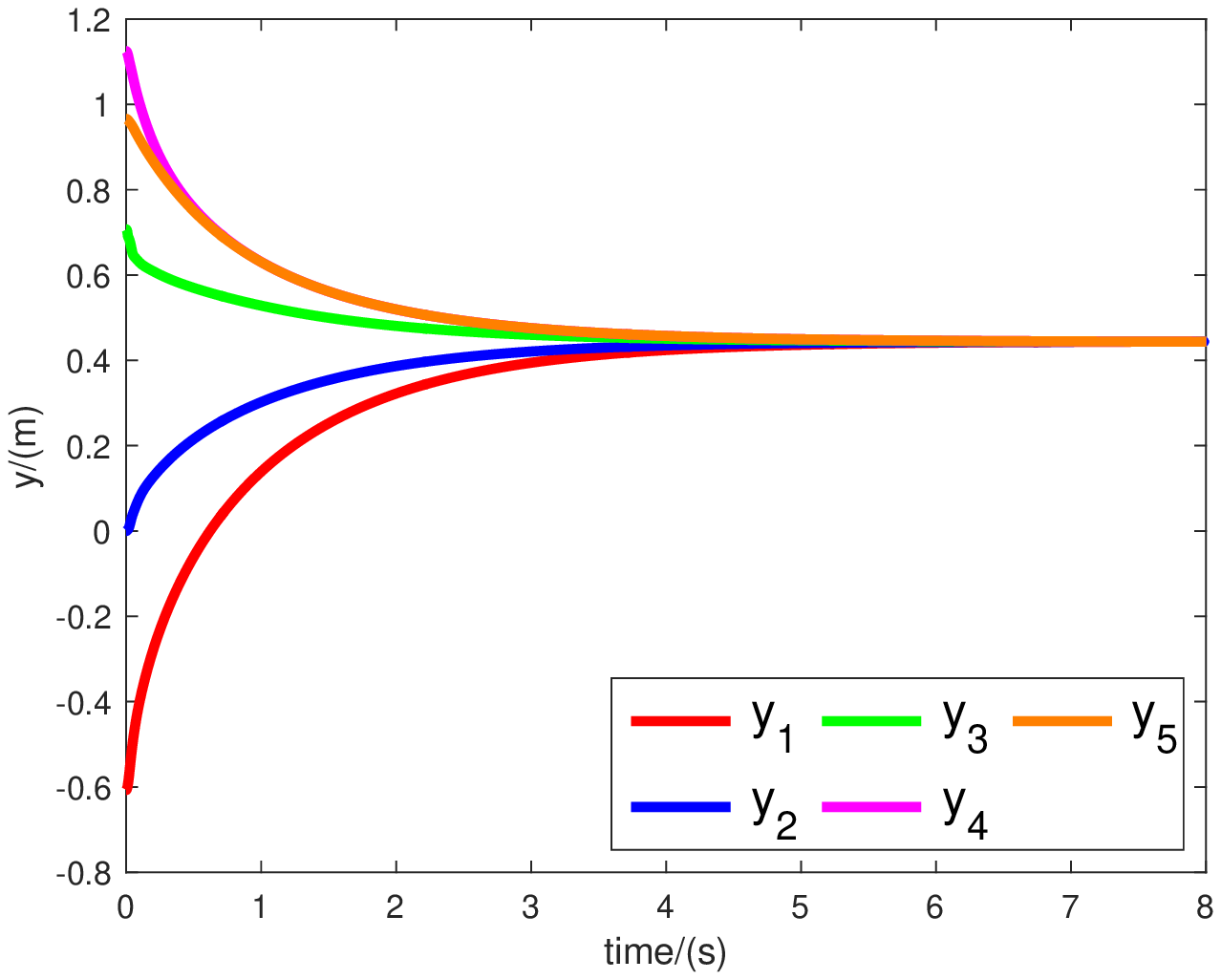}}
\caption{End effector positions of the $5$-robots MAS under the adaptive controller~\eqref{equ23}.}
\label{fig15}
\end{figure}

Assuming that, for all $5$ robots, the actuation is bounded by $\bar{\fbm{f}}_{i}=[30,10]^\mathsf{T}$~N, and the gravity terms are bounded by $\bm{\gamma}_{i}=[15,5]^\mathsf{T}$~N, the standard saturations in~\eqref{equ27} then have bounds $\bar{\fbm{f}}_{i}-\bm{\gamma}_{i}$. Selecting $k_{i}=3$ and $p_{i}=100$ leads to $\phi^{*}=0.25$. Letting $Q=0.04$ and $\rho=0.01$ guarantees that $V(0)<\Psi_{max}$ and $\Psi_{max}\leq\phi^{*}$. \figsa{fig16a}{fig16b} show that~\eqref{equ27} drives all agents to the same configuration. Because proxy damping gains are small, the agents move away from their final configuration initially. However, the MAS reaches consensus without breaking any edge eventually, see~\fig{fig17a}. 
\begin{figure}[!hbt]
\centering
\subfigure[End effector positions in task space along $x$-axis.]{
	\label{fig16a}
	\includegraphics[width=4cm,height=2.5cm]{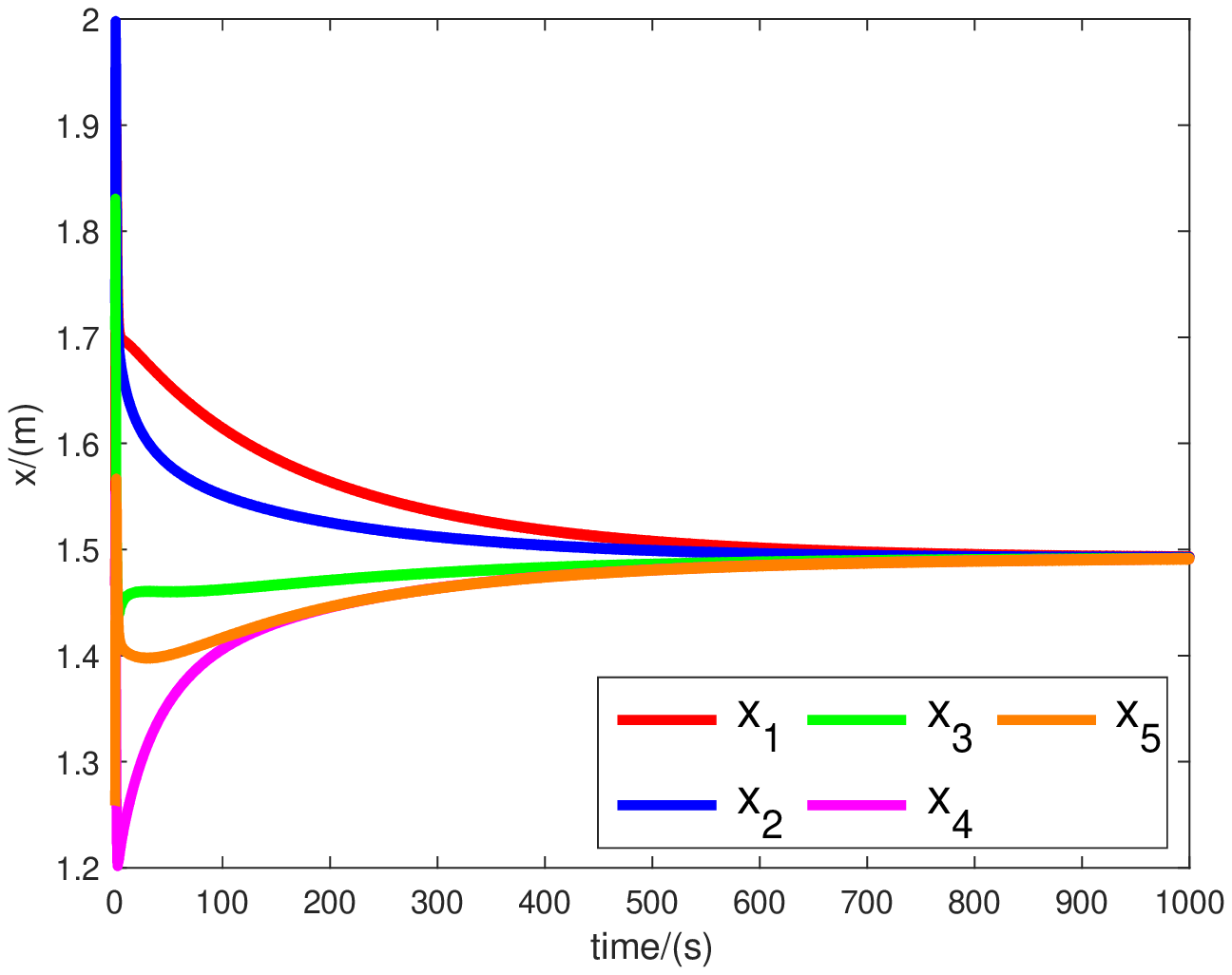}}
\subfigure[End effector positions in task space along $y$-axis.]{
	\label{fig16b}
	\includegraphics[width=4cm,height=2.5cm]{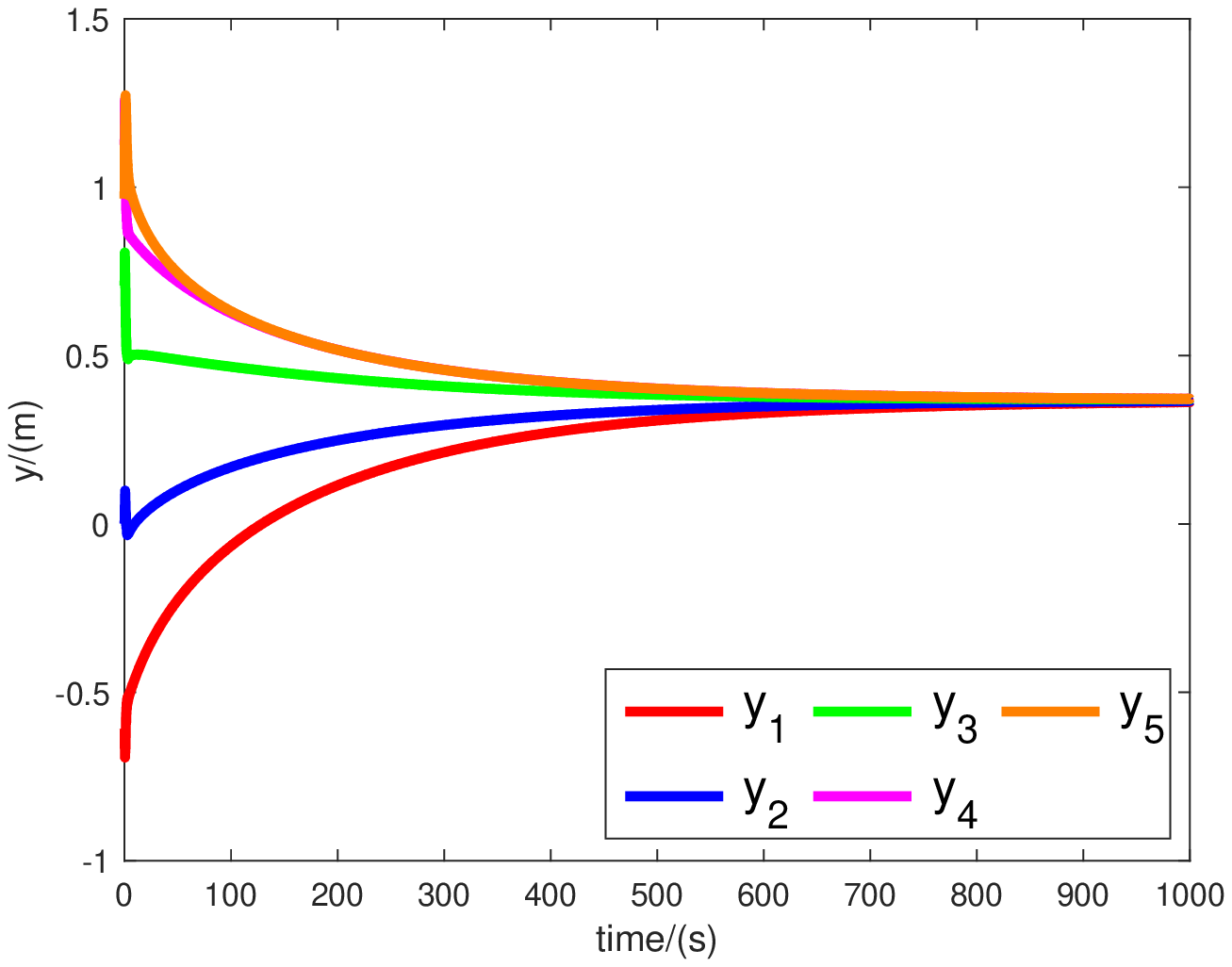}}
\caption{End effector positions of the $5$-robots MAS with bounded actuation under the control~\eqref{equ27}.}
\label{fig16}
\end{figure}

\begin{figure}[!hbt]
\centering
\subfigure[Coordination by~\eqref{equ27}.]{
	\label{fig17a}
	\includegraphics[width=4cm,height=3cm]{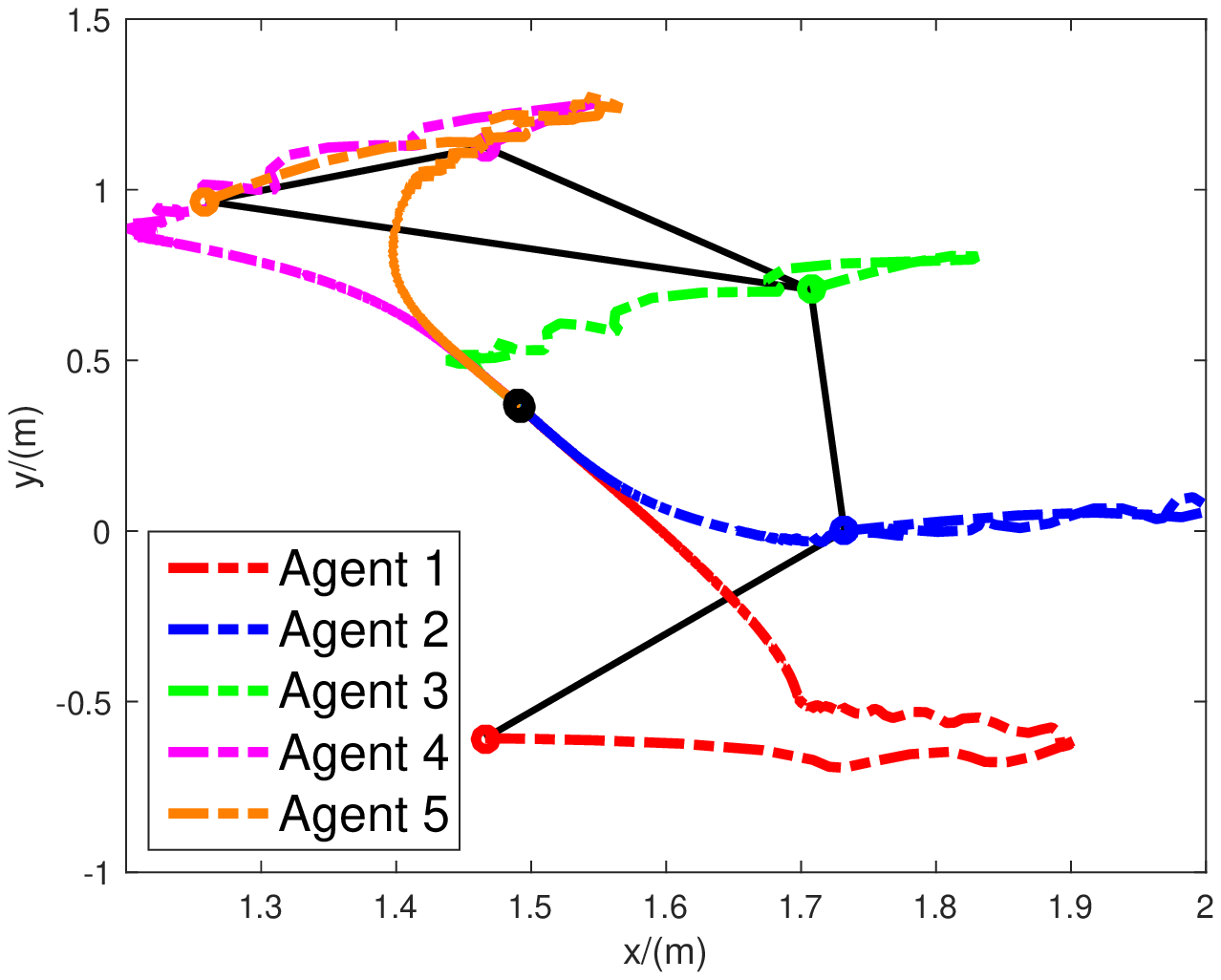}}
\subfigure[Coordination by~\eqref{equ32}.]{
	\label{fig17b}
	\includegraphics[width=4cm,height=3cm]{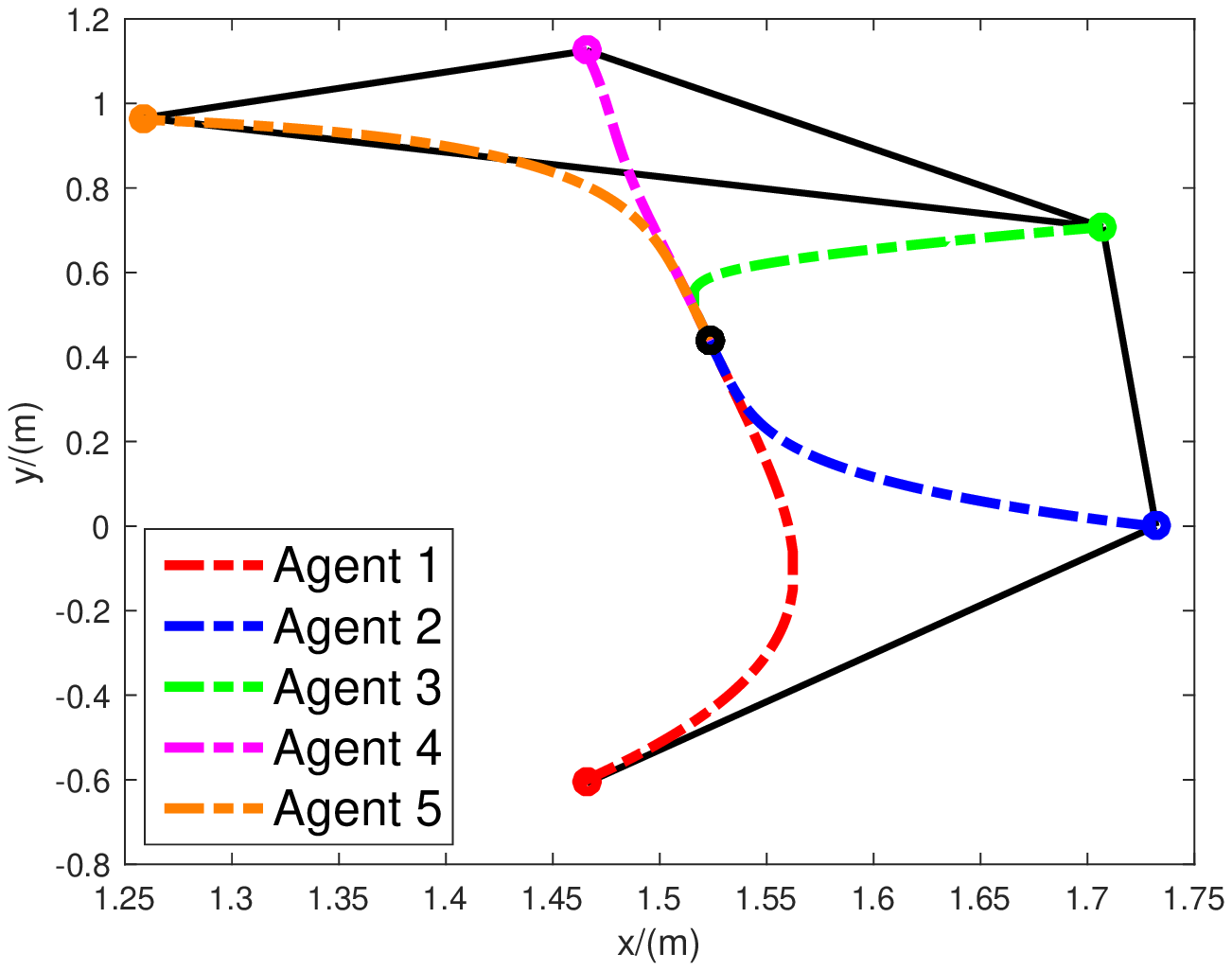}}
\caption{Coordination in task space of the $5$-robots MAS with limited actuation and controlled by~\eqref{equ27} and by~\eqref{equ32}.}
\label{fig17}
\end{figure}

In the last set of simulations, the time-varying communication delays are bounded by $\overline{T}_{ji}=0.1$~s. After selecting $\alpha=\mu_{i}=20$ and $p_{i}=10$, \lem{lem4} leads to $\phi^{*}=0.034$. Letting $Q=0.04$ and $\rho=0.001$ guarantees $\Psi(\|\fbm{q}(0)\|)<\phi^{*}$ and gives $\delta=0.015$. Then, $\beta_{i}=200$ guarantees $V(0)<\phi^{*}$. The injected virtual damping is selected $k_{i}=10$ to suppress the delay-induced distortions. \fig{fig17b} ilustrates that, despite actuation limits and system uncertainties, all robot end effectors converge to the same point in~$3\times 10^{4}$~s. Detailed coordination along the $x$- and $y$-axes is displayed in~\fig{fig18}.
\begin{figure}[!hbt]
\centering
\subfigure[End effector positions in task space along $x$-axis.]{
	\label{fig18a}
	\includegraphics[width=4cm,height=2.5cm]{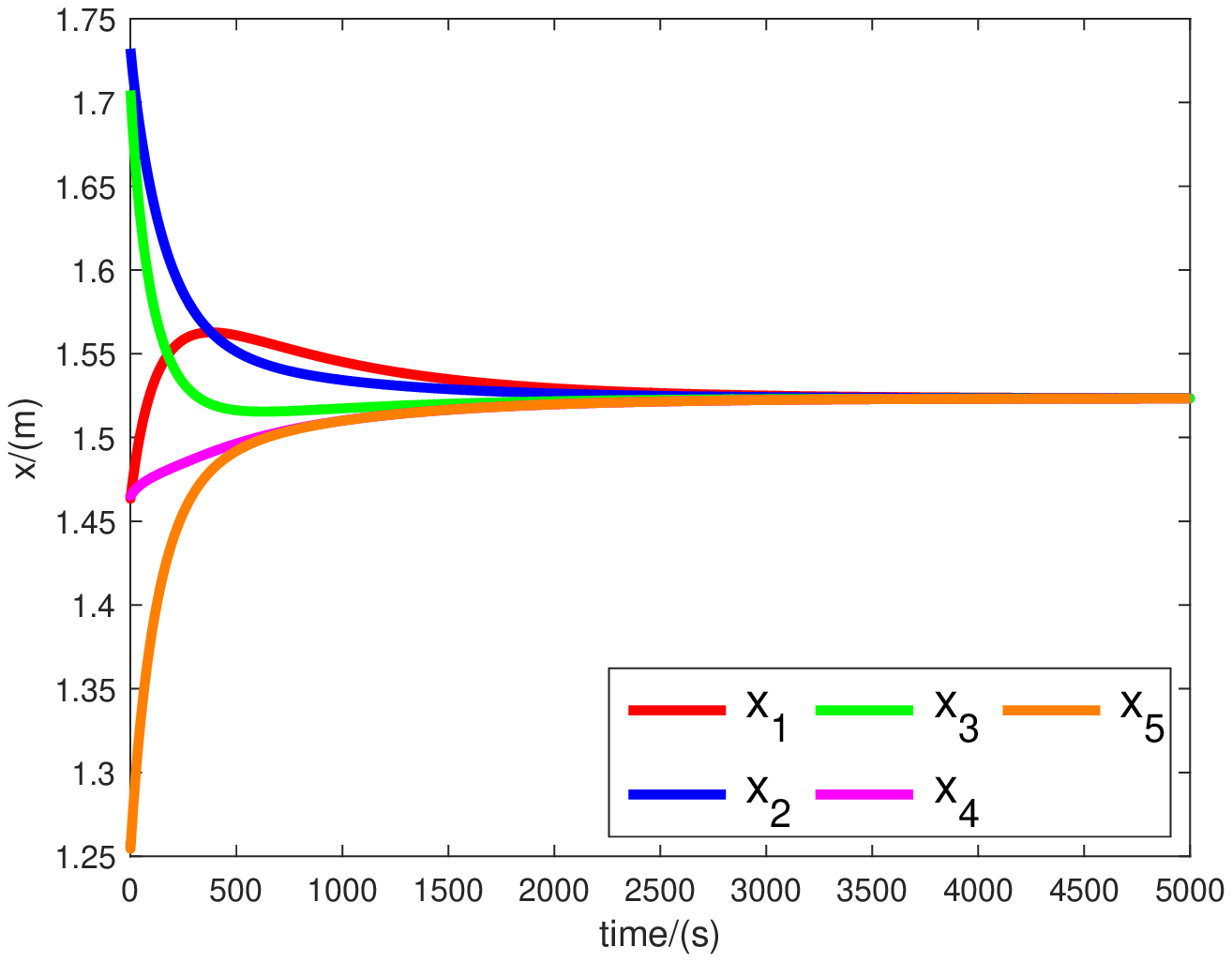}}
\subfigure[End effector positions in task space along $y$-axis.]{
	\label{fig18b}
	\includegraphics[width=4cm,height=2.5cm]{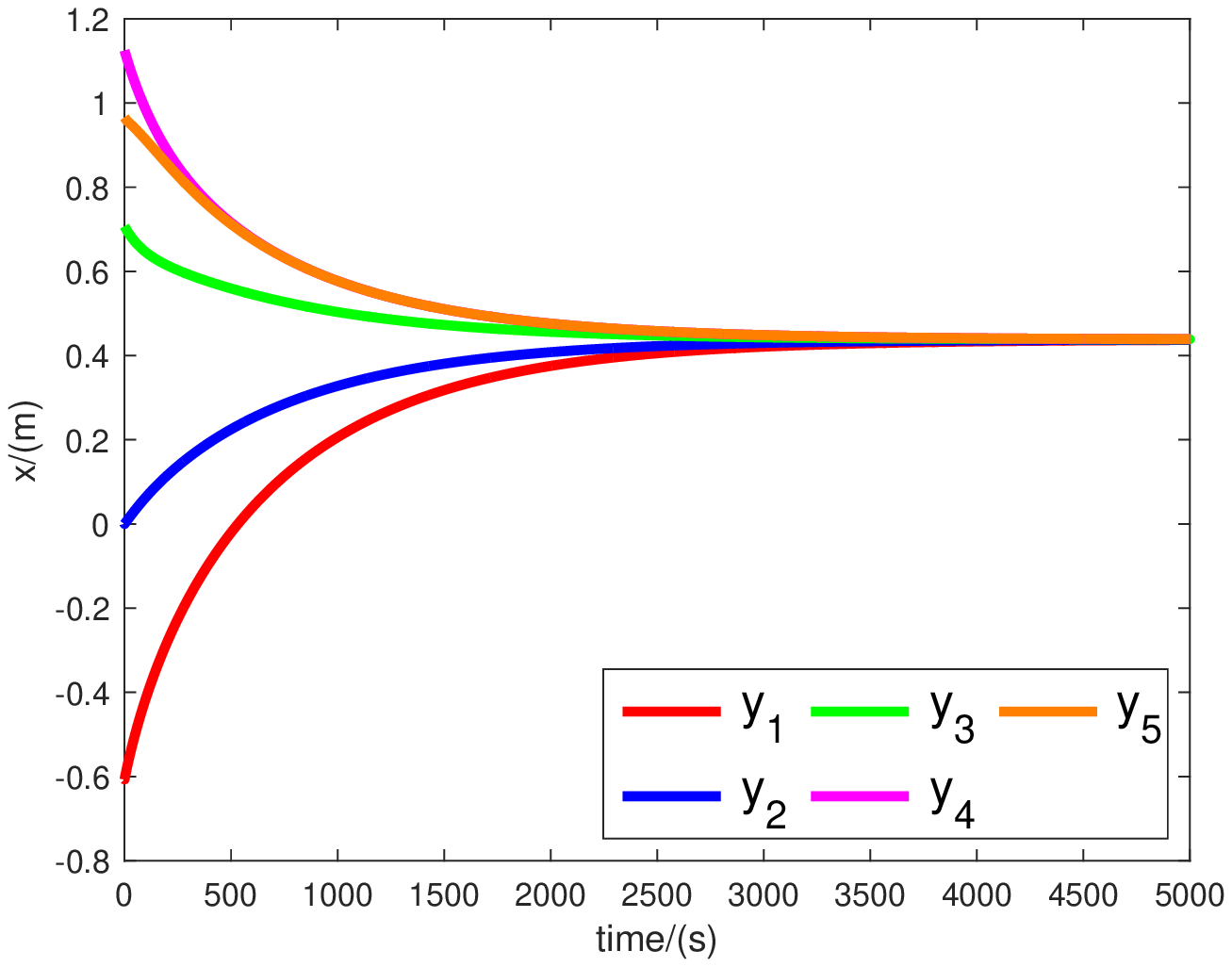}}
\caption{End effector positions of the uncertain $5$-robots MAS with bounded actuations and time-varying delays under the control~\eqref{equ32}.}
\label{fig18}
\end{figure}

\section{Conclusions}
This paper has explored the impact of actuation bounds on the synchronization with connectivity maintenance of kinematic and Euler-Lagrange multi-agent systems. Regarding actuator saturation as dynamic scaling of the control inputs has led to the conclusion that actuator saturation threatens neither the coordination nor the connectivity of kinematic multi-agent systems. Thus, conventional negative gradient-based controllers derived from generalized potential functions can achieve the connectivity-preserving consensus objective without modification. As a result, such controllers exploit the available actuation better and converge faster than controllers designed to account for the actuation bounds. For Euler-Lagrange multi-agent systems, the paper has shown that actuator saturation restricts the initial states from which synchronization with local connectivity preservation is achievable. For fully actuated Euler-Lagrange multi-agent systems, the paper has developed gradient-based controllers that can achieve consensus subject to connectivity maintenance with no velocity sensing and with parameter uncertainties, respectively. For Euler-Lagrange systems with bounded actuation, an indirect coupling control framework has decomposed the inter-agent couplings into agent-proxy couplings and inter-proxy couplings. This decomposition has led to a transformation of the actuation bounds into a bound on the potential function designed to preserve connectivity. Lastly, the framework has been extended to address the consensus of Euler-Lagrange multi-agent systems with parameter uncertainties, time-varying delays and connectivity preservation simultaneously.

\bibliography{bibi}
\end{document}